\documentclass[11pt]{article}

\usepackage[T1]{fontenc}
\usepackage{lmodern}
\usepackage{amsmath,amssymb,amsthm,mathtools}
\usepackage{booktabs,array}
\usepackage[margin=1.12in]{geometry}
\usepackage{microtype}
\usepackage[hidelinks,pdfusetitle]{hyperref}

\numberwithin{equation}{section}

\newtheorem{theorem}{Theorem}[section]
\newtheorem{lemma}[theorem]{Lemma}
\newtheorem{proposition}[theorem]{Proposition}
\newtheorem{corollary}[theorem]{Corollary}
\theoremstyle{remark}

\newcommand{\Z}{\mathbb Z}

\newcommand{\supp}{\operatorname{supp}}
\newcommand{\one}{\mathbf 1}
\newcommand{\cP}{\mathcal P}
\newcommand{\ip}[2]{\langle #1,#2\rangle}

\title{Positive Eigenvalues of Circulant Hadamard Square Roots}
\author{Wei Xie}
\date{7 September 2026}

\begin{document}

\maketitle

\begin{abstract}
Let $H$ be an entrywise positive real symmetric circulant matrix whose
entrywise square is positive definite.  We prove that $H$ has at least
six positive eigenvalues if its order is at least $16$, and at least
seven if its order is odd and at least $17$.  The first threshold is
sharp.  The bounds are attained in order $16$ and in orders $17,19,21$,
respectively.  We also determine the minimum number of positive
eigenvalues in every order from $5$ through $16$.
\end{abstract}

\medskip
\noindent\textbf{2020 Mathematics Subject Classification.}
15A18, 15B05.

\smallskip
\noindent\textbf{Keywords.}
Circulant matrix, Hadamard square root, inertia, finite Fourier transform.

\section{Introduction}

For a real symmetric matrix $A=(a_{ij})$ with positive entries, write
$A^{\circ t}=(a_{ij}^t)$ for its entrywise $t$th power, and let
$n_+(A)$ denote the number of its positive eigenvalues, counted with
multiplicity.
The Schur product theorem asserts that the entrywise product of two
positive semidefinite matrices is positive semidefinite.  The reverse
question for entrywise roots leads to an inertia problem: if $H$ has
positive entries and $H^{\circ2}$ is positive definite, how small can
$n_+(H)$ be?

For rank-one positive semidefinite Hermitian matrices with no zero or
negative real entries, Marcus and Sandy proved that the principal
entrywise square root has one more positive eigenvalue than negative
eigenvalues \cite[Theorem~7]{MarcusSandy}.
Reams gave a proof that a real symmetric matrix with zero diagonal,
positive off-diagonal entries and exactly one positive eigenvalue has
a nonsingular entrywise square root with exactly one positive eigenvalue
\cite[Theorem~2.9]{Reams}.  Garg and Aujla proved that, for a
real symmetric matrix $A$ with positive entries,
\[
 n_+(A)=1\quad\Longrightarrow\quad
 n_+(A^{\circ t})=1\qquad(0<t\le1).
\]
Under the same hypothesis, nonsingularity of $A$ also implies
nonsingularity of $A^{\circ t}$ for $0<t\le1$
\cite[Theorem~3.9]{GargAujla}.

FitzGerald and Horn proved that $A\mapsto A^{\circ t}$ preserves
positive semidefiniteness on entrywise nonnegative real symmetric
$n\times n$ matrices, for $n\ge2$ and $t>0$, exactly when $t$ is an integer
or $t\ge n-2$ \cite[Theorem~2.2]{FitzGeraldHorn}.
For transforms preserving full inertia, Belton,
Guillot, Khare, and Putinar proved a rigidity result: for any fixed
$k\ge0$, a function $f:\mathbb R\to\mathbb R$ preserves the inertia of
every real symmetric matrix with exactly $k$ negative eigenvalues, in
every dimension, if and only if $f(t)=ct$ for some $c>0$
\cite[Theorem~1.2]{BeltonEtAl}.

The circulant assumption turns the question into a finite Fourier
problem.  Positive definiteness of a circulant kernel is equivalent to
positivity of all its Fourier coefficients, while entrywise squaring
becomes convolution of the original Fourier spectrum.  Fourier
characterizations of strictly positive definite functions on compact
abelian groups were studied by Emonds and F\"uhr \cite{EmondsFuehr}.
Fawzi, Saunderson, and Parrilo gave conditions for nonnegative functions
on finite abelian groups to admit sums-of-squares representations with
controlled Fourier support \cite{FawziSaundersonParrilo}.  In the
present problem, pointwise positivity of the inverse transform is coupled
with strict positivity of the spectral self-convolution, and the quantity
to be minimized is the number of positive spectral coefficients.

We write $B\succ0$ for positive definiteness of a matrix $B$ and
$f>0$ for pointwise positivity of a real-valued function $f$.

For $n\ge1$, write
$\kappa_2(n)$ for the minimum of $n_+(H)$ over all
real symmetric circulant $n\times n$ matrices $H$ with positive entries
and $H^{\circ2}\succ0$.  We study this minimum in small orders and seek
the least order beyond which five positive eigenvalues are impossible.

\section{Fourier formulation and elementary bounds}

Let $G=C_n=\Z/n\Z$ and put $\omega=e^{2\pi i/n}$.  For a function
$h:G\to\mathbb C$, use the normalization
\[
 a_k=\widehat h(k)=\sum_{x\in G}h(x)\omega^{-kx},
 \qquad
 h(x)=\frac1n\sum_{k\in G}a_k\omega^{kx}.
\]
Convolution on $G$ is written
\[
 (a*b)_k=\sum_{j\in G}a_jb_{k-j}.
\]
If $H_h=(h(x-y))_{x,y\in G}$, then the eigenvalues of $H_h$ are
$a_k$, and
\[
 \widehat{h^2}=\frac1n(a*a).
\]
Consequently, for a positive real even function $h$,
\begin{equation}
 H_h^{\circ2}\succ0
 \quad\Longleftrightarrow\quad
 (a*a)_k>0\quad(k\in G),
 \label{eq:fourier-equivalence}
\end{equation}
and $n_+(H_h)=|\{k:a_k>0\}|$.  We may therefore write
\begin{equation}
 \kappa_2(n)=\min\bigl\{|\{k:a_k>0\}|:
 a_{-k}=a_k\in\mathbb R,\ \mathcal F^{-1}a>0,\ a*a>0\bigr\}.
 \label{eq:kappa-def}
\end{equation}
We call a real even function $a$ satisfying the two strict positivity
conditions in \eqref{eq:kappa-def} feasible, and call
$P=\{k:a_k>0\}$ its positive support.  Multiplication of the spectrum
by a positive constant preserves feasibility.  So does multiplication
of the frequency indices by a unit $u$ modulo $n$: if $b_k=a_{uk}$,
then $\mathcal F^{-1}b(x)=\mathcal F^{-1}a(u^{-1}x)$ and
$(b*b)_k=(a*a)_{uk}$.

\begin{theorem}\label{thm:main}
For every integer $n\ge16$,
\[
 \kappa_2(n)\ge6.
\]
For every odd integer $n\ge17$,
\[
 \kappa_2(n)\ge7.
\]
Both bounds are attained:
\[
 \kappa_2(16)=6,\qquad
 \kappa_2(17)=\kappa_2(19)=\kappa_2(21)=7.
\]
The initial order $16$ in the first assertion is best possible, since
$\kappa_2(15)=5$.
\end{theorem}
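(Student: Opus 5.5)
The plan is to split the theorem into two parts: lower bounds valid for all large $n$, and explicit constructions attaining them. Throughout I use the Fourier formulation \eqref{eq:kappa-def}. Write $P=\{k:a_k>0\}$ and $N=\{k:a_k<0\}$; both are symmetric under $k\mapsto -k$. The two positivity conditions then split the spectrum in complementary ways. The condition $h=\mathcal F^{-1}a>0$ forces the positive part $a^+$ to dominate $a^-$ after a Fourier transform. Indeed, $\sum_k a_k\omega^{kx}>0$ for all $x$ says that the trigonometric polynomial $p(x)=\sum_{k\in P}a_k\omega^{kx}$ exceeds $q(x)=\sum_{k\in N}|a_k|\omega^{kx}$ pointwise. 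Meanwhile $(a*a)_k>0$ must hold at every $k\in G$, and in particular at frequencies outside $P+P$.

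The first key step is a covering lemma. If $(a*a)_k>0$, then $k$ must lie in $P+P$ or in $N+N$, since otherwise every term $a_ja_{k-j}$ is $\le0$. So $G=(P+P)\cup(N+N)\cup(\text{pairs involving zeros})$. Moreover, $(a*a)_0=\sum a_j^2>0$ is automatic, but $h>0$ gives $\sum_{P}a_k>\sum_N|a_k|$ at $x=0$. Averaging $h>0$ against nonnegative test kernels supported on suitable frequency sets then yields further inequalities. A typical test kernel is a Fejér-type kernel $|\sum_{j\in S}\omega^{jx}|^2$, whose spectrum is $S-S$. This gives $\sum_{k}a_k\,r_S(k)>0$, where $r_S(k)$ counts representations $k=s-s'$. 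The idea is to show that if $|P|\le5$ (or $\le6$ for odd $n$), then the constraints $a*a>0$ at frequencies outside $P+P$ force large negative mass there. That negative mass would then violate such a test inequality. For small $|P|$, the set $P=\{0,\pm p,\pm q\}$ has $|P+P|\le 13$ (and only 9 in the even-symmetric structure up to units). Hence for $n\ge16$ many frequencies must be covered by $N+N$ or by mixed sums $a_ja_{k-j}$ with $j\in P$, $k-j\in P$.

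The second step is a finite case analysis over the possible positive supports. With $|P|\le5$ and symmetry, $P$ is $\{0,\pm p,\pm q\}$ or a subset of it; the case without $0$ requires $n$ even so that $n/2$ can be used. Using the unit action, I normalize $p=1$ when $\gcd(p,n)=1$, and otherwise treat the subgroup cases separately. For each shape I derive a contradiction for all $n\ge16$ uniformly. I would exhibit an explicit nonnegative combination: a quadratic certificate $\sum_k c_k (a*a)_k + \sum_x d_x h(x) \cdot(\ldots)$ which must be positive under feasibility but is identically $\le0$ given the sign pattern. The uniformity in $n$ should come from the fact that only the frequencies near $P+P+P$ matter, so beyond a bounded $n$ the configuration ``looks the same.'' Odd $n$ removes the frequency $n/2$, which is exactly why one more positive eigenvalue is needed. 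There, the analysis for $|P|=6$ is over $P=\{\pm p,\pm q,\pm r\}$, since $0\notin P$ is forced by parity.

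The main obstacle is producing these certificates uniformly in $n$, rather than by a separate computation for each $n$. I would first solve small $n$ (16 to about 40) by linear/semidefinite programming to guess dual certificates, then look for a pattern parametrized by $n$. Finally, the sharpness claims are constructive: I would exhibit explicit spectra $a$ for $n=15,16,17,19,21$ with the stated number of positive coefficients, found numerically and then rationalized, and verify $h>0$ and $a*a>0$ by exact finite checks. The equalities $\kappa_2(16)=6$ and $\kappa_2(15)=5$ also need the reverse inequality $\kappa_2(15)\ge5$, which I would take from the small-order determinations the paper promises for orders $5$ through $16$.
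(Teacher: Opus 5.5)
\textbf{There is a genuine gap in the lower bounds.} Your argument there is a search strategy (guess certificates numerically for $16\le n\le 40$ and hope for a pattern), not a proof. The claim meant to make it uniform in $n$ is false.

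After normalizing $P=\{0,\pm1,\pm q\}$, the frequency $q$ is an arbitrary residue. The shape of $P+P$ and of its complement changes with arithmetic coincidences such as $2q\equiv\pm1$, $3q\equiv\pm2$, or $q\equiv n/2\pm1\pmod n$. So no single configuration ``looks the same'' for all large $n$.

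The idea you are missing is the dichotomy of Theorem~\ref{thm:short-relation}.
\begin{itemize}
\item If $\alpha r+\beta s\not\equiv0\pmod n$ for every nonzero $(\alpha,\beta)$ with $|\alpha|+|\beta|\le6$, then $(u,v)\mapsto ur+vs$ is injective on the $\ell^1$-ball of radius $3$. Hence $|P+P|=13$, and every negative mass can be moved into $R=G\setminus(P+P)$ by positive modes of total weight at least $C+D$. This gives $\sum_R a*a\le M(M-2(C+D))$, which is nonpositive by the mass bound discussed below. That kills every such support at once.
\item Otherwise, a short relation, together with generation and quotient arguments (Lemmas~\ref{lem:basic} and \ref{lem:quotient}), normalizes $P$ to finitely many families. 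In even order these are $\cP_{12},\cP_{23},\cP_{14}$, three half-period families, and four small exceptions. In odd order they are $\cP_{12},\cP_{13},\cP_{14},\cP_{15},\cP_{23}$, plus $\cP_{1,10}$ at $n=25$.
\end{itemize}
Only for these fixed patterns does your uniformity heuristic become meaningful. Even then, several of them ($\cP_{14}$ in general, $\cP_{23}$ in $\Z/16\Z$) need delicate ad hoc inequalities, not just a complementary-sum count.

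\textbf{Your test inequalities cannot supply the needed bound on the negative mass.} Pairing $h>0$ with nonnegative kernels is equivalent to positive definiteness of $(a_{i-j})$, i.e.\ to $h>0$ itself. But $h>0$ alone lets $M$ be arbitrarily large relative to $C+D$ once $a_0$ is large; at $x=0$ you only get $M<a_0+2C+2D$. The paper gets $M<2(C+D)$, and in fact $M\le C+D+\min(C,D)$ (Lemma~\ref{lem:strong-mass}), from $h(0)>h(x)$ for $x\ne0$. That inequality uses $a*a>0$ through the $2\times2$ principal minors of $H^{\circ2}$.

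\textbf{The case $0\notin P$ does not exist.} Since $a_0=\sum_x h(x)>0$, always $0\in P$, so there is no ``case without $0$.'' In odd order $|P|$ is therefore automatically odd, and $\kappa_2(n)\ge7$ reduces to excluding five-point supports $\{0,\pm r,\pm s\}$. Your planned analysis of $P=\{\pm p,\pm q,\pm r\}$ is vacuous. In even order, the four-point shape you must treat is $\{0,n/2,\pm r\}$.

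\textbf{Upper bounds.} Your plan would work there but does more than needed. For $n=16,17,19,21$, an additive basis with $P+P=G$ plus a small perturbation (Lemma~\ref{lem:additive-basis}) suffices. Only $n=15$ needs genuinely negative coefficients, because $|P+P|\le13<15$. The bound $\kappa_2(15)\ge5$ is just Lemma~\ref{lem:three-positive}.
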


We record three elementary consequences that will be used repeatedly.

\begin{lemma}\label{lem:basic}
Let $a$ be feasible in \eqref{eq:kappa-def}, let
$P=\{k:a_k>0\}$, and put $h=\mathcal F^{-1}a$.  Then
\begin{enumerate}
\item $0\in P$ and $|a_k|<a_0$ for every $k\ne0$;
\item $h(0)>h(x)$ for every $x\ne0$;
\item $P$ generates $G$.
\end{enumerate}
\end{lemma}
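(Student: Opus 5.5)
Each part follows from Fourier inversion and the two strict positivity conditions in \eqref{eq:kappa-def}. Write $h=\mathcal F^{-1}a$, so $h$ is real, even and positive, and $a_k=\sum_x h(x)\omega^{-kx}$.

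For part (1), positivity of $h$ gives
\[
 a_0=\sum_{x}h(x)=\sum_x|h(x)|\ \ge\ \Bigl|\sum_x h(x)\omega^{-kx}\Bigr|=|a_k|.
\]
Equality for some $k\ne0$ would force $\omega^{-kx}$ to be constant on the support of $h$, which is all of $G$. That constant is $\omega^{0}=1$ (take $x=0$), so $\omega^{-k}=1$, i.e.\ $k=0$, a contradiction. Hence $|a_k|<a_0$ for $k\ne0$, and $a_0>0$ because $h>0$; in particular $0\in P$.

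For part (2), we use $a*a>0$. Since $n\,\widehat{h^2}=a*a$, we have $h(x)^2=\frac1{n^2}\sum_k(a*a)_k\omega^{kx}$ with all coefficients $(a*a)_k>0$. The same triangle-inequality argument gives $h(x)^2<\frac1{n^2}\sum_k(a*a)_k=h(0)^2$ for $x\ne0$: equality would require $\omega^{kx}=1$ for every $k$, hence $x=0$. As $h>0$, taking square roots gives $h(0)>h(x)$.

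For part (3), suppose $P$ lies in a proper subgroup $K<G$. Then the support of $a*a$ splits into two kinds of indices:
\begin{itemize}
\item $(a*a)_k$ with $k\notin K$ involves only products $a_ja_{k-j}$ in which $j$ and $k-j$ are not both in $K$. So at least one factor is $\le0$.
\item Such terms can be positive only if both factors are negative.
\end{itemize}
Together with $a_0$, this looks tight, so I would argue more cleanly by exploiting the quotient. Let $d=[G:K]$ and consider the average of $h$ over the cosets of the annihilator subgroup $K^\perp=\{x: kx\equiv0 \ \forall k\in K\}$, which is nontrivial. For $x\in K^\perp\setminus\{0\}$ we have $\omega^{kx}=1$ for $k\in K$, so
\[
 h(x)=\frac1n\sum_k a_k\omega^{kx}\le \frac1n\Bigl(\sum_{k\in P}a_k+\sum_{k\notin P}a_k\omega^{kx}\Bigr).
\]
This route is messy. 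Instead use part (2) in the form $h(0)-h(x)>0$:
\[
 n\bigl(h(0)-h(x)\bigr)=\sum_k a_k\bigl(1-\omega^{kx}\bigr)=\sum_k a_k\bigl(1-\cos(2\pi kx/n)\bigr).
\]
For $x\in K^\perp\setminus\{0\}$, every $k\in P\subseteq K$ has $1-\cos(2\pi kx/n)=0$. Every other $k$ has $a_k\le0$ and $1-\cos\ge0$. Hence $h(0)-h(x)\le0$, contradicting part (2). Therefore $P$ generates $G$.

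The only delicate point is recognizing that part (3) should be derived from part (2) via a nonzero element annihilated by $P$. For a proper subgroup $K=m\Z/n\Z$ with $m\mid n$ and $m>1$, the element $x=n/m\neq0$ satisfies $kx\equiv0\pmod n$ for all $k\in K$, so such an element always exists.
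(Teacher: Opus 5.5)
Your proof is correct and follows the paper's argument. Parts (1) and (3) are the same as the paper's: the strict triangle inequality, and then a nonzero $x$ annihilated by $\langle P\rangle$, which forces $\sum_k a_k(1-\cos(2\pi kx/n))\le0$. For part (2) you apply the strict triangle inequality to $h(x)^2=n^{-2}\sum_k(a*a)_k\omega^{kx}$, whereas the paper uses the $2\times2$ principal minor $h(0)^4-h(x)^4>0$ of $H_h^{\circ2}$; both are equivalent uses of $a*a>0$. In a final write-up, delete the two abandoned attempts at the start of part (3).
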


\begin{proof}
Since $h>0$,
$a_0=\sum_xh(x)>0$.  If $k\ne0$, strictness in the triangle
inequality gives
\[
 |a_k|=\left|\sum_xh(x)\omega^{-kx}\right|<\sum_xh(x)=a_0,
\]
because the character $x\mapsto\omega^{-kx}$ is nonconstant and every
$h(x)$ is positive.

By \eqref{eq:fourier-equivalence}, $h^2$ is strictly positive definite.
Its principal submatrix indexed by $0,x$ is positive definite, whence
$h(0)^4-h(x)^4>0$.  Positivity of $h$ yields $h(0)>h(x)$.

If $P$ failed to generate $G$, there would be a nonzero $x\in G$ such
that $\omega^{kx}=1$ for every $k\in P$.  Taking real parts in Fourier
inversion would then give
\[
 n\bigl(h(0)-h(x)\bigr)
 =\sum_{k\in G}a_k\bigl(1-\cos(2\pi kx/n)\bigr)\le0,
\]
because the summands from $P$ vanish and all remaining coefficients are
nonpositive.  This contradicts the preceding paragraph.
\end{proof}

Write $a=p-q$, where $p_k=\max(a_k,0)$ and $q_k=\max(-a_k,0)$,
and put $M=\sum_kq_k$.  We also write $p=a_+$ and $q=a_-$.
For a subset $R\subseteq G$, the identity
\[
 \sum_{k\in R}(p*q)_k
 =\sum_{j\in G}q_j\sum_{\substack{i\in P\\j+i\in R}}p_i
\]
allows us to bound the convolution sum by choosing, for each negative
position $j$, positive modes that translate it into $R$.
The following first lower bound is useful both directly and in quotient
groups.

\begin{lemma}\label{lem:three-positive}
For $n\ge6$, one has $\kappa_2(n)\ge4$.  If $n\ge7$ is odd, then
$\kappa_2(n)\ge5$.
\end{lemma}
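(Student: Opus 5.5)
Since $a$ is even, the positive support $P$ is symmetric under $k\mapsto -k$, and $0\in P$ by Lemma~\ref{lem:basic}(1). Hence $P$ consists of $0$, pairs $\{s,-s\}$ with $s\ne -s$, and possibly the involution $n/2$ when $n$ is even. The plan is first to reduce both assertions to excluding the single configuration $P=\{0,1,-1\}$, and then to rule that configuration out by summing $a*a$ over a suitable set $R$, using the identity stated just before the lemma.

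\emph{Reduction.} By Lemma~\ref{lem:basic}(3), $P$ generates $G$, so $P=\{0\}$ is impossible. For $n$ even, $P=\{0,n/2\}$ generates only a subgroup of order $2<n$, so it is also impossible. Thus $|P|\le 3$ forces $P=\{0,s,-s\}$ with $s\ne n/2$. Generation then forces $\gcd(s,n)=1$, and rescaling frequencies by the unit $s^{-1}$ preserves feasibility, so we may take $P=\{0,\pm1\}$. For $n$ odd, $|P|$ is odd, so $|P|\ge5$ is equivalent to excluding $|P|\in\{1,3\}$, which is the same case. It therefore suffices to show that no feasible $a$ with $P=\{0,\pm1\}$ exists for $n\ge6$.

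\emph{A bound on $a_1$.} Write $\theta_x=2\pi x/n$. By Fourier inversion,
\[
n\bigl(h(0)-h(x)\bigr)=2a_1(1-\cos\theta_x)-\sum_k q_k\bigl(1-\cos(k\theta_x)\bigr),
\]
and this is positive for every $x\ne0$ by Lemma~\ref{lem:basic}(2). Sum over all $x\ne0$. For each $k\ne0$ we have $\sum_{x}(1-\cos(k\theta_x))=n$, and $q$ vanishes at $0$. The sum therefore gives $2a_1n>Mn$, that is, $a_1>M/2$. Lemma~\ref{lem:basic}(1) also gives $a_0>a_1$.

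\emph{Contradiction.} Let $R=G\setminus\{0,\pm1,\pm2\}$. This set is nonempty and contains $\pm3$ because $n\ge6$. Since $p$ is supported on $\{0,\pm1\}$, the convolution $p*p$ vanishes on $R$, so
\[
\sum_{k\in R}(a*a)_k=\sum_{k\in R}(q*q)_k-2\sum_{k\in R}(p*q)_k .
\]
The first sum is at most $\sum_k(q*q)_k=M^2$. For the second, use the identity $\sum_{k\in R}(p*q)_k=\sum_j q_j\sum_{i\in P,\,j+i\in R}p_i$. Every $j$ with $q_j>0$ lies outside $\{0,\pm1\}$. If $j\in R$, the shift $i=0$ contributes $a_0\ge a_1$. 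If $j=\pm2$, the shift $i=\pm1$ lands on $\pm3\in R$ and contributes $a_1$. In both cases the inner sum is at least $a_1$, so $\sum_{k\in R}(p*q)_k\ge a_1M$. Combining these with $a_1>M/2$,
\[
\sum_{k\in R}(a*a)_k\le M^2-2a_1M=M(M-2a_1).
\]
If $M>0$, this is negative. If $M=0$, then $a*a=p*p$ vanishes on the nonempty set $R$. Either way, $a*a>0$ fails on $R$, which is the desired contradiction.

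The main obstacle is the lower bound $a_1>M/2$. It requires averaging the inequality $h(0)>h(x)$ over all $x$ rather than testing a single point, and the argument depends on the exact character-sum identity $\sum_x(1-\cos(k\theta_x))=n$ for every $k\ne0$.
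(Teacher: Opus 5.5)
Your proposal is correct and follows essentially the same route as the paper. Both arguments reduce to $P=\{0,\pm1\}$ and average $h(0)>h(x)$ over $G$ to get $M<2a_1$. Both then sum $a*a$ over $R=G\setminus\{0,\pm1,\pm2\}$, transporting negative mass in $R$ by the zero mode and mass at $\pm2$ by the modes $\pm1$, and settle the odd case by the parity of $|P|$.
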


\begin{proof}
If at most three coefficients are positive, Lemma~\ref{lem:basic} and
evenness reduce the positive support, after an automorphism, to
$P=\{0,\pm1\}$.  Write
\[
 p_0=A,\qquad p_{\pm1}=C.
\]
For $d_x=h(0)-h(x)$, summing over $G$ gives
\[
 0<\sum_xd_x=2C-M,
\]
so $M<2C$.  Let
\[
 R=G\setminus\{0,\pm1,\pm2\}.
\]
This set is nonempty for $n\ge6$, and $p*p$ vanishes on $R$.  Negative
mass already in $R$ is kept there by the zero mode, whose weight
$A$ is larger than $C$; negative mass at $\pm2$ is moved into $R$ by a
mode $\pm1$.  Hence
\[
 \sum_Rp*q\ge CM,
 \qquad
 \sum_Rq*q\le M^2.
\]
If $M>0$, then
\[
 \sum_Ra*a\le M(M-2C)<0;
\]
if $M=0$, then $a*a$ vanishes on $R$.  Both alternatives contradict
$a*a>0$.  Thus $\kappa_2(n)\ge4$.  In odd order every nonzero inversion
orbit has two elements, so the number of positive coefficients is odd.
The second assertion follows.
\end{proof}

We shall also use the following open perturbation observation.

\begin{lemma}\label{lem:additive-basis}
Let $P\subseteq G$ contain $0$, be closed under inversion, and satisfy
$P+P=G$.  Then $\kappa_2(n)\le|P|$.
\end{lemma}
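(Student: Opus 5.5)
We build an explicit feasible spectrum supported on $P$ as a small perturbation of the indicator of $P$. Concretely, set $a_k=1$ for $k\in P$ and $a_k=-\varepsilon$ for $k\notin P$, with $\varepsilon>0$ small; this $a$ is real and even because $P=-P$. We then check the two strict positivity conditions in \eqref{eq:kappa-def} and count positive coefficients.

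\emph{Convolution condition.} At $\varepsilon=0$ the spectrum is $\one_P$, and
\[
(\one_P*\one_P)_k=\bigl|\{(i,j)\in P\times P: i+j=k\}\bigr|.
\]
This is at least $1$ for every $k$, precisely because $P+P=G$. Since $a*a$ depends polynomially on $\varepsilon$, it stays strictly positive for all sufficiently small $\varepsilon$.

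\emph{Pointwise positivity.} This is the only step needing care, since $\mathcal F^{-1}\one_P$ need not be positive. We fix it by raising the zero mode, which is already in $P$. Take $a_0=T$ with $T$ large and keep the other values as above. Then
\[
h(x)=\frac1n\Bigl(T+\sum_{k\ne0}a_k\omega^{kx}\Bigr),
\]
and this is positive once $T>\sum_{k\ne0}|a_k|$; for instance $T=|P|+n$ suffices. Changing $a_0$ alters $a*a$ only in the terms involving index $0$:
\[
(a*a)_k=2a_0a_k+\sum_{i,j\ne0,\ i+j=k}a_ia_j \quad (k\ne0),
\qquad
(a*a)_0=a_0^2+\sum_{j\ne0}a_ja_{-j}.
\]

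We now recheck the convolution at $\varepsilon=0$ with the large $T$.
\begin{itemize}
\item For $k\in P\setminus\{0\}$, the term $2T\cdot1$ dominates and the remaining terms are nonnegative.
\item For $k=0$, we get $T^2$ plus nonnegative terms.
\item For $k\notin P$, we have $a_k=0$. Every representation $k=i+j$ with $i,j\in P$ has $i,j\ne0$, because $0\in P$ and $k\notin P$. Hence $(a*a)_k=(\one_P*\one_P)_k\ge1$.
\end{itemize}
So at $\varepsilon=0$ with $a_0=T$, the convolution $a*a$ is strictly positive and $h>0$. Both conditions are open in $\varepsilon$, so they persist for small $\varepsilon>0$; note that $T>\sum_{k\ne0}|a_k|$ still holds for small $\varepsilon$.

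Finally, the positive support of $a$ is exactly $P$, since the coefficients off $P$ equal $-\varepsilon<0$. Therefore $n_+(H)=|P|$ for the circulant $H$ with symbol $h$. Its entries $h(x)$ are real because $a$ is even, and positive by construction. Hence $\kappa_2(n)\le|P|$.
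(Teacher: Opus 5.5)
Your proof is correct and follows essentially the same route as the paper: a dominant zero mode with comparatively small positive weights on $P\setminus\{0\}$ (your $a_0=T$, $a_k=1$ is the paper's $a_0=1$, $a_k=\varepsilon$ after rescaling), positivity of $a*a$ from $P+P=G$, and a small negative perturbation off $P$ justified by openness of the strict inequalities. The paper treats that last perturbation as optional, since zero coefficients are not positive and already leave the positive support equal to $P$.
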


\begin{proof}
Set $a_0=1$, set $a_k=\varepsilon>0$ on $P\setminus\{0\}$, and set
$a_k=0$ elsewhere.  Choose $\varepsilon$ so that
$\varepsilon(|P|-1)<1$.  Then
\[
 n\mathcal F^{-1}a(x)\ge1-\varepsilon(|P|-1)>0.
\]
Since $P+P=G$, every coefficient of $a*a$ is positive, and the
positive support is exactly $P$.  The zero coefficients may also be changed
simultaneously to sufficiently small negative numbers.  All strict
inequalities persist and the positive support is unchanged.
\end{proof}

\section{Five positive Fourier coefficients}

Suppose in this section that $a$ is feasible and has exactly five positive
coefficients.  Lemma~\ref{lem:basic} gives
\begin{equation}
 P=\{0,\pm r,\pm s\},\qquad (r,s,n)=1.
 \label{eq:five-support}
\end{equation}
The five displayed elements are distinct.  For integer frequencies
$i,j$, we use the notation $\cP_{ij}=\{0,\pm i,\pm j\}$.
Write
\begin{equation}
 p_0=A,\qquad p_{\pm r}=C,\qquad p_{\pm s}=D,
 \qquad a=p-q,\qquad M=\sum_kq_k.
 \label{eq:five-masses}
\end{equation}

\begin{lemma}\label{lem:strong-mass}
Under \eqref{eq:five-support}--\eqref{eq:five-masses},
\begin{equation}
 M\le C+D+\min(C,D).
 \label{eq:strong-mass}
\end{equation}
\end{lemma}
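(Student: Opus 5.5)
The plan is to refine the averaging argument from the proof of Lemma~\ref{lem:three-positive}. Instead of summing $d_x=h(0)-h(x)$ uniformly over $G$, I will sum it against a nonnegative weight that partially suppresses one of the positive modes. By Lemma~\ref{lem:basic}(2), $d_x>0$ for $x\ne0$ and $d_0=0$. Hence for any nonnegative weight $w$ that is positive somewhere off $0$,
\[
 0<\sum_{x\in G}w_x\,n\,d_x=\sum_{k\in G}a_k\,c_k,\qquad
 c_k=\sum_{x\in G}w_x\bigl(1-\cos(2\pi kx/n)\bigr).
\]

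The weight I would take first is $w_x=1+\cos(2\pi rx/n)\ge0$, which is not identically zero off $0$. Orthogonality of characters gives, for $k\ne0$,
\[
 c_k=n-\tfrac n2\bigl([k=r]+[k=-r]\bigr),
\]
using $r\ne0$. Since the five elements of \eqref{eq:five-support} are distinct, $r\ne-r$, so $c_{\pm r}=n/2$. All other $k\ne0$ have $c_k=n$, and $c_0=0$.

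Splitting $a=p-q$ as in \eqref{eq:five-masses}, the positive part contributes $2\cdot C\cdot\frac n2+2\cdot D\cdot n=n(C+2D)$. The negative coefficients sit outside $P$, so none of them is at $0$ or $\pm r$; their contribution is therefore exactly $-nM$. Hence $0<n(C+2D-M)$, that is, $M<C+2D$.

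Repeating the argument with $w_x=1+\cos(2\pi sx/n)$ gives $M<2C+D$. Combining the two bounds yields $M<C+D+\min(C,D)$, which is even slightly stronger than \eqref{eq:strong-mass}.

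The only points needing care are the bookkeeping facts that make the coefficients come out cleanly. First, $\pm r$ are two distinct frequencies, so the cosine weight halves both of them and no $[2r=0]$ correction term appears. Second, $q$ vanishes on $P$, so the halved coefficients at $\pm r$ never multiply negative mass. Third, the weight is nonnegative and positive at some $x\ne0$; for instance $x=0$ is excluded only through $d_0=0$, and $w$ is not supported solely at $0$ when $n\ge5$. I expect no genuine obstacle beyond verifying these facts.
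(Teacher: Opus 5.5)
Your proof is correct and is essentially the paper's argument. Since $\sum_x d_x\cos(2\pi rx/n)=-a_r$ for $r\ne0$, your weighted sum $\sum_x\bigl(1+\cos(2\pi rx/n)\bigr)d_x$ equals $\Delta-a_r=C+2D-M$ with $\Delta=\sum_x d_x$, so your positivity statement is exactly the paper's bound $|a_r|\le\Delta$ (obtained there from $a_k=-\sum_x d_x\omega^{-kx}$ and $d\ge0$), written out through the coefficients $c_k$; the only difference is that you keep the strict inequality, which the paper does not need.
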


\begin{proof}
Let $d_x=h(0)-h(x)>0$ for $x\ne0$ and $d_0=0$.  Fourier inversion
gives
\[
 \Delta:=\sum_xd_x=2C+2D-M>0.
\]
For every nontrivial frequency $k$,
\[
 a_k=-\sum_xd_x\omega^{-kx},
\]
and hence $|a_k|\le\Delta$.  Applying this to $r$ and $s$ gives
$C,D\le\Delta$, or
\[
 M\le C+2D,\qquad M\le2C+D.
\]
Their minimum is \eqref{eq:strong-mass}.
\end{proof}

\begin{lemma}\label{lem:parity}
If $n\ge10$ is even, the two frequencies $r,s$ in
\eqref{eq:five-support} have opposite parity.
\end{lemma}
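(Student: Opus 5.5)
The plan is to rule out the case that $r$ and $s$ are both odd. If both were even, then $r$, $s$ and $n$ would share the factor $2$, contradicting $(r,s,n)=1$ in \eqref{eq:five-support}. So it suffices to assume $r,s$ odd and reach a contradiction. I would imitate the proof of Lemma~\ref{lem:three-positive}, but let the parity of frequencies play the role of the ``far'' set. Split the spectrum into even and odd frequencies, $a=e+o$. Under the assumption,
\[
 e=A\delta_0-q_{\mathrm{ev}},\qquad o=p_{\mathrm{odd}}-q_{\mathrm{odd}},
\]
where $p_{\mathrm{odd}}$ is supported on $\{\pm r,\pm s\}$. On odd positions only the cross term survives, so $(a*a)_k=2(e*o)_k$ there. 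Write $M_e$ and $M_o$ for the negative mass at even and odd frequencies, so $M=M_e+M_o$.

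Take $R$ to be the set of odd residues other than $\pm r,\pm s$. Since $n\ge10$ there are $n/2\ge5$ odd residues, so $R$ is nonempty. On $R$ the term $p*p$ vanishes, because $p*p$ meets odd positions only at $\pm r,\pm s$, through $A\cdot p$. Next I bound $\sum_R p*q$ from below using the identity preceding Lemma~\ref{lem:three-positive}:
\begin{itemize}
\item Odd negative positions $j$ avoid $\pm r,\pm s$, since $q=0$ on $P$. The zero mode therefore keeps them in $R$ with weight $A$.
\item For an even negative position $j$, the four translates $j\pm r$, $j\pm s$ are odd, and at most a few of them can hit $\{\pm r,\pm s\}$. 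So some mode of weight at least $\min(C,D)$ moves $j$ into $R$, and typically two such modes do.
\end{itemize}
This should give
\[
 \sum_R p*q\ \ge\ A M_o+\min(C,D)\,M_e .
\]
Meanwhile $q*q$ at odd positions is $2(q_{\mathrm{ev}}*q_{\mathrm{odd}})$, so $\sum_R q*q\le 2M_eM_o$. Positivity of $a*a$ on $R$ therefore forces
\[
 2M_eM_o\ >\ 2\bigl(AM_o+\min(C,D)M_e\bigr).
\]

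To contradict this I would use $M_e,M_o\le M\le C+D+\min(C,D)$ from Lemma~\ref{lem:strong-mass}, and $A>\max(C,D)$ from Lemma~\ref{lem:basic}. I would also use the relation coming from $h(0)>h(n/2)$:
\[
 n\bigl(h(0)-h(n/2)\bigr)=2\sum_{k\ \mathrm{odd}}a_k=2\bigl(2C+2D-M_o\bigr)>0 .
\]
The degenerate cases need separate treatment. If $M_e=0$ or $M_o=0$, the inequality fails directly once $R$ receives positive mass. If $M=0$, then $a*a$ vanishes on $R$, which is already a contradiction.

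The main obstacle is the counting step for even $j$. When $j$ is close to $\pm2r$, $\pm2s$ or $r\pm s$, several of its translates may fall on $\pm r,\pm s$, and the guaranteed weight may drop to just one mode. The weight $\min(C,D)$ may then be too weak against $2M_eM_o$ when $A$ is not much larger than $C$ and $D$.

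If that happens, I would supplement the argument with a reduction to the subgroup $2G$. Restricting $h$ to even $x$ gives a feasible spectrum on $C_{n/2}$ with coefficients $b_j\propto a_j+a_{j+n/2}$. This is feasible because principal submatrices of the positive definite matrix $H_h^{\circ2}$ stay positive definite. Its positive support lies in the image of $P$. Then Lemma~\ref{lem:three-positive} applies, for $n/2\ge6$ or for odd $n/2\ge7$: it forces the images of $\pm r,\pm s$ to be distinct and to carry dominant positive mass. That constraint, fed back into the inequality above, should close the gap, with the finitely many small even $n$ (such as $n=10,12$) checked by hand.
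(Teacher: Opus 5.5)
Your set-up matches the paper's: $R$ is the set of odd residues other than $\pm r,\pm s$, odd negative mass is kept in $R$ by the zero mode with weight $A$, and $\sum_R q*q\le 2M_eM_o$. But the proposal never reaches a contradiction, and the extra step you add is false as stated.

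An even negative position $j$ need not have any translate $j\pm r$, $j\pm s$ in $R$. This happens exactly when $j+\{\pm r,\pm s\}=\{\pm r,\pm s\}$. For $n\ge10$ that means $j=n/2$ and $s\equiv\pm(r+n/2)$ with $n/2$ even. For instance, take $P=\{0,\pm1,\pm7\}$ in $C_{16}$ and $j=8$. The four translates $9,7,15,1$ all lie in $\{\pm1,\pm7\}$, so $q_8$ gets weight $0$, not $\min(C,D)$, and your bound $\sum_R p*q\ge AM_o+\min(C,D)M_e$ fails. Your worry that only one mode might survive is harmless; the real danger is that none survives.

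The closing step is also never carried out. The relation you plan to use, $h(0)>h(n/2)$, only gives $M_o<2C+2D$, which says nothing about the sign of $M_e-A$. The fallback through $2G$ stops at ``should close the gap''.

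The missing idea is to use $h(n/2)>0$ itself rather than $h(0)>h(n/2)$. Let $E=A-M_e$ and $O=2C+2D-M_o$ be the sums of the even and odd coefficients. Then $nh(0)=E+O>0$ and $nh(n/2)=E-O>0$ together give $E>0$, that is, $M_e<A$. The zero-mode transport alone now gives $\sum_R a*a\le 2M_eM_o-2AM_o=2M_o(M_e-A)\le0$. This contradicts $a*a>0$ on the nonempty set $R$, and no transport of even mass is needed; this is the paper's proof.

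Your route can be repaired, but both missing pieces must be supplied.
\begin{enumerate}
\item In the exceptional configuration, the support reduces modulo $n/2$ to at most the three positions $\{0,\pm r\}$ in an even cyclic group of order at least $6$. This is excluded by Lemmas~\ref{lem:quotient} and \ref{lem:three-positive}.
\item In every other case, the inequality $M_o(M_e-A)>\min(C,D)M_e$ forces $M_o>0$ and $M_e>A>\max(C,D)$. Lemma~\ref{lem:strong-mass} then gives $M_e-A<2\min(C,D)-M_o$. Hence $M_o(M_e-A)<\min(C,D)^2<\min(C,D)M_e$, a contradiction.
\end{enumerate}
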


\begin{proof}
They cannot both be even because $P$ generates $G$.  Suppose they are
both odd.  Let $X$ and $Y$ be the total negative masses on the even and
odd frequencies.  The sums of the even and odd Fourier coefficients are
\[
 E=A-X,\qquad O=2C+2D-Y.
\]
Positivity of $h(0)$ and $h(n/2)$ yields $E+O>0$ and $E-O>0$, so
$X<A$.  Put
\[
 R_{\rm odd}=\{k:k\text{ odd}\}\setminus\{\pm r,\pm s\}.
\]
This set is nonempty when $n\ge10$, and $p*p$ vanishes there.  Since
\[
 \sum_{k\ \mathrm{odd}}(q*q)_k=2XY,
 \qquad
 \sum_{R_{\rm odd}}(p*q)_k\ge AY,
\]
we get
\[
 \sum_{R_{\rm odd}}(a*a)_k\le2Y(X-A)\le0,
\]
contrary to $a*a>0$.
\end{proof}

The next theorem is the additive core of the proof.

\begin{theorem}\label{thm:short-relation}
If \eqref{eq:five-support} is the positive support of a feasible
spectrum, then there is a nonzero pair $(\alpha,\beta)\in\Z^2$ such that
\begin{equation}
 \alpha r+\beta s\equiv0\pmod n,
 \qquad |\alpha|+|\beta|\le6.
 \label{eq:short-relation}
\end{equation}
\end{theorem}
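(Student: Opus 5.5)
The plan is to argue by contraposition. Assume that no nonzero $(\alpha,\beta)$ with $|\alpha|+|\beta|\le6$ satisfies $\alpha r+\beta s\equiv0\pmod n$, and reach a contradiction with $a*a>0$ by the same mass-transport device used in Lemmas~\ref{lem:three-positive} and~\ref{lem:parity}.

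For an element $\alpha r+\beta s$ of $G$, call $|\alpha|+|\beta|$ its formal length. The absence of short relations makes the formal length of each group element at most $3$ well defined: two representations of lengths $\ell,\ell'\le3$ would differ by a relation of length $\le6$. Consequently:
\begin{itemize}
\item the sumset $S=P+P=\{0,\pm r,\pm s,\pm2r,\pm2s,\pm r\pm s\}$ has exactly $13$ distinct elements;
\item every element of formal length $3$ lies outside $S$.
\end{itemize}
Put $R=G\setminus S$, so that $p*p$ vanishes on $R$. If $n\le13$, then $G$ cannot hold $13$ distinct elements of $S$ together with those of length $3$. So the assumption already forces $n$ large enough that $R\ne\emptyset$; when $n\le13$ a short relation exists outright.

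Next I locate the negative mass $q$. It vanishes on $P$, so it sits on the eight length-$2$ elements of $S$ and on $R$. I would then bound $\sum_R(p*q)$ from below using the displayed identity, choosing for each negative position $j$ the positive modes $i$ with $j+i\in R$.
\begin{itemize}
\item For $j=\pm(r+s)$ or $\pm(r-s)$, adding the appropriate one of $\pm r$ and one of $\pm s$ raises the formal length to $3$, so both translates lie in $R$. This gives weight at least $C+D$.
\item For $j=\pm2r$, the three modes $\pm r$ (in the outward direction) and $\pm s$ give weight $C+2D\ge C+D$. The case $j=\pm2s$ is symmetric.
\item For $j\in R$, the zero mode contributes $A$. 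I also want to show that at least one of $j+r,j-r$ lies in $R$, and likewise for $s$. If both $j\pm r$ lay in $S$, then $2r=x-y$ with $x,y\in S$. This is a relation of length $\le6$ unless it is a formal identity, and a formal identity would put $j$ at formal length $\le1$, i.e. in $S$, which is impossible. This gives weight at least $A+C+D$.
\end{itemize}
In all cases $\sum_R(p*q)\ge (C+D)M$. Together with $\sum_R(q*q)\le M^2$, this yields
\[
 \sum_R(a*a)\le M^2-2(C+D)M=M\bigl(M-2C-2D\bigr).
\]
By the proof of Lemma~\ref{lem:strong-mass}, $\Delta=2C+2D-M>0$. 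Hence the right-hand side is negative if $M>0$; if $M=0$, then $a*a$ vanishes on $R\ne\emptyset$. Either way this contradicts feasibility, and \eqref{eq:short-relation} follows.

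The main obstacle I expect is the bookkeeping in the case $j\in R$. The claim that some translate $j\pm r$ (and some $j\pm s$) escapes $S$ has to be checked carefully against degenerate formal identities, and elements of $R$ of formal length $3$ (or larger) behave differently from generic ones. If that claim fails in an edge case, the fallback is the zero mode alone. That needs $A\ge C+D$, which is not available from Lemma~\ref{lem:basic}. In that case I would instead split $M$ into its parts on $S$ and on $R$ and use the sharper bound \eqref{eq:strong-mass} on the $S$-part.
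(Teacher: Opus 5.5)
Your proposal is correct and follows the paper's proof essentially step for step: injectivity of $(u,v)\mapsto ur+vs$ on $\{|u|+|v|\le3\}$, two outward transports of total weight at least $C+D$ from each length-two point of $P+P$, the midpoint argument showing that some $j\pm r$ escapes $P+P$ when $j\in R$, and the final comparison with $M<2(C+D)$. The only difference is at $j\in R$: you use both an $r$-escape and an $s$-escape (weight $A+C+D$, so $A>D$ is not needed), while the paper uses the zero mode and one $r$-mode together with $A>D$; the fallback you worry about in your last paragraph is therefore never required.
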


\begin{proof}
For $j\ge0$, let
\[
 B_j=\{(u,v)\in\Z^2:|u|+|v|\le j\},
 \qquad \phi(u,v)=ur+vs.
\]
Suppose that \eqref{eq:short-relation} fails.  Since
$B_3-B_3=B_6$, the map $\phi$ is injective on $B_3$.  Thus
\[
 T=P+P=\phi(B_2),\qquad R=G\setminus T
\]
has $|T|=13$, while $|G|\ge|B_3|=25$; in particular, $R$ is nonempty.

If $k\in T\setminus P$, write uniquely
$k=\phi(u,v)$ with $|u|+|v|=2$.  There are signs
$\varepsilon,\eta\in\{\pm1\}$ for which
\[
 |u+\varepsilon|+|v|=3,\qquad
 |u|+|v+\eta|=3.
\]
Injectivity on $B_3$ implies
\begin{equation}
 k+\varepsilon r\in R,\qquad k+\eta s\in R.
 \label{eq:boundary-transports}
\end{equation}
If $k\in R$, the two points $k+r,k-r$ cannot both lie in $T$.  Indeed,
if $k\pm r=\phi(u_\pm)$ with $u_\pm\in B_2$, then
\[
 \phi\bigl(u_+-u_--(2,0)\bigr)=0.
\]
The vector in parentheses belongs to $B_6$ and must vanish.  Write the
common midpoint $u_+-(1,0)=u_-+(1,0)$ as $(v_1,v_2)$.  Since both
endpoints lie in $B_2$,
\[
 |v_1|+|v_2|+1
 =\max\{|v_1+1|+|v_2|,\ |v_1-1|+|v_2|\}\le2.
\]
The midpoint therefore belongs to $B_1$, forcing $k\in P$.
Thus at least one of $k+r,k-r$ lies in $R$.

For negative mass at $k\in T\setminus P$, the two transports in
\eqref{eq:boundary-transports} contribute weights $C$ and $D$ to $R$.
For $k\in R$, the zero mode and one $r$-mode contribute weight at least
$A+C>C+D$, since $A>D$.  Hence
\[
 \sum_Rp*q\ge(C+D)M,
 \qquad \sum_Rq*q\le M^2,
 \qquad p*p|_R=0.
\]
Since $M<2(C+D)$, the sum of $a*a$ over $R$ is negative when $M>0$;
when $M=0$, it is zero.  Both conclusions are impossible.
\end{proof}

We will repeatedly pass to quotients.

For a finite abelian group $\Gamma$, let $\widehat\Gamma$ be its group
of characters, that is, homomorphisms
$\chi:\Gamma\to\{z\in\mathbb C:|z|=1\}$, and set
\[
 \mathcal F^{-1}a(\chi)=\frac1{|\Gamma|}\sum_{y\in\Gamma}a_y\chi(y).
\]
An inversion-invariant real function $a$ on $\Gamma$ is called feasible
if this inverse transform and $a*a$ are both strictly positive.

\begin{lemma}\label{lem:quotient}
Let $\Gamma$ be a finite abelian group, let $K\le\Gamma$, and let
$\pi:\Gamma\to\Gamma/K$.  If $a$ is feasible on $\Gamma$, put
$P=\{y:a_y>0\}$ and define
\[
 b_{\bar x}=\sum_{\pi(y)=\bar x}a_y.
\]
Then $b$ is feasible on $\Gamma/K$ and
\begin{equation}
 \{\bar x:b_{\bar x}>0\}\subseteq\pi(P).
 \label{eq:quotient-support}
\end{equation}
\end{lemma}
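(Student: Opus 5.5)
The plan is to check the three requirements for $b$ one at a time: it is real and inversion-invariant, its inverse transform is strictly positive, and $b*b$ is strictly positive. After that, the support inclusion \eqref{eq:quotient-support} should be immediate.

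Inversion invariance is direct. Since $\pi(-y)=-\pi(y)$, the map $y\mapsto -y$ is a bijection from the fibre over $\bar x$ to the fibre over $-\bar x$. Hence $b_{-\bar x}=b_{\bar x}$, and $b$ is real because $a$ is.

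For the inverse transform, I identify the characters of $\Gamma/K$ with the characters $\chi$ of $\Gamma$ that are trivial on $K$; such a $\chi$ is constant on each fibre of $\pi$. Regrouping the sum defining $\mathcal F^{-1}a(\chi)$ by fibres gives
\[
 \mathcal F^{-1}b(\bar\chi)=\frac1{|\Gamma/K|}\sum_{\bar x}b_{\bar x}\bar\chi(\bar x)
 =\frac{|\Gamma|}{|\Gamma/K|}\,\mathcal F^{-1}a(\chi)=|K|\,\mathcal F^{-1}a(\chi)>0 .
\]
Thus $\mathcal F^{-1}b$ is the restriction of $\mathcal F^{-1}a$ to the annihilator of $K$, up to the positive factor $|K|$.

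For the convolution, pushforward along a homomorphism commutes with convolution. Concretely,
\[
 (b*b)_{\bar z}=\sum_{\bar x}b_{\bar x}b_{\bar z-\bar x}
 =\sum_{\pi(y)+\pi(w)=\bar z}a_ya_w
 =\sum_{\pi(t)=\bar z}(a*a)_t .
\]
Each summand $(a*a)_t$ is positive, and every fibre is nonempty, so $(b*b)_{\bar z}>0$.

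Finally, for the support inclusion: if $b_{\bar x}>0$, then some term $a_y$ with $\pi(y)=\bar x$ must be positive, so $y\in P$ and $\bar x\in\pi(P)$.

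The only step needing care is the identification of $\widehat{\Gamma/K}$ with the annihilator $K^\perp\subseteq\widehat\Gamma$ and the resulting bookkeeping of the normalisation constant. Every other step is a regrouping of finite sums, so I expect no real obstacle.
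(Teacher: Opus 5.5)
Your proof is correct and matches the paper's argument step for step: $\mathcal F^{-1}b(\psi)=|K|\,\mathcal F^{-1}a(\psi\circ\pi)$, $b*b$ is the fibrewise sum of $a*a$, and a fibre that misses $P$ has only nonpositive coefficients. The step you flagged as delicate needs only the easy direction of $\widehat{\Gamma/K}\cong K^\perp$, namely that $\psi\circ\pi$ is a character of $\Gamma$ for each character $\psi$ of $\Gamma/K$; this is exactly how the paper phrases it.
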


\begin{proof}
The function $b$ is real and inversion-invariant.  For every character
$\psi$ of $\Gamma/K$, the composite $\psi\circ\pi$ is a character of
$\Gamma$, and
\[
 \mathcal F^{-1}b(\psi)
 =\frac1{|\Gamma/K|}\sum_{y\in\Gamma}a_y\psi(\pi(y))
 =|K|\mathcal F^{-1}a(\psi\circ\pi)>0.
\]
Moreover,
\[
 (b*b)_{\bar x}=\sum_{\pi(y)=\bar x}(a*a)_y>0.
\]
If a fibre does not meet $P$, all its $a$-coefficients are nonpositive,
which proves \eqref{eq:quotient-support}.
\end{proof}

In even order, the short relation reduces the positive support to six
families and four small-order exceptions.

\begin{proposition}\label{prop:six-families}
Let $n=2m\ge16$.  If a feasible spectrum has five positive
coefficients, then its positive support is, up to an automorphism of
$G$, one of the six sets
\begin{equation}
\begin{gathered}
 \cP_{12}=\{0,\pm1,\pm2\},\qquad
 \cP_{23}=\{0,\pm2,\pm3\},\qquad
 \cP_{14}=\{0,\pm1,\pm4\},\\
 \{0,\pm1,m\pm1\},\qquad
 \{0,\pm2,m\pm4\},\qquad
 \{0,\pm1,m\pm2\},
\end{gathered}
\label{eq:six-families}
\end{equation}
or a set $P$ specified by one of the four exceptions
\begin{equation}
 (n,P)=(20,\cP_{25}),\quad(20,\cP_{45}),\quad
       (24,\cP_{34}),\quad(30,\cP_{25}).
 \label{eq:exceptions}
\end{equation}
In the fourth and fifth families of \eqref{eq:six-families}, $m$ is odd;
in the last family $m$ is even.
\end{proposition}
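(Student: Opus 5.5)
The plan is to combine Theorem~\ref{thm:short-relation} with Lemma~\ref{lem:parity} and the generation condition of Lemma~\ref{lem:basic}, and then normalize by units of $\Z/n\Z$.

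\textbf{Setting up the relation.} Let $n=2m\ge16$ and $P=\{0,\pm r,\pm s\}$. Then $(r,s,n)=1$, and by Lemma~\ref{lem:parity} $r$ and $s$ have opposite parity. Theorem~\ref{thm:short-relation} gives a nonzero $(\alpha,\beta)$ with $\alpha r+\beta s\equiv0$ and $|\alpha|+|\beta|\le6$. Choose such a pair with $|\alpha|+|\beta|$ minimal. Using the symmetries $r\leftrightarrow s$, $r\mapsto -r$, $s\mapsto -s$, we may assume $0\le\alpha\le\beta$, with $\beta\ge1$. This leaves a finite list of pairs $(\alpha,\beta)$ to examine.

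\textbf{Reducing to a normal form.} For each pair, let $g=\gcd(\alpha,\beta)$ and use the generation condition. Write $\alpha r+\beta s=tn$.
\begin{itemize}
\item If $\alpha=0$, then $\beta s\equiv0$. Generation forces $\gcd(s,n)$ to be small, so $s$ has order dividing $\beta\le6$. Hence $s\in\{m,\ n/3,\ n/4,\ n/5,\ n/6,\dots\}$ up to multiples, and $r$ is coprime to $n/\gcd$ appropriately.
\item If $\alpha\ge1$ and one of $r,s$ is a unit, we normalize it to $1$. For instance, $r=1$ gives $s\equiv -\alpha\beta^{-1}$, and for $\beta=1$ this yields $s\in\{1,\dots,5\}$ up to sign.
\item When neither $r$ nor $s$ is a unit, their gcds with $n$ are coprime. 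The relation then forces each to have small order or a small multiple structure, which produces the families involving $m\pm1$, $m\pm2$, $m\pm4$.
\end{itemize}
After this normalization we obtain a finite collection of candidates:
\begin{itemize}
\item supports of the form $\cP_{1j}$ with $j\le5$;
\item supports of the form $\cP_{ij}$ with $i,j$ small and coprime to $n$;
\item supports containing $m\pm c$ with $c$ small.
\end{itemize}
Opposite parity kills many of these outright, for example $\cP_{13}$ and $\cP_{15}$. The parity of $m$ in the last three families follows from the same constraint: $m\pm1$ must be even, so $m$ is odd; and $m\pm2$ must be odd, so $m$ is even. In the family $\{0,\pm2,m\pm4\}$, generation needs $m$ odd.

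\textbf{Eliminating the remaining candidates.} The main obstacle is discarding every candidate not on the list, such as $\cP_{25}$, $\cP_{34}$ and $\cP_{45}$ in general $n$, and supports involving $n/3$ or $n/4$. Here I would use Lemma~\ref{lem:quotient}: pass to a quotient $C_n\to C_d$ for a suitable small $d$ dividing $n$. The image of $P$ then has at most three positive points, and Lemma~\ref{lem:three-positive} contradicts this as soon as $d\ge6$, or $d\ge7$ odd with four images. When no suitable quotient exists, I would rerun the transport argument from the proof of Theorem~\ref{thm:short-relation} with $B_3$ replaced by the actual sumset structure. That argument fails only for small $n$, where extra coincidences occur, and this is exactly where the exceptions $(20,\cP_{25})$, $(20,\cP_{45})$, $(24,\cP_{34})$ and $(30,\cP_{25})$ survive. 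Checking that these exceptions are the only survivors is the delicate bookkeeping step. I would organize it by the minimal relation length (at most $4$, $5$, or $6$) and treat relation length $6$ last, since it yields the exceptional orders $n\mid$ small combinations such as $2\cdot5+\dots$.
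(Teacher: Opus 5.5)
Your plan has the right ingredients, but it does not carry out the classification. The step you call the main obstacle also rests on a misconception. No further transport argument is needed: the proposition is a purely arithmetic consequence of Theorem~\ref{thm:short-relation}, Lemma~\ref{lem:parity}, and one quotient bound.

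The first missing idea concerns relations with $\alpha\beta\ne0$. With $r$ odd and $s$ even, reducing $\alpha r+\beta s\equiv0$ modulo $2$ forces $\alpha$ to be even. So only the absolute-value pairs $(2,1),(2,3),(4,1),(2,2),(2,4),(4,2)$ occur. For the three primitive pairs, pick $x,y$ with $xr+ys\equiv1\pmod n$ and set $\delta=\alpha y-\beta x$. Then $\delta r\equiv-\beta$ and $\delta s\equiv\alpha$. A prime dividing both $\delta$ and $n$ would divide both $\alpha$ and $\beta$, so $\delta$ is a unit carrying $P$ to $\{0,\pm\alpha,\pm\beta\}$, i.e.\ to $\cP_{12},\cP_{23},\cP_{14}$. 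Your split according to whether $r$ or $s$ is a unit cannot replace this. For $\cP_{23}$ with $6\mid n$, neither frequency is a unit, yet the support is a primitive family, not one of the $m\pm c$ families as you assert. For the nonprimitive pairs, halving the relation leaves either $0$ or the element $m$ of order two. Parity then decides between $\cP_{12}$ and the three $m$-families and fixes the parity of $m$, followed by a unit normalization. Your remark that ``$m\pm2$ must be odd'' is garbled: opposite parity to $1$ requires $m\pm2$ to be even. Consequently every support with such a relation is already carried by a unit into the list. No candidate like $\cP_{25},\cP_{34},\cP_{45}$ ever needs to be eliminated for general $n$.

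The second missing idea governs the case $\alpha\beta=0$, which is where the exceptions come from. For each $t\in\{r,s\}$, quotienting by $\langle t\rangle$ gives a cyclic group of order $\gcd(t,n)$. There the quotient spectrum has at most three positive positions, namely $0$ and $\pm$ the image of the other frequency. Lemmas~\ref{lem:three-positive} and~\ref{lem:quotient} therefore give $\gcd(t,n)\le5$. Generation alone does not give this bound, since $\{0,\pm1,\pm n/4\}$ generates. If the short relation is $\beta t\equiv0$, the order $q$ of $t$ lies in $\{3,4,5,6\}$, because distinctness of the five support points excludes $q\le2$. Hence $n=q\gcd(t,n)\le30$. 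For even $n\ge16$ this leaves only $(n,q)=(16,4),(18,6),(20,4),(20,5),(24,6),(30,6)$. Normalizing these by units yields supports from the first three families, $\{0,\pm1,\pm8\}$ in $C_{20}$ (the last family with $m=10$), and $\cP_{56}$ in $C_{30}$ (removed again by the gcd bound). It also yields precisely the four listed exceptions.

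So the exceptions are not survivors of a failed transport argument at relation length $6$. In $C_{20}$, for instance, they come from the length-$4$ relation $4\cdot5\equiv0$. The open-ended ``bookkeeping'' you defer is really this short finite table. As written, your proposal neither derives the six families nor justifies that the four exceptions are the only additional cases.
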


\begin{proof}
By Lemma~\ref{lem:parity}, interchange $r,s$ so that $r$ is odd and
$s$ is even.  If $t\in\{r,s\}$ satisfies $\gcd(t,n)\ge6$, quotienting
by $\langle t\rangle$ gives a cyclic group of order $\gcd(t,n)\ge6$
whose feasible quotient spectrum has at most three positive positions.
This contradicts Lemmas~\ref{lem:three-positive} and
\ref{lem:quotient}.  Thus
\[
 \gcd(r,n)\le5,\qquad \gcd(s,n)\le5.
\]

Take a short relation from Theorem~\ref{thm:short-relation}.
Reduction modulo $2$ gives $\alpha$ even.  First suppose
$\alpha\beta\ne0$.  The possible pairs of absolute values are
\[
 (2,1),(2,2),(2,3),(2,4),(4,1),(4,2).
\]

For the primitive pairs $(2,1),(2,3),(4,1)$, choose $x,y$ such that
$xr+ys\equiv1\pmod n$ and set $\delta=\alpha y-\beta x$.  Then
\[
 \delta r\equiv-\beta,\qquad \delta s\equiv\alpha\pmod n.
\]
Any common divisor of $\delta$ and $n$ would divide both $\alpha$ and
$\beta$; hence $\delta$ is a unit.  This yields the first three supports
in \eqref{eq:six-families}.

For the nonprimitive pairs, absorb the signs into $r$ and $s$.  Dividing
the relation by $2$ leaves either $0$ or the unique element $m$ of order
two.  For $(|\alpha|,|\beta|)=(2,2)$, parity rules out the zero
alternative, and hence
\[
 r\pm s\equiv m\pmod{2m}.
\]
It follows that $m$ is odd.  Moreover $(r,m)=1$, so $r$ is a unit modulo
$2m$; multiplication by its inverse gives
$\{0,\pm1,m\pm1\}$.

For $(|\alpha|,|\beta|)=(2,4)$ one obtains
\[
 r\pm2s\equiv m\pmod{2m}.
\]
Again $m$ is odd.  Writing $s=2s_0$, the generating condition gives
$(s_0,m)=1$.  Choose an odd integer $u$ with $us_0\equiv1\pmod m$.
Then $u$ is a unit modulo $2m$, $us\equiv2\pmod{2m}$, and the relation
gives $ur\equiv m\pm4\pmod{2m}$.  This is the fifth support in
\eqref{eq:six-families}.

Finally, $(|\alpha|,|\beta|)=(4,2)$ gives
\[
 2r\pm s\equiv0\ \hbox{or}\ m\pmod{2m}.
\]
In the zero alternative, $r$ is a unit and the support becomes
$\cP_{12}$.  In the other alternative $m$ is even, $(r,m)=1$, and
multiplication by $r^{-1}$ gives $\{0,\pm1,m\pm2\}$.  These are all
the nonprimitive cases and prove the stated parity assertions.

Now suppose $\alpha\beta=0$.  The corresponding frequency $t$ has
order $q\le6$.  The five support elements are distinct, so
$q\notin\{1,2\}$, while
\[
 q\in\{3,4,5,6\},\qquad n/q=\gcd(t,n)\le5.
\]
Since $n$ is even and $n\ge16$, the possible pairs $(n,q)$ and their
normalized supports are
\[
\begin{array}{c|l}
(n,q)&P\\ \hline
(16,4)&\cP_{14}\\
(18,6)&\cP_{23}\\
(20,4)&\cP_{25},\ \cP_{45}\\
(20,5)&\cP_{14},\ \cP_{18},\ \cP_{45}\\
(24,6)&\cP_{14},\ \cP_{34}\\
(30,6)&\cP_{25},\ \cP_{56}.
\end{array}
\]
We justify the normalizations directly.  An element $x$ with
$\gcd(x,n)=d$ can be sent to $d$ by a unit modulo $n$.  To see this,
first choose the inverse of $x/d$ modulo $n/d$, and then choose the
lift to be nonzero modulo every prime dividing $n$ but not $n/d$,
using the Chinese remainder theorem.  The resulting unit $u$ satisfies
$ux\equiv d\pmod n$.

An element of order $4$ or $6$ is $\pm n/q$, and multiplication by a
unit preserves this inverse pair.  At $(n,q)=(16,4)$, the other
frequency is odd and hence a unit; at $(18,6)$, it is even and coprime
to $3$, so its gcd with $18$ is $2$.  At $(20,4)$ its possible gcds
with $20$ are $2,4$; at $(24,6)$ its possible gcds with $24$ are
$1,3$; and at $(30,6)$ its possible gcds with $30$ are $2,6$.
These observations give the corresponding five rows.

For $(n,q)=(20,5)$, the order-five pair is $\{\pm4\}$ or
$\{\pm8\}$.  If the other, odd frequency is a unit, normalize it to
$1$, obtaining $\cP_{14}$ or $\cP_{18}$.  Otherwise that frequency
is $\pm5$.  The support $\{0,\pm5,\pm8\}$ is sent to $\cP_{45}$
by multiplication by $3$, which proves the remaining row.

The support $\cP_{56}$ in $C_{30}$ is already excluded by the quotient
of order $\gcd(6,30)=6$.  The support $\cP_{18}$ in $C_{20}$ is
the last family of \eqref{eq:six-families}, with $m=10$.
All the other entries either belong to the first three families or are
listed in \eqref{eq:exceptions}.
\end{proof}

\section{Exclusion of the fixed support families}

We first remove two primitive families by parity compression.  Put
$n=2m$ and define functions on $\Z/m\Z$ by
\begin{equation}
 e_j=a_{2j},\qquad o_j=a_{2j+1}.
 \label{eq:parity-compression}
\end{equation}
Write $e=p_e-q_e$ and $o=p_o-q_o$ for their positive and negative
parts.
Every odd coefficient of $a*a$ is twice a coefficient of $e*o$, so
\begin{equation}
 e*o>0.
 \label{eq:eo-positive}
\end{equation}

\begin{theorem}\label{thm:P12}
The support $\cP_{12}$ is impossible in every even order $n\ge10$.
\end{theorem}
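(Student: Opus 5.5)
The approach is the one the parity compression \eqref{eq:parity-compression} is set up for: work with $e*o$ on $\Z/m\Z$, where $m=n/2\ge5$. With $P=\cP_{12}$ and the masses of \eqref{eq:five-masses} ($p_0=A$, $p_{\pm1}=C$, $p_{\pm2}=D$), the positive parts are
\[
 p_e=A\,\one_{\{0\}}+D\,\one_{\{\pm1\}},\qquad p_o=C\,\one_{\{0,-1\}}.
\]
Here $p_e$ comes from $a_0,a_{\pm2}$, and $p_o$ comes from $a_1=o_0$ and $a_{-1}=o_{-1}$. Hence $p_e*p_o$ is supported on $\{-2,-1,0,1\}$. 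Put $R=\Z/m\Z\setminus\{-2,-1,0,1\}$, which is nonempty because $m\ge5$. Let $X=\sum q_e$ and $Y=\sum q_o$ be the negative even and odd masses, so $M=X+Y$. The aim is to show $\sum_R e*o<0$ when $M>0$; together with $\sum_R e*o=0$ when $M=0$, this contradicts \eqref{eq:eo-positive}.

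\textbf{Transport estimates.} On $R$ we have $e*o=-p_e*q_o-q_e*p_o+q_e*q_o$, and $\sum_R q_e*q_o\le XY$. I bound the two cross terms by transport, as in Lemma~\ref{lem:three-positive}.
\begin{itemize}
\item The mass of $q_o$ lies off $\{0,-1\}$. If it sits in $R$, it stays there under the $0$-mode of $p_e$, of weight $A>D$ by Lemma~\ref{lem:basic}. At $-2$ it is moved to $-3\in R$ by the $(-1)$-mode of $p_e$, and at $1$ it is moved to $2\in R$ by the $(+1)$-mode. Both have weight $D$, and this still works for $m=5$. So $\sum_R p_e*q_o\ge DY$.
\item The mass of $q_e$ lies off $\{0,\pm1\}$. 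At $-2$ it is moved to $-3$ by the $(-1)$-mode of $p_o$. If it sits in $R$, it stays there under the $0$-mode. Both modes have weight $C$, so $\sum_R q_e*p_o\ge CX$.
\end{itemize}
The target inequality is therefore
\[
 XY<CX+DY,\qquad\text{equivalently}\qquad (X-D)(Y-C)<CD .
\]

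\textbf{Mass bounds.} I get the inputs from positivity.
\begin{itemize}
\item As in Lemma~\ref{lem:parity}, the odd Fourier sum is $O=2C-Y$. Then $h(0)>h(m)$ (Lemma~\ref{lem:basic}) gives $O>0$, that is, $Y<2C$, so $Y-C<C$.
\item Lemma~\ref{lem:strong-mass} gives $X+Y\le C+D+\min(C,D)$.
\end{itemize}
Now split into cases.
\begin{itemize}
\item If $X\le D$ or $Y\le C$, the product $(X-D)(Y-C)$ has at most one positive factor, so it is $\le0<CD$.
\item Otherwise $X>D$ and $Y>C$. Then $X-D\le 2C+D-Y-D<C$, and we need the product to be $<CD$. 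Since $(X-D)+(Y-C)\le\min(C,D)$, AM–GM gives $(X-D)(Y-C)\le\min(C,D)^2/4<CD$.
\end{itemize}
Hence $\sum_R e*o\le XY-CX-DY<0$ whenever $M>0$.

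The main obstacle is checking the transport bookkeeping carefully, especially for small $m$ ($m=5,6$), where $R$ is tiny and $\pm3$ may coincide with other points. If the weight assignment fails there, I would fall back on Lemma~\ref{lem:strong-mass} applied directly to $a$, or on the cruder bound $\sum_R q_e*q_o\le XY$ with $\min(C,D)$ weights throughout. That route needs $XY<\min(C,D)(X+Y)$, and it follows from the same sum constraint.
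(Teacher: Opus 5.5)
Your proposal is essentially the paper's proof. It uses the same parity compression and the same complement $R$ of $\{-2,-1,0,1\}$ in $\Z/m\Z$. The same transport bounds give $0<\sum_R e*o\le XY-CX-DY=(X-D)(Y-C)-CD$, and the finish is the same AM--GM step via Lemma~\ref{lem:strong-mass}.

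One step needs repair. In the case ``$X\le D$ or $Y\le C$'' you claim the product $(X-D)(Y-C)$ has at most one positive factor and is therefore $\le0$. This is false when $X<D$ and $Y<C$: then both factors are negative and the product equals $(D-X)(C-Y)>0$. The conclusion $(X-D)(Y-C)<CD$ still holds when $M>0$, but it needs a different reason. The simplest fix is the paper's:
\[
XY-CX-DY=X(Y-C)-DY\le-DY\quad(Y\le C),\qquad XY-CX-DY=Y(X-D)-CX\le-CX\quad(X\le D).
\]
So strict positivity of $\sum_R e*o$ by itself forces $X>D$ and $Y>C$, and your AM--GM step then applies.

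Two minor remarks. The bound $Y<2C$ is never needed. Your transport checks are valid for every $m\ge5$, so the fallback in your last paragraph is unnecessary. It would also not go through: $XY<\min(C,D)(X+Y)$ does not follow from $X+Y\le C+D+\min(C,D)$ once $\max(C,D)\ge2\min(C,D)$.
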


\begin{proof}
For $\cP_{12}$,
\[
 \supp p_e=\{0,\pm1\},\quad p_e(0)=A,\quad p_e(\pm1)=D,
\]
and
\[
 \supp p_o=\{-1,0\},\quad p_o(-1)=p_o(0)=C.
\]
Let $X=\sum q_e$ and $Y=\sum q_o$.  The support of $p_e*p_o$ is
$S=\{-2,-1,0,1\}$, whose complement $R$ is nonempty because $m\ge5$.
Every $q_o$ mass can be moved into $R$ by a $p_e$ coefficient of weight
at least $D$, and every $q_e$ mass by a $p_o$ coefficient of weight
$C$.  Hence \eqref{eq:eo-positive} gives
\begin{equation}
 0<\sum_R e*o\le XY-CX-DY.
 \label{eq:P12-core}
\end{equation}
This forces $X>D$ and $Y>C$.  Write $x=X-D$, $y=Y-C$.  By
Lemma~\ref{lem:strong-mass},
$x+y\le\min(C,D)$, whereas
\[
 XY-CX-DY=xy-CD
 \le\frac{\min(C,D)^2}{4}-CD<0,
\]
contradicting \eqref{eq:P12-core}.
\end{proof}

The following positive-definite test will also be used in odd orders.
The circulant matrix $(a_{i-j})_{i,j\in G}$ is positive
definite because its eigenvalues are the positive values $nh(x)$.

\begin{lemma}\label{lem:A-CD}
Let $a$ be a feasible spectrum with positive support $\{0,\pm r,\pm s\}$, with
$a_0=A$, $a_{\pm r}=C$, and $a_{\pm s}=D$.  If $0,r,s,r+s$ are
distinct and $r+s,r-s$ lie outside the positive support, then
\begin{equation}
 A>C+D.
 \label{eq:A-CD}
\end{equation}
In particular, this holds for $\cP_{23}$ and $\cP_{14}$ when $n\ge10$.
\end{lemma}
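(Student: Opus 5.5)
The hint preceding the lemma says the circulant matrix $(a_{i-j})_{i,j\in G}$ is positive definite, since its eigenvalues are the values $nh(x)>0$. The plan is to test this positive-definite Toeplitz-type form on a vector supported on four frequencies, chosen so that only the entries $A,C,D$ and some nonpositive entries appear. Concretely, I take the principal submatrix indexed by $\{0,r,s,r+s\}$, which is legitimate because these four points are distinct. Its entries are $a$ evaluated at the pairwise differences:
\[
 0-r=-r,\quad 0-s=-s,\quad 0-(r+s),\quad r-s,\quad r-(r+s)=-s,\quad s-(r+s)=-r .
\]
So the matrix is
\[
 \begin{pmatrix}
 A & C & D & a_{r+s}\\
 C & A & a_{r-s} & D\\
 D & a_{r-s} & A & C\\
 a_{r+s} & D & C & A
 \end{pmatrix},
\]
using evenness of $a$.

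By hypothesis $a_{r+s}\le0$ and $a_{r-s}\le0$, since $r\pm s$ lie outside the positive support. I pair this matrix with the test vector $v=(1,-1,-1,1)$. The quadratic form is
\[
 v^{T}Mv=4A-4C-4D+2a_{r+s}+2a_{r-s}.
\]
Here the $C$ entries sit at positions $(0,r)$ and $(s,r+s)$, each with sign $-1$. The $D$ entries sit at $(0,s)$ and $(r,r+s)$, also with sign $-1$. The $a_{r+s}$ entry sits at $(0,r+s)$ with sign $+$, and the $a_{r-s}$ entry at $(r,s)$ with sign $+$. Positive definiteness then gives
\[
 0<4(A-C-D)+2(a_{r+s}+a_{r-s})\le4(A-C-D),
\]
which is \eqref{eq:A-CD}. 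The only care needed is the sign bookkeeping and confirming that distinctness makes this a genuine $4\times4$ principal submatrix, so that strict positivity applies.

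For the applications, I check the hypotheses directly for $n\ge10$.

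For $\cP_{23}$ (with $r=2$, $s=3$), the points $0,2,3,5$ are distinct modulo $n$. The sum $r+s=5$ and the difference $r-s=-1$ must avoid $\{0,\pm2,\pm3\}$. The only possible collisions would require $n$ to divide one of $5,7,8,3,2,1,4$ (from $5\equiv0,\pm2,\pm3$ and $1\equiv0,\pm2,\pm3$), and none of these is divisible by an $n\ge10$.

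For $\cP_{14}$ (with $r=1$, $s=4$), the points $0,1,4,5$ are distinct. The values $5$ and $-3$ avoid $\{0,\pm1,\pm4\}$, because a collision would require $n$ to divide one of $5,4,6,1,9,3,2,7$, and all of these are less than $10$.

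The main potential obstacle is simply making sure the chosen vector sees no $a_{2r}$ or $a_{2s}$ entries. That is exactly why the indices $\{0,r,s,r+s\}$ are used: every pairwise difference lies in $\{\pm r,\pm s,\pm(r+s),\pm(r-s)\}$.
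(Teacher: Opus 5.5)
Your proof is correct and is essentially the paper's argument: the paper tests positive definiteness of $(a_{i-j})$ on $z=e_0-e_r-e_s+e_{r+s}$, which is your vector $(1,-1,-1,1)$ on the principal submatrix indexed by $\{0,r,s,r+s\}$, and obtains $0<4A-4C-4D+2a_{r+s}+2a_{r-s}\le4(A-C-D)$. Your divisibility check for $\cP_{23}$ and $\cP_{14}$ when $n\ge10$ makes explicit what the paper asserts in one line.
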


\begin{proof}
Let $e_j$ be the coordinate vector at $j$ and put
$z=e_0-e_r-e_s+e_{r+s}$.  Positive definiteness gives
\[
 0<z^*(a_{i-j})z
 =4A-4C-4D+2a_{r+s}+2a_{r-s}
 \le4(A-C-D).
\]
For the two stated supports, the hypotheses hold with $(r,s)=(2,3)$
and $(1,4)$, respectively.
\end{proof}

\begin{theorem}\label{thm:P23}
The support $\cP_{23}$ is impossible in every even order $n\ge18$.
\end{theorem}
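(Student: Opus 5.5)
Following the proof of Theorem~\ref{thm:P12}, I will use the parity compression \eqref{eq:parity-compression}. For $\cP_{23}$ with $n=2m\ge18$, the even part has $\supp p_e=\{0,\pm1\}$ with $p_e(0)=A$ and $p_e(\pm1)=C$. The odd part has $\supp p_o=\{1,-2\}$, since $3=2\cdot1+1$ and $-3=2(-2)+1$, with weight $D$ at each point. Then $p_e*p_o$ is supported on $S=\{0,1,2\}\cup\{-3,-2,-1\}$, which has six elements. Its complement $R$ in $\Z/m\Z$ is nonempty because $m\ge9$, and in fact has at least three elements. As before, put $X=\sum q_e$ and $Y=\sum q_o$. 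The aim is the inequality
\[
 0<\sum_R e*o\le XY-\lambda X-\mu Y,
\]
where $\lambda$ and $\mu$ are the guaranteed transport weights, followed by a contradiction.

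\textbf{Transports.} For $q_o$ mass at a position $j$, I need some $i\in\{0,\pm1\}$ with $i+j\in R$. Every position has a translate by $\{0,\pm1\}$ outside $S$ except those where all three of $j-1,j,j+1$ lie in $S$, namely $j\in\{1,-2\}$. For those two, I will use the $p_o$-mass instead; but $q_o$ must be paired with $p_e$, so only weight $C$ or $A$ is available. Checking: $j=1$ gives $\{0,1,2\}\subset S$, so $q_o$ mass at $1$ or $-2$ cannot be moved into $R$. However, $q_o(1)$ and $q_o(-2)$ vanish, since positions $1$ and $-2$ carry the positive coefficients $a_{\pm3}$. For every other $j$ one of the three translates lies in $R$, with weight at least $\min(A,C)=C$ (using $A>C$ from Lemma~\ref{lem:basic}). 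For $q_e$ mass at $j$, I need $i\in\{1,-2\}$ with $i+j\in R$. Here $j$ fails only if both $j+1$ and $j-2$ lie in $S$, which happens for $j\in\{-1,0,1\}$ (for instance, $j=0$ gives $1$ and $-2$, both in $S$), and also possibly for $j=-2$ or $j=2$ depending on $m$. Since $q_e$ vanishes at $0,\pm1$, the remaining question is whether positions such as $\pm2$, that is frequencies $\pm4$, can be stuck. For $m\ge9$, one checks $\{3,0\}$ and $\{-1,-4\}$: for $j=2$, $j+1=3\notin S$, and for $j=-2$, $j-2=-4\notin S$, so both are fine. This gives $\lambda=D$ and $\mu=C$, so $0<XY-DX-CY$, which forces $X>C$ and $Y>D$.

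\textbf{Contradiction.} Unlike the $\cP_{12}$ case, the labels are now crossed relative to Lemma~\ref{lem:strong-mass}. Writing $x=X-C$ and $y=Y-D$, the expression becomes $xy-CD$, with $x+y=M-C-D\le\min(C,D)$. The identical AM--GM estimate $xy\le\min(C,D)^2/4<CD$ then finishes the argument. So this matches the earlier proof exactly once the supports are computed, and Lemma~\ref{lem:A-CD} is not even needed. I would still keep it as a fallback, to upgrade transport weights if some boundary position turns out to be stuck for small $m$.

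\textbf{Main obstacle.} The main obstacle is the careful case check that no negative-mass position is stranded, namely that every $j$ with $q_e(j)>0$ or $q_o(j)>0$ has an admissible translate into $R$. This must be verified uniformly for $m\ge9$, watching for wraparound coincidences modulo $m$, which is why the threshold is $n\ge18$ rather than $n\ge10$. I will verify it by listing $S$ explicitly and checking the finitely many positions near $S$; positions far from $S$ are trivially fine.
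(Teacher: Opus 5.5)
Your proposal has a genuine gap: the transport check for the odd part is wrong, and this is exactly where $\cP_{23}$ differs from $\cP_{12}$.

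\textbf{Where the argument breaks.} The set $S=\{-3,-2,-1,0,1,2\}$ is an interval of six consecutive residues. So the positions $j$ with $j-1,j,j+1\in S$ are its four interior points $-2,-1,0,1$, not only $1,-2$. Positions $1$ and $-2$ carry $a_{\pm3}>0$. But $o_0=a_1$ and $o_{-1}=a_{-1}$ are in general negative, and every translate of $0$ or $-1$ by $\supp p_e=\{0,\pm1\}$ stays inside $S$.

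Hence the mass $U=q_o(0)+q_o(-1)$ contributes nothing to $\sum_R p_e*q_o$. In your labelling, boundary transport gives only $0<XY-DX-C(Y-U)$. That yields $Y>D$ but no contradiction; for instance, the odd negative mass could sit entirely at frequencies $\pm1$. Your fallback of upgrading weights with Lemma~\ref{lem:A-CD} cannot repair this. The stranded positions have no translate into $R$ at all, whatever the weights. Your remark that Lemma~\ref{lem:A-CD} is not needed is therefore also wrong.

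\textbf{The missing idea.} You need to change the test vector, not the weights. After extracting $Y>D$ from the unweighted bound, the paper sums $e*o$ against $w_0=\one_R+t\one_H$, where $H=\{-1,0\}$ and, in your notation, $t=C/(A+C)$.
\begin{itemize}
\item The zero mode and one mode $\pm1$ of $p_e$ keep the stranded mass inside $H$, with total weight $t(A+C)=C$.
\item All other transports keep weight at least $C$ (for $q_o$) or $D$ (for $q_e$), and $\ip{w_0}{q_e*q_o}\le XY$ since $t<1$.
\item The price is $\ip{w_0}{p_e*p_o}=2tCD=2C^2D/(A+C)$.
\end{itemize}
This gives $0<XY-DX-CY+2C^2D/(A+C)$. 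Because $A>C$, the added term is less than $CD$, and this forces $X>C$. Writing $x=X-C$, $y=Y-D$, Lemma~\ref{lem:strong-mass} gives $xy-CD\le\min(C,D)^2/4-CD$.

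To finish, this must beat the added term, and here Lemma~\ref{lem:A-CD} is indispensable. The bound $A>C+D$ gives $2C^2D/(A+C)<2C^2D/(2C+D)\le CD-\min(C,D)^2/3$. So the total is negative, which is the contradiction.
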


\begin{proof}
Write $n=2m$, so $m\ge9$, and set $a_{\pm2}=D$, $a_{\pm3}=C$.
After \eqref{eq:parity-compression},
\[
 \supp p_e=\{0,\pm1\},\quad p_e(0)=A,\quad p_e(\pm1)=D,
\]
and
\[
 \supp p_o=\{-2,1\},\quad p_o(-2)=p_o(1)=C.
\]
Let $X=\sum q_e$, $Y=\sum q_o$, and
\[
 S=\{-3,-2,-1,0,1,2\},\quad R=(\Z/m\Z)\setminus S,
 \quad H=\{-1,0\},\quad U=\sum_Hq_o.
\]
Put $F=XY-CX-DY$.  Boundary transport gives
\begin{equation}
 0<\sum_Re*o\le F+DU.
 \label{eq:P23-first}
\end{equation}
If $Y\le C$, the right side is at most $X(Y-C)\le0$; hence $Y>C$.

Set $t=D/(A+D)$ and use the nonnegative weight
$w_0=\one_R+t\one_H$.  Here
$\ip{f}{g}=\sum_{j\in\Z/m\Z}f_jg_j$.
The weights contributed by $p_e$ at the negative boundary positions
$-3,-1,0,2$ of $o$ are, respectively,
$D,t(A+D),t(A+D),D$.  The weights contributed by $p_o$ at the negative
boundary positions $-3,-2,2$ of $e$ are
$C,C(1+t),C(1+t)$.  For positions outside $S$, the corresponding
weights are at least $A>D$ and $C$; the required translates remain in
$R$ for $m\ge9$.  Indeed, $R=\{3,\ldots,m-4\}$ has at least three
points, so for every $j\in R$ at least one of $j-2,j+1$ lies in $R$.
Since $t(A+D)=D$, we obtain
\[
 \ip{w_0}{p_e*p_o}=2tCD,\qquad
 \ip{w_0}{p_e*q_o}\ge DY,\qquad
 \ip{w_0}{q_e*p_o}\ge CX,\qquad
 \ip{w_0}{q_e*q_o}\le XY.
\]
Thus
\begin{equation}
 0<\ip{w_0}{e*o}\le F+\frac{2CD^2}{A+D}.
 \label{eq:P23-weighted}
\end{equation}
If $X\le D$, then $F\le-CD$, while $A>D$ makes the added term smaller
than $CD$; hence $X>D$.  Put $x=X-D$, $y=Y-C$, and
$s_0=\min(C,D)$.  Lemma~\ref{lem:strong-mass} gives $x+y\le s_0$, so
\[
 F=xy-CD\le\frac{s_0^2}{4}-CD.
\]
By Lemma~\ref{lem:A-CD},
\[
 \frac{2CD^2}{A+D}
 <\frac{2CD^2}{C+2D}
 =CD-\frac{C^2D}{C+2D}
 \le CD-\frac{s_0^2}{3}<CD-\frac{s_0^2}{4}.
\]
Here $C^2D\ge\max(C,D)s_0^2$ and $C+2D\le3\max(C,D)$.
The right side of
\eqref{eq:P23-weighted} is therefore negative.
\end{proof}

\begin{proposition}\label{prop:half-period}
Let $n=2m$.  The support $\{0,\pm1,m\pm1\}$ is infeasible when
$m\ge6$.  The support $\{0,\pm2,m\pm4\}$ is infeasible for odd
$m\ge11$, and $\{0,\pm1,m\pm2\}$ is infeasible for even $m\ge10$.
\end{proposition}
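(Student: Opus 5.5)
Each of the three supports contains the order-two element $m$ in a translate, so I would use parity compression \eqref{eq:parity-compression} together with the order-two quotient.

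\emph{First support.} For $\{0,\pm1,m\pm1\}$ with $m$ odd, quotient by $K=\{0,m\}$ using Lemma~\ref{lem:quotient}. The image is a cyclic group of order $m\ge6$, and by \eqref{eq:quotient-support} its positive support lies in $\pi(P)=\{0,\pm1\}$, since $m\pm1\mapsto\pm1$. This has at most three positive positions, contradicting Lemma~\ref{lem:three-positive}. This argument only needs $m\ge6$; parity of $m$ plays no role, so the case is complete.

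\emph{Second support.} For $\{0,\pm2,m\pm4\}$ with $m$ odd, the same quotient sends the support to $\{0,\pm2,\pm4\}$ in $\Z/m\Z$. Since $m$ is odd, $2$ is a unit there, and multiplying by its inverse gives $\cP_{12}$ in odd order $m\ge11$. Lemma~\ref{lem:three-positive} no longer suffices, because five positive positions are allowed in odd order. So I would prove an odd-order analogue of Theorem~\ref{thm:P12}: $\cP_{12}$ is infeasible in $\Z/m\Z$ for $m$ large. The plan is to run the boundary-transport argument of Theorem~\ref{thm:short-relation} with $T=\{-4,\dots,4\}$ and $R=G\setminus T$, which is nonempty once $m\ge10$, where $p*p$ vanishes.
\begin{itemize}
\item Mass at $k\in R$ is moved by the zero mode and one $\pm1$ mode, with weight $A+C$.
\item Mass at $\pm3,\pm4$ is moved outward by both a $\pm1$ mode and a $\pm2$ mode, with weight $C+D$.
\item Mass at $0,\pm1,\pm2$ does not reach $R$ in a single step.
\end{itemize}
Lemma~\ref{lem:strong-mass} gives $M\le C+D+\min(C,D)<2(C+D)$. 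The inner masses are the problem, so instead of $\one_R$ I would use a graded weight $w=\one_R+t(\one_{\pm3}+\one_{\pm4})$ with $t=D/(A+D)$, in the style of Theorem~\ref{thm:P23}, and close with Lemma~\ref{lem:A-CD}, which gives $A>C+D$. I would check that $\cP_{12}$ in the quotient satisfies the hypotheses of that lemma. If the bookkeeping fails, a fallback is to work directly in $C_{2m}$: parity compression gives $p_e$ supported on $\{0,\pm1\}$ and $p_o$ on two points near $m/2$, and then I would repeat the proof of Theorem~\ref{thm:P12} using \eqref{eq:eo-positive}.

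\emph{Third support.} For $\{0,\pm1,m\pm2\}$ with $m$ even, odd frequencies are only $\pm1$ and even ones are $0,m\pm2$. Parity compression gives $p_e$ supported on $\{0,m/2\pm1\}$ and $p_o$ on $\{-1,0\}$. Then $p_e*p_o$ is supported on $\{-1,0,m/2-2,\dots,m/2+1\}$, so its complement $R$ is nonempty for $m\ge10$. Now I would copy the proof of Theorem~\ref{thm:P12} verbatim:
\begin{itemize}
\item $q_o$ mass is moved into $R$ by a $p_e$ coefficient of weight at least $D$;
\item $q_e$ mass is moved by a $p_o$ coefficient of weight $C$.
\end{itemize}
This gives $0<XY-CX-DY$. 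With $x=X-D$ and $y=Y-C$, Lemma~\ref{lem:strong-mass} gives $x+y\le\min(C,D)$, which yields the contradiction. The work is in verifying the transports. Because $R$ is a union of two intervals of length about $m/2-4\ge1$, each boundary point must have at least one translate in $R$, and that is where $m\ge10$ enters.

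\textbf{Main obstacle.} I expect the hard step to be the second support, namely odd-order $\cP_{12}$, where the weighted transport must absorb the inner negative masses.
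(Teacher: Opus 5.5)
\paragraph{Gap in the third support.} Your first support is handled exactly as in the paper. The third support is where the argument breaks. You claim that every $q_e$ mass can be moved into $R$ by a $p_o$ coefficient, but this fails at position $m/2$ of $e$, which is the frequency $m$. That frequency is not in $P$ and may carry negative mass $z=q_m$. Its two translates by $\supp p_o=\{-1,0\}$ are $m/2-1$ and $m/2$. Both lie in $S=\{-1,0,m/2-2,m/2-1,m/2,m/2+1\}$, since $m/2=(m/2+1)+(-1)$. In the original group this is the statement that $m\pm1\in P+P$.

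So you only get $\sum_R q_e*p_o\ge C(X-z)$, and the estimate becomes
\[
 0<(X-D)(Y-C)-CD+Cz .
\]
This gives no contradiction when $z$ is comparable to $X$; for example, $X=z>D$ makes the right side $Y(X-D)>0$. Translating the odd part only relabels positions and does not remove the obstruction.

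\paragraph{Fix for the third support.} The missing step is the order-two quotient you already used for the first support. Reduction modulo $m$ sends $\{0,\pm1,m\pm2\}$ into $\{0,\pm1,\pm2\}$ in the even group $\Z/m\Z$, $m\ge10$. It also absorbs the troublesome mass into $b_0=A-q_m$. If all five positions stay positive, Theorem~\ref{thm:P12} excludes the quotient. Otherwise at most three positive positions remain, and Lemma~\ref{lem:three-positive} (or Theorem~\ref{thm:no-four-even}) applies. This is the paper's argument.

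\paragraph{Second support.} Your primary plan would not go through as written. Lemma~\ref{lem:A-CD} does not apply to $\cP_{12}$, because $r-s=\mp1$ lies in the positive support. Also, nothing available covers $\cP_{12}$ in the odd orders $m=11,13,15$: Lemma~\ref{lem:odd12} is only for order at least $17$.

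Your fallback, however, is exactly the paper's proof. Write $m=2r_0+1$ and translate the odd part by $r_0$. The positive supports become $\{0,\pm1\}$ and $\{\pm2\}$, with $S=\{\pm1,\pm2,\pm3\}$. All required transports exist for odd $m\ge11$. Here the frequency $m$ sits at position $0\in R$ of $o$ and is kept there by the zero mode, so the obstruction above does not arise. In effect, the two techniques belong the other way round: compression for the second support, quotient for the third.
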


\begin{proof}
For $\{0,\pm1,m\pm1\}$, reduction modulo $m$ gives at most the three
positive positions $\{0,\pm1\}$ in a quotient of order $m\ge6$,
contrary to Lemmas~\ref{lem:quotient} and
\ref{lem:three-positive}.

For $\{0,\pm1,m\pm2\}$, reduction modulo $m$ gives a feasible
spectrum on the even group $\Z/m\Z$ whose positive support is contained
in $\cP_{12}$.  If all five positions remain positive,
Theorem~\ref{thm:P12} gives a contradiction; if not,
Theorem~\ref{thm:no-four-even} does.  It remains to consider
\[
 \{0,\pm2,m\pm4\},\qquad m\ge11\text{ odd}.
\]
Take $a_{\pm2}=D$ and $a_{m\pm4}=C$.  Write $m=2r_0+1$ and
translate the odd part in
\eqref{eq:parity-compression} by $r_0$.  The two positive supports in
$\Z/m\Z$ become $\{0,\pm1\}$ and $\{\pm2\}$.  The support of their
positive convolution is $S=\{-3,-2,-1,1,2,3\}$.  Its complement is
nonempty, and every negative mass from either factor can be translated
there with weight at least $D$ or $C$, respectively.  The estimate is
again \eqref{eq:P12-core}, and the proof of
Theorem~\ref{thm:P12} gives a contradiction.
\end{proof}

\section{Four positive coefficients and the small orders}

The even-order four-point problem has two, rather than one, automorphism
types.  Keeping both types is important when the nonzero pair has even
frequency.

\begin{theorem}\label{thm:no-four-even}
If $n\ge10$ is even, then $\kappa_2(n)\ge5$.
\end{theorem}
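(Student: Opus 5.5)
The plan is to combine the parity of the positive support with the transport estimate of Lemma~\ref{lem:three-positive}. Lemma~\ref{lem:three-positive} already excludes at most three positive coefficients, so it suffices to rule out exactly four. Let $n=2m$ and suppose $|P|=4$. Then $0\in P$, and $P\setminus\{0\}$ is closed under inversion, so it consists of one inversion pair together with the unique involution $m$. Thus $P=\{0,\pm r,m\}$. Lemma~\ref{lem:basic} says $\langle r,m\rangle=G$. Either $r$ is a unit, and after an automorphism $P=\{0,\pm1,m\}$, or $r$ is even, in which case $\langle r\rangle$ has index two and $m$ must be odd. In the second case the normalization procedure from Proposition~\ref{prop:six-families} sends $r$ to $2$, giving $P=\{0,\pm2,m\}$ with $m$ odd. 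These are the two automorphism types mentioned before the statement.

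Write $p_0=A$, $p_{\pm r}=C$, $p_m=E$, and let $M=\sum q$. As in Lemma~\ref{lem:strong-mass},
\[
 \Delta=\sum_x\bigl(h(0)-h(x)\bigr)=2C+E-M>0,
\]
and $|a_k|\le\Delta$ for $k\ne0$. Applying this with $k=m$ gives $E\le 2C+E-M$, that is,
\[
 M\le 2C.
\]
This is the key mass bound. The rest of the argument shows that the transport estimate only needs weight $C$ per unit of negative mass.

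\emph{Type $\{0,\pm1,m\}$.} Here $T=P+P=\{0,\pm1,\pm2,m,m\pm1\}$, which has at most $9$ elements, so $R=G\setminus T$ is nonempty when $n\ge10$.
\begin{itemize}
\item Negative mass already in $R$ is kept in $R$ by the zero mode, with weight $A>C$ by Lemma~\ref{lem:basic}.
\item Mass at $\pm2$ is sent by $\pm1$ to $\pm3$.
\item Mass at $m\pm1$ is sent by $\pm1$ to $m\pm2$.
\end{itemize}
Since $m\ge5$, both $\pm3$ and $m\pm2$ lie in $R$. Hence
\[
 \sum_Rp*q\ge CM,\qquad \sum_Rq*q\le M^2,\qquad p*p|_R=0,
\]
so $\sum_Ra*a\le M(M-2C)\le0$. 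This contradicts $a*a>0$ on the nonempty set $R$.

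\emph{Type $\{0,\pm2,m\}$ with $m$ odd.} Here $T=\{0,\pm2,\pm4,m,m\pm2\}$.
\begin{itemize}
\item Mass in $R$ is kept by the zero mode.
\item Mass at $m\pm2$ is sent by $\pm2$ to $m\pm4$. These points are odd and differ from $m,m\pm2$, so they lie in $R$.
\item Mass at $\pm4$ is sent by $\pm2$ to $\pm6$. This lies in $R$ whenever $n\ge12$, since $6$ is even and $6\not\equiv\pm4,\pm2,0$ in that range.
\end{itemize}
So for $n\ge12$ the same inequality $\sum_Ra*a\le M(M-2C)\le0$ follows.

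The main obstacle is the single order $n=10$ in the second type. There $6\equiv-4$, so the $\pm2$-transport from $\pm4$ lands back in $T$. The alternative route sends $\pm4$ by the mode $m=5$ to $\pm1\in R$, but only with weight $E$, and $E$ may be smaller than $C$. I would first try to close this case by a direct argument. Reducing modulo $2$ via Lemma~\ref{lem:quotient} is uninformative. Instead, note that $G=C_{10}$ has only the unknowns $A,C,E$ and the four negative values $q_{\pm1},q_{\pm3},q_{\pm4}$, $q_5$ up to evenness. I would write out the odd-index coefficients of $a*a$ explicitly, or equivalently use the parity compression \eqref{eq:parity-compression} on $\Z/5\Z$. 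Then I would combine the resulting inequality with $M\le 2C$ and with $E\le\Delta$ used in the other direction. Failing that, I would use a weighted version of $R$ as in Theorem~\ref{thm:P23}, placing a small weight on the positions $\pm4$. If all of this fails, I would fall back on a finite case analysis of $C_{10}$.
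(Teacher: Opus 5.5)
\textbf{Gap: the case $n=10$, $P=\{0,\pm2,5\}$, is not proved.} Everything else in your argument is sound.

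Your mass bound differs from the paper's but is valid and more uniform. You apply $|a_k|\le\Delta$ at the involution $k=m$ and get $M\le 2C$ for both automorphism types at once. The paper instead uses $h(0)>h(2)$ for $\{0,\pm1,m\}$ and $h(0)>h(d)$ at a specially chosen point $d$ for $\{0,\pm2,m\}$. The non-strict inequality is enough, because $\sum_R a*a>0$ on a nonempty $R$. Your transports are correct for $\{0,\pm1,m\}$ whenever $m\ge5$, and for $\{0,\pm2,m\}$ whenever $m\ge7$. Order $12$ never occurs in the second type, since $m$ must be odd.

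You correctly locate the obstacle at $n=10$ but stop at a list of strategies, so as written the theorem is not established in order $10$. This case does not fit the transport scheme. Here $R=\{\pm1\}$, mass at $\pm4$ reaches $R$ only through the mode $5$ with weight $E$, and the crude bound $\sum_R q*q\le M^2$ is far too weak. You need the exact coefficient.

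Write $q_{\pm1}=x$, $q_{\pm3}=y$, $q_{\pm4}=z$; these are the only positions outside $P$. Then
\[
 \tfrac12(a*a)_1=yz-(A+C)x-Cy-Ez .
\]
If $y=0$ the right side is nonpositive, so positivity forces $y>0$. It then gives $yz>Cy$, that is, $z>C$. But your own bound $M\le 2C$ reads $x+y+z\le C$, so $z<C$, a contradiction. So the first of your proposed routes (explicit odd coefficient plus $M\le 2C$) closes the case in one line.

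The paper closes it differently. Positivity of $h(0)$ and $h(5)$ makes the sum of the even-indexed coefficients positive. The sum of the odd-indexed coefficients of $a*a$ is twice the product of the even and odd coefficient sums, so its positivity gives $E>2x+2y$. Then $(a*a)_1>0$ forces $y>E$, which is impossible.
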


\begin{proof}
By Lemma~\ref{lem:three-positive}, it suffices to exclude a feasible
spectrum with exactly four positive coefficients.  Write $n=2m$ and
suppose that such a spectrum exists.  By Lemma~\ref{lem:basic}, its
support is
\[
 P=\{0,m,\pm r\},\qquad (r,m)=1.
\]
If $r$ is odd, an automorphism sends it to $1$.  If $r$ is even, then
$m$ is odd and an odd unit modulo $2m$ sends it to $2$.  Write
\[
 p_0=A,\qquad p_m=B,\qquad p_{\pm r}=C,
 \qquad M=\sum q_k.
\]
Lemma~\ref{lem:basic} gives $A>C$.

First let $r=1$, and put $\theta=2\pi/n$.  From $h(0)>h(2)$,
\[
 0<n\bigl(h(0)-h(2)\bigr)
 =2C(1-\cos2\theta)
 -\sum_{k\notin P}q_k(1-\cos2k\theta).
\]
For $k\notin P$, the residue $2k$ is nonzero and even, so
$1-\cos2k\theta\ge1-\cos2\theta$.  Hence
\begin{equation}
 M<2C. \label{eq:four-mass}
\end{equation}
Let
\[
 T=P+P=\{0,\pm1,\pm2,m,m\pm1\},\qquad R=G\setminus T.
\]
For $n\ge10$, $R$ is nonempty.  If negative mass is already in $R$,
the zero mode keeps it there with weight $A>C$.  The four boundary
points $\pm2,m\pm1$ can be moved into $R$ by a mode $\pm1$ of weight
$C$.  Thus
\[
 \sum_Rp*q\ge CM,
 \qquad \sum_Rq*q\le M^2,
 \qquad p*p|_R=0.
\]
This contradicts \eqref{eq:four-mass}, exactly as in
Lemma~\ref{lem:three-positive}.

Now let $r=2$.  If $n\ge14$, choose $t$ with $2t\equiv1\pmod m$ and
put $d=2t\in G$.  At $d$ the mode $m$ has value $1$, the mode $2$ has
phase $2\pi/m$, and every $k\notin P$ has nontrivial phase modulo $m$.
The inequality $h(0)>h(d)$ again gives $M<2C$.  With
\[
 T=\{0,m,\pm2,\pm4,m\pm2\},\qquad R=G\setminus T,
\]
the boundary points $\pm4,m\pm2$ are moved into $R$ by modes $\pm2$.
The same complementary sum is nonpositive, a contradiction.

It remains to treat $n=10$.  Here
\[
 P=\{0,5,\pm2\},\quad
 a_{\pm1}=-x,\quad a_{\pm3}=-y,\quad a_{\pm4}=-z
\]
with $x,y,z\ge0$.  Let $E$ and $O$ be the sums of the even and odd
Fourier coefficients.  Positivity of $h(0)$ and $h(5)$ gives
$E+O>0$ and $E-O>0$, hence $E>0$.  Since every odd coefficient of
$a*a$ is positive,
\[
 0<\sum_{j\ \mathrm{odd}}(a*a)_j=2EO,
\]
and therefore $O>0$, or $B>2x+2y$.  Directly,
\[
 (a*a)_1=2\bigl(yz-x(A+C)-Cy-Bz\bigr)>0.
\]
This forces $z>0$ and $y>B$, a contradiction.
\end{proof}

The next theorem gives the complete low-order values needed later.

\begin{theorem}\label{thm:small-table}
For $5\le n\le15$,
\[
\begin{array}{c|ccccccccccc}
n&5&6&7&8&9&10&11&12&13&14&15\\ \hline
\kappa_2(n)&3&4&5&4&5&5&5&5&5&5&5.
\end{array}
\]
\end{theorem}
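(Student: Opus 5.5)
Each entry of the table requires a lower bound and a matching construction. The upper bounds will come from Lemma~\ref{lem:additive-basis}. For each $n$ we exhibit an inversion-closed set $P\ni 0$ with $P+P=G$ and $|P|$ equal to the tabulated value:
\begin{itemize}
\item $n=5$: take $\{0,\pm1\}$.
\item $n=6$: take $\{0,3,\pm1\}$, since $P+P=\{0,\pm1,\pm2,3,3\pm1\}$ is all of $C_6$.
\item $n=8$: take $\{0,4,\pm1\}$, since $P+P=\{0,\pm1,\pm2,4,3,5\}$ is all of $C_8$.
\item $n=7$ and $n=9$: take $\cP_{12}$, whose sumset is $\{0,\pm1,\dots,\pm4\}$.
\item $10\le n\le15$: find five-point sets $\{0,\pm r,\pm s\}$ with $|P+P|=13\ge n$ when $n\le13$, such as $\cP_{12}$ for $n\le 9$ and suitable $\cP_{rs}$ otherwise. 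For $n=10,11,12,13$, choose $r,s$ so that the $13$ sums $\phi(B_2)$ cover $G$. For example, $\cP_{13}$ in $C_{13}$ gives sums $0,\pm1,\pm2,\pm3,\pm4,\pm6,\pm 2\cdot 3=\pm6$; this must be checked and adjusted, and $\cP_{15}$ is another candidate. For $n=14,15$ a five-element $P$ has at most $13$ sums, so additive bases cannot work.
\end{itemize}
For $n=14,15$ we therefore construct explicit feasible spectra directly. We would choose $P$ with $P+P$ missing one or two points, typically one where Theorem~\ref{thm:short-relation} permits a relation, such as $\cP_{14}$ or $\cP_{12}$ in $C_{15}$ with a quotient of order $5$ or $3$. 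We then place small negative coefficients at the missing points so that $(p*q)$ fills them positively there. The rational data would be verified by listing $h(x)$ and $(a*a)_k$.

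The lower bounds then follow from earlier results:
\begin{itemize}
\item $\kappa_2(5)\ge3$: Lemma~\ref{lem:basic}(iii) together with odd parity, since $\{0\}$ does not generate.
\item $n=6,8$: Lemma~\ref{lem:three-positive} gives $\ge4$.
\item Odd $n\ge7$: the same lemma gives $\ge5$.
\item Even $n\ge10$: Theorem~\ref{thm:no-four-even} gives $\ge5$.
\end{itemize}
So every entry is a matching pair once the constructions are in hand.

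The main obstacle is $n=14$ and $n=15$, where no additive basis of size five exists. There the constructive feasibility proof has two parts. First, pick a support compatible with Theorem~\ref{thm:short-relation} and with the parity and quotient obstructions; for $n=14$, $r,s$ must have opposite parity only if $n\ge10$ even, e.g.\ $\cP_{12}$ is excluded by Theorem~\ref{thm:P12}, so try $\cP_{23}$ or $\cP_{14}$. Second, tune $A,C,D$ and the negative masses so that $h>0$ and $a*a>0$ on the uncovered frequencies. I would first try $\cP_{14}$ in $C_{14}$ and $\cP_{14}$ or $\cP_{24}$ in $C_{15}$, giving negative values to the one or two uncovered points and solving the small linear inequalities numerically, then rounding to rational certificates.
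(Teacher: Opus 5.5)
Your lower bounds (generation plus parity for $n=5$, Lemma~\ref{lem:three-positive} for $n=6,8$ and for odd $n$, Theorem~\ref{thm:no-four-even} for even $n\ge10$) match the paper's. So do your additive bases for $n=5,\dots,9$. For $n=10,\dots,13$ you never fix the sets, and your sample $\cP_{13}$ fails in $C_{13}$, since its sumset misses $\pm5$. That is only a finite check: $\cP_{14},\cP_{13},\cP_{23},\cP_{15}$ work for $n=10,11,12,13$.

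The genuine gap is at $n=14$ and $n=15$. You rightly call these the main obstacle, but you give no spectrum, and the mechanism you describe cannot produce one.

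Since $a*a=p*p-2\,p*q+q*q$, at any $k\notin P+P$ one has $(a*a)_k=(q*q)_k-2(p*q)_k$. So $p*q$ never fills a missing point positively; it only subtracts. If the negative mass sits only at the uncovered points, the zero mode gives $(p*q)_k\ge Aq_k$ while $(q*q)_k=0$:
\begin{itemize}
\item for $\cP_{14}$ in $C_{14}$ the uncovered point is $7$, and $7+7=0$;
\item for $\cP_{14}$ in $C_{15}$ the uncovered pair is $\pm6$, and $\{\pm6\}+\{\pm6\}=\{0,\pm3\}$.
\end{itemize}
Hence $(a*a)_k\le0$ there.

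What is needed is negative mass at positions $i,j$ with $i+j=k$, ideally with $(k-P)\cap\supp q=\varnothing$ so that $(p*q)_k=0$. This mass must be small enough that $a*a$ stays positive on $P+P$ and $\mathcal F^{-1}a$ stays positive. That is exactly the paper's construction:
\begin{itemize}
\item in $C_{14}$ it places $q$ on $\pm2,\pm5$, giving $(a*a)_7=4q_2q_5=1/80$;
\item in $C_{15}$ it uses $\{0,\pm2,\pm7\}$, which is $\cP_{14}$ after multiplication by the unit $2$. Its uncovered pair is $\pm3$, and $q$ is supported on $\pm6$, so $(p*q)_3=0$ and $(a*a)_3=q_6^2=4/625$.
\end{itemize}

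Even with the right placement, "solve numerically and round" is not yet a proof. The margins are tight (for instance $(a*a)_2=1/360$ in $C_{14}$). The constraint $a*a>0$ is quadratic, not linear. Positivity of $h$ in $C_{14}$ needs explicit cosine estimates. Until such a certificate is written down and checked, $\kappa_2(14)\le5$ and $\kappa_2(15)\le5$ are not established. Separately, your alternative $\cP_{24}$ in $C_{15}$ is $2\cdot\cP_{12}$, whose sumset misses six points, not one or two.
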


\begin{proof}
For $n=5,6,8$, respectively, the sets
\[
 \{0,\pm1\},\qquad \{0,3,\pm1\},\qquad \{0,4,\pm1\}
\]
satisfy $P+P=G$.  Lemma~\ref{lem:additive-basis}, together with the
generating-support and parity arguments above, gives the asserted
values.  For
$n=7,9,10,11,12,13$, the following five-element sets are two-fold
additive bases:
\[
\begin{array}{c|c}
n&P\\ \hline
7&\{0,\pm1,\pm2\}\\
9&\{0,\pm1,\pm2\}\\
10&\{0,\pm1,\pm4\}\\
11&\{0,\pm1,\pm3\}\\
12&\{0,\pm2,\pm3\}\\
13&\{0,\pm1,\pm5\}.
\end{array}
\]
Lemma~\ref{lem:additive-basis} supplies the upper bounds, while
Lemma~\ref{lem:three-positive} and Theorem~\ref{thm:no-four-even}
supply the lower bounds.

For $n=14$, take
\[
\begin{gathered}
 a_0=1,\quad a_{\pm1}=\frac13,\quad a_{\pm4}=\frac3{10},
 \quad a_{\pm2}=-\frac1{24},\quad a_{\pm5}=-\frac3{40},\\
 a_{\pm3}=a_{\pm6}=a_7=0.
\end{gathered}
\]
Its convolution, in cyclic order, is
\[
\left(\frac{5101}{3600},\frac{1069}{1800},\frac1{360},
\frac{257}{1440},\frac{4013}{7200},\frac1{200},\frac3{200},
\frac1{80},\frac3{200},\frac1{200},\frac{4013}{7200},
\frac{257}{1440},\frac1{360},\frac{1069}{1800}\right),
\]
so $a*a>0$.  Write $g_j$ for the unnormalized inverse Fourier values.
Then
\[
 g_j=1+\frac23\cos\theta-\frac1{12}\cos2\theta
 +\frac35\cos4\theta-\frac3{20}\cos5\theta,
 \qquad \theta=\frac{\pi j}{7}.
\]
Using $\cos5\theta=(-1)^j\cos2\theta$ and
$\cos4\theta=(-1)^j\cos3\theta$, the values for $0\le j\le7$ are
strictly positive.  More explicitly, the values at $j=0,7$ are
$61/30$ and $1$.  For $j=1,2,3,4$, the signs of the cosine terms give
\[
 g_1>\frac25,\qquad g_2>\frac25,\qquad
 g_3>\frac{14}{15},\qquad g_4>\frac13.
\]
For the two remaining values one may use
\[
 \cos\frac{2\pi}{7}<\frac58,
 \quad \cos\frac{\pi}{7}<\frac{91}{100},
 \quad \cos\frac{3\pi}{7}<\frac14,
\]
obtained from $333/106<\pi<22/7$ and the alternating Taylor bound
$\cos x\le1-x^2/2+x^4/24$, whose right side decreases on
$0<x<3/2$; these give
$g_5>31/1500$ and $g_6>39/400$.  Thus the displayed spectrum is
feasible and has exactly five positive coefficients.  Together
with Theorem~\ref{thm:no-four-even}, this proves $\kappa_2(14)=5$.

For $n=15$, take
\[
 a_0=1,\qquad a_{\pm2}=a_{\pm7}=\frac7{24},\qquad
 a_{\pm6}=-\frac2{25},
\]
and set the remaining coefficients to zero.  The smallest coefficient
of $a*a$ is $4/625$; explicitly, in cyclic order, the convolution is
\[
\left(
\frac{121777}{90000},\frac{553}{14400},\frac{161}{300},
\frac4{625},\frac{553}{14400},\frac{49}{288},\frac{73}{7200},
\frac{161}{300},\frac{161}{300},\frac{73}{7200},\frac{49}{288},
\frac{553}{14400},\frac4{625},\frac{161}{300},\frac{553}{14400}
\right).
\]
The unnormalized inverse Fourier values reduce to
\[
 \frac{301}{150},\quad
 \frac{1423\pm127\sqrt5}{1200},\quad
 \frac{449\pm199\sqrt5}{600},\quad
 \frac{77}{300}.
\]
The two values in the first $\pm$ pair each occur twice for
$0\le j\le7$, and the other four values occur once.  All are positive;
for the smallest one this follows from
$449^2>5\cdot199^2$.  The spectrum therefore gives the required upper
bound, and Lemma~\ref{lem:three-positive} gives the lower bound.
\end{proof}

\section{The critical support}

We now consider $\cP_{14}$.  We first prove the algebraic estimate
needed for this case.

\begin{lemma}\label{lem:algebra}
Suppose
\[
 0<d<\frac12,\qquad 0\le y\le s,\qquad
 0\le w<2s,\qquad m=2s+w,\qquad m\le1+2d,
\]
and
\begin{equation}
 m(2+2d-m)<4s-4s^2+2y^2.
 \label{eq:algebra-hyp}
\end{equation}
Then
\[
 d(1-y)<s(1-w).
\]
\end{lemma}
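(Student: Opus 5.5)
The plan is to argue by contraposition. Assume the hypotheses except \eqref{eq:algebra-hyp}, together with
\[
 d(1-y)\ge s(1-w),
\]
and derive
\[
 m(2+2d-m)\ge 4s-4s^2+2y^2 .
\]
Note first that $s>0$: if $s=0$ then $y=0$ and $w<2s$ is impossible. So we may divide by $s$, and the negated conclusion reads
\[
 w\ge 1-\frac{d(1-y)}{s}.
\]
This gives a lower bound on $m=2s+w$:
\[
 m\ge m_0:=\max\Bigl(2s,\ 2s+1-\frac{d(1-y)}{s}\Bigr).
\]
The hypotheses also give the upper bounds $m\le 1+2d$ and $m<4s$.

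The function $f(m)=m(2+2d-m)$ is a concave parabola symmetric about $m=1+d$. Hence on the interval $[m_0,1+2d]$ its minimum is attained at an endpoint, and it suffices to prove
\[
 f(m_0)\ge 4s-4s^2+2y^2
 \quad\text{and}\quad
 f(1+2d)\ge 4s-4s^2+2y^2
\]
whenever this interval is nonempty. Nonemptiness is itself a constraint relating $s,d,y$, and I would record it.

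\emph{Right endpoint.} Here $f(1+2d)=1+2d$. Since $y\le s$, the right side is at most $4s-2s^2$. I would show $4s-2s^2\le 1+2d$ using the constraint $m_0\le 1+2d$, which gives
\[
 2s-\frac{d(1-y)}{s}\le 2d .
\]
From this $s$ is bounded in terms of $d$, and the required estimate follows from $d<\tfrac12$. If the crude bound $y\le s$ is too lossy here, I would keep $y$ and use the same constraint in the form $2s^2-2ds\le d(1-y)$.

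\emph{Left endpoint.} If $m_0=2s$, then
\[
 f(2s)-\bigl(4s-4s^2+2y^2\bigr)=4sd-2y^2 .
\]
In this regime $d(1-y)\ge s$, so $d\ge s/(1-y)$ and $4sd\ge 4s^2/(1-y)\ge 2y^2$, using $y\le s$ and $1-y>0$. The value $y=1$ needs a separate line: there the condition reads $0\ge s(1-w)$, forcing $w\ge 1$. In the main regime $m_0=2s+1-d(1-y)/s$, substituting gives an explicit rational expression. After clearing the factor $s^2$ it becomes a polynomial inequality in $(s,d,y)$, to be verified on the region cut out by $0<d<\tfrac12$, $0\le y\le s$, $m_0\le 1+2d$ and $m_0<4s$.

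I expect this last polynomial inequality to be the main obstacle. I would first try to view it as a quadratic in $y$ and locate its extremum, checking the boundary values $y=0$ and $y=s$. I would then treat the result as a quadratic in $d$, using $d<\tfrac12$ and the nonemptiness constraint to pin down the sign. If direct factoring fails, a fallback is to substitute $u=d(1-y)/s$, which lies between the values allowed by $w\in[1-u,2s)$. This should turn the expression into something like a sum of the square $(s-u)^2$ or $(y-\cdots)^2$ plus terms nonnegative by $d<\tfrac12$.
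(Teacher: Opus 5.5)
\textbf{There is a genuine gap: the decisive inequality is left unproved.} Your reduction is sound as far as it goes. The claim $s>0$ is correct, $f(m)=m(2+2d-m)$ is concave, and the endpoint reduction on $[m_0,1+2d]$ is valid. The case $m_0=2s$ is handled correctly, and both endpoint inequalities you need are in fact true. The trouble is that the proposal stops exactly where the content of the lemma lies.

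In the main regime put $w_0=1-d(1-y)/s$. Expanding shows that $f(m)-(4s-4s^2+2y^2)=2dm-N(w)$, where $N(w)=w^2+4sw-2w+2y^2$. Multiplying by $1-y>0$ turns your left-endpoint inequality into exactly
\[
 K(w_0)=2s(2s+w_0)(1-w_0)-N(w_0)(1-y)\ge0 .
\]
This is not a free-standing polynomial fact: $K(1)=-(4s-1+2y^2)(1-y)<0$ whenever $s>1/4$. So the constraints must be used to confine $w_0$ to a range where $K$ is positive. Your proposal lists only candidate manipulations (a quadratic in $y$, then in $d$, a substitution) without identifying that range.

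The right endpoint is also only sketched, and your first route there fails. With $d=0.3$ and $y=0$, the constraint $2s^2-2ds\le d$ allows $s\approx0.565$, where $4s-2s^2\approx1.62>1.6=1+2d$. Keeping $y$ coupled does work, but it is not carried out.

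\textbf{The missing idea, which is how the paper argues, is to use the hypothesis and the negated conclusion together rather than separately.}
\begin{itemize}
\item Since $m\le1+2d$ gives $2+2d-m\ge1$, the hypothesis yields $2s+w<4s-4s^2+2y^2\le4s-2s^2$. Hence $0<s<1$ and $w<2s(1-s)\le\frac12$.
\item The negated conclusion then gives $\frac s2<s(1-w)\le d(1-y)<\frac{1-y}2$, so $y<\frac12$.
\item The hypothesis is the inequality $2dm<N(w)$, which is linear in $d$. Replacing $d$ by its lower bound $s(1-w)/(1-y)$ gives $K(w)<0$ directly for the given $w$. No endpoint analysis in $m$ and no right-endpoint check are needed.
\item $K$ is a concave quadratic in $w$, with leading coefficient $-(2s+1-y)$. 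At the endpoints, $K(0)=4s^2-2y^2(1-y)\ge2s^2$ and $K(\frac12)=2(s+\frac y2-\frac38)^2+\frac3{32}+(\frac12-y)(\frac34+\frac32y-2y^2)\ge\frac3{32}$; the last bound uses $y<\frac12$.
\item Hence $K>0$ on $[0,\frac12]$, which contradicts $K(w)<0$.
\end{itemize}
You could graft this onto your framework by arguing by contradiction at $m=m_0$: if $f(m_0)<4s-4s^2+2y^2$, the same bounds hold for $w_0$. But then the concavity-in-$m$ reduction is unnecessary.
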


\begin{proof}
Since $2+2d-m\ge1$, the hypothesis gives
\[
 2s+w\le m(2+2d-m)<4s-4s^2+2y^2\le4s-2s^2.
\]
Thus $0<s<1$, $y<1$, and
\[
 0\le w<2s(1-s)\le\frac12.
\]
Suppose, to the contrary, that $d(1-y)\ge s(1-w)$.  Then
\[
 \frac s2<s(1-w)\le d(1-y)<\frac{1-y}{2},
\]
so $s+y<1$ and $0\le y<1/2$.

Put $N=w^2+4sw-2w+2y^2$.  Expanding
\eqref{eq:algebra-hyp} and using the contrary assumption gives
\[
 0<N-2dm
 \le N-\frac{2ms(1-w)}{1-y}
 =-\frac{K(w)}{1-y},
\]
where
\[
 K(w)=2(2s+w)s(1-w)
       -(w^2+4sw-2w+2y^2)(1-y).
\]
For fixed $s,y$, this is a concave quadratic in $w$, with quadratic
coefficient $-(2s+1-y)<0$.  Its endpoint values on $[0,1/2]$ satisfy
\[
 K(0)=4s^2-2y^2(1-y)\ge2s^2>0
\]
and
\begin{align*}
 K(1/2)
 &=2\left(s+\frac y2-\frac38\right)^2+\frac3{32}\\
 &\quad+\left(\frac12-y\right)
       \left(\frac34+\frac32y-2y^2\right)
 \ge\frac3{32}.
\end{align*}
Hence $K(w)>0$ throughout $[0,1/2]$, a contradiction.
\end{proof}

We first record a convolution estimate that applies in both parities.

\begin{lemma}\label{lem:tail}
Let $r\ge0$ be an even function on $G$, and put $W=\sum_jr_j$.
If $k\ne0$ and $2k\ne0$, then
\[
 (r*r)_k\le\frac12\left(W^2-\sum_jr_j^2\right)\le\frac{W^2}{2}.
\]
\end{lemma}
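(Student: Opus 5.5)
We expand $(r*r)_k=\sum_j r_j r_{k-j}$ and pair each term with its mirror image; the pairing is an involution on the index set $G$.

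Define $\sigma(j)=k-j$. This is an involution on $G$, and $(r*r)_k=\sum_j r_jr_{\sigma(j)}$. Its fixed points are the $j$ with $2j=k$. Every other orbit $\{j,k-j\}$ contributes $2r_jr_{k-j}$. Using evenness, $r_{k-j}=r_{j-k}$. The key point is that each unordered pair $\{j,k-j\}$ with $j\ne k-j$ appears exactly twice in the sum. So the non-fixed part equals $2\sum_{\{j,l\}\in\mathcal O}r_jr_l$, where $\mathcal O$ is a family of \emph{distinct} unordered pairs of distinct elements, each element lying in exactly one pair of $\mathcal O$. Therefore
\[
 2\sum_{\{j,l\}\in\mathcal O}r_jr_l\le2\sum_{\{j,l\}:\,j\ne l}r_jr_l=W^2-\sum_jr_j^2 .
\]
Each pair in $\mathcal O$ is a distinct unordered pair, so the middle inequality is just dropping nonnegative terms.

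The remaining issue is the fixed points $j$ with $2j=k$. There they contribute $r_j^2$, which is not bounded by cross terms in general. This is where the hypotheses enter. I expect to use $k\ne0$ and $2k\ne0$ together with evenness: if $2j=k$, then $-j$ satisfies $2(-j)=-k\ne k$ (because $2k\ne0$), so $-j$ is not a fixed point. Moreover $j\ne -j$, because $2j=k\ne0$. Evenness gives $r_j=r_{-j}$, so $r_j^2=r_jr_{-j}$ is a product over the unordered pair $\{j,-j\}$ of distinct elements. We must check that $\{j,-j\}$ is not already in $\mathcal O$, i.e.\ that $-j\ne k-j$; this holds because $k\ne0$. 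We must also make sure that the different fixed points $j$ give different pairs $\{j,-j\}$, and that no such pair coincides with an orbit of $\sigma$ or with another fixed-point pair. Two fixed points $j,j'$ differ by an element of order $\le2$. The pair $\{j,-j\}$ could equal $\{j',-j'\}$ only if $j'=-j$, but $-j$ is not fixed. So each fixed point contributes $r_jr_{-j}$ over a distinct new pair, and the fixed-point contribution is at most one copy of that cross term. Combined with the factor $2$ available in $W^2-\sum r_j^2=2\sum_{\{j,l\}}r_jr_l$, these fixed-point terms fit in.

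Altogether, $(r*r)_k$ is bounded by twice the sum of $r_jr_l$ over a collection of distinct unordered pairs of distinct elements. The collection consists of the orbit pairs (counted with weight $2$, which is exactly their multiplicity) and the fixed-point pairs $\{j,-j\}$ (counted with weight $1\le2$). Hence
\[
 (r*r)_k\le\tfrac12\Bigl(W^2-\sum_jr_j^2\Bigr)\le\tfrac{W^2}{2}.
\]
The main obstacle is the bookkeeping that these pairs are pairwise distinct. I would handle it by noting that the orbit pairs have sum $k$ while the fixed-point pairs have sum $0\ne k$, so the two kinds never coincide.
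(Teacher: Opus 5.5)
You prove a bound twice as large as the one claimed. Your distinctness checks are correct, but they yield only this: $(r*r)_k$ is at most twice the sum of $r_jr_l$ over a family of distinct unordered pairs. Hence $(r*r)_k\le 2\sum_{\{j,l\}:\,j\ne l}r_jr_l=W^2-\sum_jr_j^2$.

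The target is $\frac12\bigl(W^2-\sum_jr_j^2\bigr)=\sum_{\{j,l\}:\,j\ne l}r_jr_l$, which allows each unordered pair only \emph{once}. The ``factor $2$ available'' that you invoke for the fixed points has already been spent on the multiplicity $2$ of every orbit pair $\{j,k-j\}$. So the closing ``Hence'' is a non sequitur.

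Your weaker inequality is close to trivial, since $(r*r)_k\le W^2$ for any $r\ge0$. The factor $\frac12$ is the whole content of the lemma, and the paper needs it. For example, it gives $(r_0*r_0)_3\le W^2/2\le(u+v)W$ in the proof of \eqref{eq:P14-tail}, and with $W^2$ in place of $W^2/2$ that estimate and the subsequent algebra break. Note also that your argument uses $2k\ne0$ only to separate the fixed points, whereas the sharp constant needs it in an essential way.

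The missing idea is a second use of evenness, through negation of pairs. The map $\{j,l\}\mapsto\{-j,-l\}$ sends each orbit pair $\{j,k-j\}$ (sum $k$) to a pair with sum $-k$ carrying the same weight, $r_{-j}r_{j-k}=r_jr_{k-j}$. The numbers $k$, $-k$ and $0$ are pairwise distinct; this is where $k\ne0$ and $2k\ne0$ really enter. So the orbit pairs, their negatives, and your fixed-point pairs $\{j,-j\}$ form three mutually disjoint families of distinct unordered pairs. Splitting each contribution $2r_jr_{k-j}$ as $r_jr_{k-j}+r_{-j}r_{j-k}$ gives $(r*r)_k\le\sum_{\{j,l\}:\,j\ne l}r_jr_l$, as required.

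This repaired version is the paper's proof in combinatorial form. The paper writes $2(r*r)_k=(r*r)_k+(r*r)_{-k}=\sum_jr_j(r_{j-k}+r_{j+k})$. It then bounds $r_{j-k}+r_{j+k}\le W-r_j$, because $j-k$, $j+k$ and $j$ are three distinct indices.
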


\begin{proof}
Evenness gives
\[
 2(r*r)_k=\sum_jr_j(r_{j-k}+r_{j+k}).
\]
For each $j$, the two indices $j-k,j+k$ are distinct and both differ
from $j$, so the last sum is at most $\sum_jr_j(W-r_j)$.
\end{proof}

The following argument treats both parities.  The exceptional even
orders $16$ and $20$ will be considered in the next section.

\begin{theorem}\label{thm:P14-generic}
The support $\cP_{14}=\{0,\pm1,\pm4\}$ is impossible when $n=18,19$
or $n\ge21$.
\end{theorem}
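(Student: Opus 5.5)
Assume, for contradiction, that $a$ is feasible with positive support $\cP_{14}$, and write $a_0=A$, $a_{\pm1}=C$, $a_{\pm4}=D$, $a=p-q$, $M=\sum q$. After normalizing $A=1$, Lemma~\ref{lem:A-CD} gives $C+D<1$, and Lemma~\ref{lem:strong-mass} gives $M\le C+D+\min(C,D)$. The plan follows the pattern of Theorem~\ref{thm:short-relation} and Theorem~\ref{thm:P23}. We compute $T=P+P=\{0,\pm1,\pm2,\pm3,\pm4,\pm5,\pm8\}$, take $R=G\setminus T$, and compare $\sum_R a*a$, or a weighted version $\ip{w}{a*a}$, against zero. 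For $n\ge21$, and for $n=18,19$ after checking the coincidences ($\pm8$ may coincide with other points, and $R$ becomes small), we will have $p*p|_R=0$ apart from the coefficients that $p*p$ puts on the weighted boundary points. Negative mass sitting in $R$ is kept in $R$ by the zero mode with weight $1$. Negative mass at the boundary points $\pm2,\pm3,\pm5,\pm8$ is pushed into $R$ by modes $\pm1$ or $\pm4$, with weight $C$ or $D$. The awkward positions are $\pm2,\pm3$: from $2$, only $2+4=6$ and possibly $2-4=-2$ leave $T$, so the weight there is only $D$. From $3$, the moves $3+4=7$ and $3-4=-1$ give only $D$ again.

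The crude transport bound is $\sum_R p*q\ge\min(C,D)M$, and that is too weak. So I would split the negative mass into three parts: $y$, the mass at $\pm2,\pm3$ (the ``inner'' boundary); $w$, the mass already in $R$; and the rest. I would then use a weight $w_0=\one_R+t\,\one_{H}$ that puts partial weight on a few inner points $H\subset T$, as in Theorem~\ref{thm:P23}. This costs a small multiple of the $p*p$ coefficients there, but gives inner negative mass a better transport weight. The resulting inequality, together with $\sum_R q*q\le M^2$ (sharpened via Lemma~\ref{lem:tail}, which bounds the self-convolution of the negative part by $\tfrac12(W^2-\sum q_j^2)$), should reduce after scaling to a quadratic inequality in normalized variables. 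I expect it to have exactly the shape of \eqref{eq:algebra-hyp}: $s$ is a normalized mass on one of $C,D$, $d$ is the other normalized positive weight (with $d<1/2$ from $C+D<1$ and the symmetric roles), $y$ is the inner negative mass, $w$ is the excess, and $m=2s+w\le1+2d$ is the mass bound.

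Then Lemma~\ref{lem:algebra} yields $d(1-y)<s(1-w)$. I would combine this with a second, independent inequality pointing the other way, obtained from positivity of $h$ at a well-chosen point (for instance $h(0)>h(x)$ with $x$ giving $\cos(4\theta)$ near $-1$, or the positive-definite test vector from Lemma~\ref{lem:A-CD} with shifted indices such as $e_0-e_1-e_4+e_5$ variants). That inequality should give $d(1-y)\ge s(1-w)$, which is the contradiction. The same argument covers odd $n$, since only $2k\ne0$ (needed in Lemma~\ref{lem:tail}) and the distinctness of the points of $T$ are used.

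The main obstacle is setting up the weighted transport so that the bookkeeping lands exactly on the hypotheses of Lemma~\ref{lem:algebra}, in particular identifying which quantities play $s,d,y,w$. A further difficulty is the small cases $n=18,19$, where $T$ has collisions ($8\equiv-10$ and the like) and $R$ is tiny. For those I would verify the transports by hand: for each point of $R$, check that one of $k\pm1$ or $k\pm4$ stays in $R$. If that fails, I would pass to the quotient $n=18\to C_9$ or $C_6$ via Lemma~\ref{lem:quotient} and apply Lemma~\ref{lem:three-positive}. For $n=19$, where no quotient is available, I would rely on the direct check.
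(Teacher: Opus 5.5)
Your outline gets the first half roughly right: a complementary sum over $R=G\setminus(P+P)$, fed into Lemma~\ref{lem:algebra}. The step that closes the argument is missing, though, and the mechanism you propose for it cannot supply it. You hope to get the reverse inequality $d(1-y)\ge s(1-w)$ from pointwise positivity of $h$ or from a positive-definite test vector. Both of these are linear in the spectrum. Use the paper's variables: $q_{\pm2}=u$, $q_{\pm3}=v$, $W=M-2u-2v$, $d=D/C$, $s=(u+v)/C$, $y=v/C$, $w=W/C$. Then $C^2\bigl(d(1-y)-s(1-w)\bigr)=CD-(C+D)v-Cu+(u+v)W$ contains the products $CD$ and $(u+v)W$, which no single linear constraint of that kind can produce. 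The paper obtains this quantity from positivity of $a*a$ at the interior point $3\in P+P$:
\begin{itemize}
\item $(p*p)_3=2CD$;
\item $(p*q)_3\ge Av+Cu$, with $A>C+D$ by Lemma~\ref{lem:A-CD};
\item $(q*q)_3\le2(u+v)W$, because no two of $\pm2,\pm3$ sum to $3$, the cross terms are $2u(r_0)_5+2v(r_0)_6\le(u+v)W$, Lemma~\ref{lem:tail} bounds the outer self-term by $W^2/2$, and $W<2(u+v)$.
\end{itemize}
This is where Lemma~\ref{lem:tail} is needed. It bounds a single coefficient, not $\sum_Rq*q$ as you propose.

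The first inequality also needs ingredients you have not identified. No weight $t\one_H$ is required. The paper sums over $R$ unweighted, using $\sum_Rp*q\ge(C+D)M-CB$ with $B=2u+2v$. The decisive refinement is $\sum_Rq*q\le M^2-B^2+2v^2$: among products of the inner points $\pm2,\pm3$, only $\pm3\pm3=\pm6$ lands in $R$. This yields $M(2C+2D-M)<2CB-B^2+2v^2$, which after division by $C^2$ is \eqref{eq:algebra-hyp}. It also explains why $y$ is the mass at $\pm3$ alone while $s$ is the whole inner mass, and why the normalization is by $C$, not $A$.

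Moreover, the hypotheses $d<1/2$ and $w<2s$ of Lemma~\ref{lem:algebra} are not available in advance. There is no symmetry to invoke: no automorphism exchanges $\pm1$ and $\pm4$ unless $16\equiv\pm1\pmod n$, and $C+D<A$ says nothing about $D/C$. The paper extracts both hypotheses from the complementary inequality itself. It shows first $M>2D$, then $M<2C$ using $\Delta\ge C$, then $M>4D$. Hence $C>2D$, and also $B>M/2$.

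Finally, your quotient fallback for $n=18$ would not help. $C_9$ keeps all five positive positions, and $C_6$ receives $\{0,\pm1,\pm2\}$; both are additive bases, so neither quotient gives a contradiction. The fallback is also unnecessary: the transports $2+4$, $3+4$, $5+1$, $5+4$, $8+1$, $8+4$ all land in $R$ when $n=18,19$.
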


\begin{proof}
Write
\[
 p_0=A,\qquad p_{\pm1}=C,\qquad p_{\pm4}=D,\qquad M=\sum q_k.
\]
By Lemma~\ref{lem:A-CD}, $A>C+D$.  Put
\[
 q_{\pm2}=u,\qquad q_{\pm3}=v,\qquad
 B=2u+2v,\qquad W=M-B.
\]
For the stated orders, the sumset
\[
 S=P+P=\{0,\pm1,\pm2,\pm3,\pm4,\pm5,\pm8\}
\]
has no wraparound that affects the following transports.  The shifts
$2+4=6$, $3+4=7$, $5+1=6$, $5+4=9$, $8+1=9$, and $8+4=12$
all lie outside $S$.  This is direct for $n=18,19$, and holds for
$n\ge21$ because no nonzero difference between a displayed target and
an integer representative of $S$ has magnitude greater than $20$.
On $R=G\setminus S$, boundary mass at $\pm2,\pm3$ is transported with
weight at least $D$, and all remaining negative mass with total weight
at least $C+D$.  Among the self-convolutions of the four boundary
points, only $\pm3\pm3=\pm6$ can enter $R$.  Hence
\begin{equation}
 \sum_Rp*q\ge(C+D)M-CB,
 \qquad
 \sum_Rq*q\le M^2-B^2+2v^2.
 \label{eq:P14-complement}
\end{equation}
With $\Delta=2C+2D-M$, positivity on $R$ gives
\begin{equation}
 M\Delta<2CB-B^2+2v^2.
 \label{eq:P14-main}
\end{equation}

We extract four consequences.  The weaker transport bound
$\sum_Rp*q\ge DM$ first gives $M>2D$.  Since $2v\le B$, the right side
of \eqref{eq:P14-main} is at most $2CB-B^2/2\le2C^2$; and
$\Delta\ge C$ by the proof of Lemma~\ref{lem:strong-mass}.  Thus
$M<2C$.  Because $B\le M<2C$, the function
$2Ct-t^2/2$ is increasing for $0\le t\le M$.  Applying it once more in
\eqref{eq:P14-main} yields
\[
 M(2C+2D-M)<2CM-\frac{M^2}{2},
\]
and therefore $M>4D$.  In particular $C>2D$.  Finally,
$M\Delta<2CB$ and $\Delta\ge C$ give $B>M/2$.  We have proved
\begin{equation}
 4D<M<2C,\qquad C>2D,\qquad W<B.
 \label{eq:P14-consequences}
\end{equation}

Normalize by setting
\[
 d=\frac DC,\quad x=\frac uC,\quad y=\frac vC,\quad
 s=x+y,\quad w=\frac WC,\quad m=\frac MC=2s+w.
\]
Then
\begin{equation}
 0<d<\frac12,\qquad 0\le y\le s,\qquad 0\le w<2s,\qquad
 m\le1+2d,
 \label{eq:P14-domain}
\end{equation}
where the last inequality is Lemma~\ref{lem:strong-mass}.  Inequality
\eqref{eq:P14-main} becomes
\begin{equation}
 m(2+2d-m)<4s-4s^2+2y^2.
 \label{eq:P14-normalized}
\end{equation}
Lemma~\ref{lem:algebra} now gives
\begin{equation}
 d(1-y)<s(1-w).
 \label{eq:P14-direction}
\end{equation}

Let $q=q_0+r_0$, where $q_0$ is the restriction of $q$ to
$\{\pm2,\pm3\}$.  This set has no two elements summing to $3$, and
\[
 2(q_0*r_0)_3=2u(r_0)_5+2v(r_0)_6\le(u+v)W.
\]
Here $(r_0)_5,(r_0)_6\le W/2$, since $r_0$ is even and neither
position is self-inverse in the stated orders.  Lemma~\ref{lem:tail}
also gives $(r_0*r_0)_3\le W^2/2$.
Since $W<2(u+v)$ by \eqref{eq:P14-consequences},
\begin{equation}
 (q*q)_3\le2(u+v)W.
 \label{eq:P14-tail}
\end{equation}
On the other hand,
\[
 (p*p)_3=2CD,\qquad
 (p*q)_3=Av+Cu+D(q_1+q_7)\ge Av+Cu+Dq_7.
\]
Using $A>C+D$, \eqref{eq:P14-tail}, and $(a*a)_3>0$, we obtain
\[
 0<2CD+2(u+v)W-2(C+D)v-2Cu.
\]
After division by $2C^2$, the right side is
$d(1-y)-s(1-w)$, contrary to \eqref{eq:P14-direction}.
\end{proof}

\section{Exceptional supports in small even orders}

We first give two estimates for the four exceptions in
\eqref{eq:exceptions}.  The first is a form of the parity-compression
argument used above.

\begin{lemma}\label{lem:compression-template}
Let $n=2m$, and after translating the odd part suppose that
\[
 \supp p_e=\{0,\pm t\},\qquad \supp p_o=\{0,r\}.
\]
Let the corresponding nonzero positive weights be $C$ and $D$, and put
$S=\supp(p_e*p_o)$, $R=(\Z/m\Z)\setminus S$.  Assume that $R$ is
nonempty, every negative position of $o$ can be moved to $R$ by a
positive position of $e$ of weight at least $C$, and every negative
position of $e$ can be moved to $R$ by a positive position of $o$ of
weight $D$.  Then the original five-point spectrum is impossible.
\end{lemma}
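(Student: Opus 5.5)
The plan is to repeat the argument of Theorem~\ref{thm:P12}, i.e.\ to deduce inequality \eqref{eq:P12-core} from the assumptions and then contradict Lemma~\ref{lem:strong-mass}.

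\emph{Setup.} Let $X=\sum q_e$ and $Y=\sum q_o$. By \eqref{eq:eo-positive}, translating the odd part only shifts indices, so $e*o>0$ at every point of $\Z/m\Z$. Summing over $R$ gives
\[
 0<\sum_R e*o=\sum_R p_e*p_o-\sum_R p_e*q_o-\sum_R q_e*p_o+\sum_R q_e*q_o .
\]
The first term vanishes because $R$ avoids $S$. The last term is at most $XY$.

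\emph{Transport bounds.} The transport identity stated before Lemma~\ref{lem:three-positive} applies to each factor. Each negative position $j$ of $o$ has some $i\in\supp p_e$ with $j+i\in R$ and $p_e(i)\ge C$. Each negative position of $e$ is moved into $R$ by a positive position of $o$ of weight $D$. Hence
\[
 \sum_R p_e*q_o\ge CY,\qquad \sum_R q_e*p_o\ge DX,
\]
and so
\[
 0<XY-DX-CY .
\]
This is \eqref{eq:P12-core}, up to the labelling of $C$ and $D$. It forces $X>C$ and $Y>D$.

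\emph{Mass bound.} Write $x=X-C$ and $y=Y-D$, so that $XY-DX-CY=xy-CD$. The quantity $X+Y$ is the total negative mass $M$ of the original spectrum. The positive coefficients of the original spectrum are $A$ at $0$, and, since $p_e(\pm t)=C$ and $p_o(0)=p_o(r)=D$, the values $C$ and $D$ each on an inverse pair of frequencies. The original support is thus $\{0,\pm r',\pm s'\}$ with weights $C$ and $D$, and Lemma~\ref{lem:strong-mass} gives $M\le C+D+\min(C,D)$. Therefore $x+y\le\min(C,D)$, and by AM--GM
\[
 xy\le\frac{\min(C,D)^2}{4}<CD .
\]
This contradicts $xy-CD>0$.

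The main obstacle is justifying the bookkeeping in the mass bound. One must check that the two positive odd positions $\{0,r\}$, after the translation, correspond to an inverse pair $\pm s'$ in $G$ carrying the common weight $D$. One must also check that the even pair $\pm t$ corresponds to $\pm 2t$ with weight $C$, so that Lemma~\ref{lem:strong-mass} applies with exactly these $C$ and $D$. Evenness of $a$ forces the two odd values to be equal, and the translation by $r_0$ preserves them, as in Proposition~\ref{prop:half-period}. Once this identification is made, the rest is routine.
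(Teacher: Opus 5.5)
Your proposal is correct and is essentially the paper's proof: summing $e*o>0$ over $R$ with the two transport bounds gives $0<XY-CY-DX$, which forces $X>C$ and $Y>D$, and then Lemma~\ref{lem:strong-mass} gives $x+y\le\min(C,D)$, so $xy-CD\le\min(C,D)^2/4-CD<0$. Your remark identifying $C$ and $D$ with the weights on the two inverse pairs $\pm2t$ and $\pm s'$ of the original spectrum is the bookkeeping the paper leaves implicit when it invokes Lemma~\ref{lem:strong-mass}.
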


\begin{proof}
If $X=\sum q_e$ and $Y=\sum q_o$, summing $e*o>0$ on $R$ gives
\[
 0<\sum_Re*o\le XY-CY-DX.
\]
Thus $X>C$ and $Y>D$.  Writing $x=X-C$, $y=Y-D$, we have
$x+y\le\min(C,D)$ by Lemma~\ref{lem:strong-mass}, whereas
\[
 XY-CY-DX=xy-CD
 \le\frac{\min(C,D)^2}{4}-CD<0.
\]
\end{proof}

\begin{lemma}\label{lem:quadratic-mass}
Let $C,D>0$, $0\le T\le M$, $M>0$, and
\[
 M\le C+D+\min(C,D),\qquad \Delta=2C+2D-M.
\]
Then
\begin{equation}
 M\Delta>2\max(C,D)T-T^2.
 \label{eq:quadratic-mass}
\end{equation}
\end{lemma}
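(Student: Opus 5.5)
The plan is to eliminate $T$ first and then check a one-variable inequality in $M$. Put $K=\max(C,D)$ and $s_0=\min(C,D)$, so that $s_0>0$. In this notation $\Delta=2K+2s_0-M$, and the hypothesis reads $M\le K+2s_0$. The right side of \eqref{eq:quadratic-mass} is $f(T)=2KT-T^2$. This is a concave parabola in $T$ that increases on $[0,K]$ and has maximum value $K^2$ at $T=K$. Since $0\le T\le M$, we get
\[
 f(T)\le f\bigl(\min(M,K)\bigr).
\]
It therefore suffices to prove $M\Delta>f(\min(M,K))$, and we split into two cases according to whether $M\le K$ or $M>K$.

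If $M\le K$, the bound is $f(T)\le 2KM-M^2=M(2K-M)$. On the other hand,
\[
 M\Delta=M(2K-M)+2s_0M .
\]
This exceeds $M(2K-M)$ strictly, because $M>0$ and $s_0>0$.

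If $M>K$, the bound is $f(T)\le K^2$, so we need $g(M)=M(2K+2s_0-M)>K^2$ for $K<M\le K+2s_0$. The function $g$ is concave in $M$, so on the closed interval $[K,K+2s_0]$ its minimum is attained at an endpoint. The two endpoint values are
\[
 g(K)=K(K+2s_0),\qquad g(K+2s_0)=(K+2s_0)K,
\]
and both exceed $K^2$ strictly because $s_0>0$. Hence $g(M)>K^2$ on the whole interval, which gives \eqref{eq:quadratic-mass}.

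The only delicate point is the second case, which is exactly where the mass bound $M\le C+D+\min(C,D)$ of Lemma~\ref{lem:strong-mass} is needed: the right endpoint $M=K+2s_0$ gives $g(M)=K(K+2s_0)$, which is still strictly above $K^2$. Strictness in both cases comes from $s_0>0$ and $M>0$.
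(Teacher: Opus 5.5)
Your proof is correct and is essentially the paper's argument. Both rest on the concavity of $M\mapsto M(2K+2s_0-M)$ and an endpoint check in which the mass bound $M\le K+2s_0$ supplies the right endpoint, where the value is $K(K+2s_0)$.

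The only difference is the order of elimination. You first maximize $2KT-T^2$ over $T\in[0,M]$ and then split on $M\le K$ versus $M>K$. The paper instead fixes $T$ and minimizes over $M\in[T,K+2s_0]$. Its left endpoint $M=T$ gives margin $2s_0T$, and its right endpoint gives margin $2Ks_0+(T-K)^2$, so it needs no case split beyond the trivial case $T=0$.
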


\begin{proof}
Put $H=\max(C,D)$ and $L=\min(C,D)$.  The concave
quadratic $g(x)=x(2H+2L-x)$ has its minimum on
$[T,H+2L]$ at an endpoint.  Now
\[
 g(T)-(2HT-T^2)=2LT>0
\]
when $T>0$, while
\[
 g(H+2L)-(2HT-T^2)=2HL+(T-H)^2>0.
\]
The case $T=0$ is immediate.
\end{proof}

\begin{proposition}\label{prop:exceptions}
The supports $\cP_{25}$ and $\cP_{45}$ are infeasible in $C_{20}$.
The support $\cP_{34}$ is infeasible in $C_{24}$, and $\cP_{25}$ is
infeasible in $C_{30}$.
\end{proposition}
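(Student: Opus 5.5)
The plan is to treat the four cases separately, using the two templates just proved. Where parity compression produces the shape of Lemma~\ref{lem:compression-template}, we only verify its transport hypotheses. Otherwise we combine a complementary-sum estimate with Lemma~\ref{lem:quadratic-mass}. By Lemma~\ref{lem:parity}, each support has one odd and one even frequency, so parity compression \eqref{eq:parity-compression} is always available.

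\emph{Case $\cP_{25}$ in $C_{20}$ and in $C_{30}$.} Here $m=10$ or $15$, $s=2$ is even and $r=5$ is odd. The even part has $\supp p_e=\{0,\pm1\}$ with weight $D$ at $\pm1$. The odd part has positive positions $2j+1=\pm5$, that is, $j\in\{2,-3\}$, and a translation by $3$ puts them at $\{0,5\}$. Then $p_e*p_o$ is supported on $S=\{-1,0,1,4,5,6\}$. When $m=15$, $R$ contains $\{7,\dots,13\}$, and every position outside $S$ is adjacent, under shifts $\pm1$ (weight $D$ from $e$), to some point of $R$. Negative mass of $e$ is moved by shifts $0$ or $5$ (weight $C$). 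Since $\{0,5\}$ is not symmetric, the template must be checked with the roles of $C$ and $D$ interchanged; the proof is symmetric in $(C,D)$, so this costs nothing.

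For $m=10$, the shift by $5$ is self-inverse, so $p_o$ is effectively supported on a subgroup coset. I will check directly that each of the $4$ points of $R=\{2,3,7,8\}$ can be reached from each negative position. Where this fails, I fall back on the $A$-weight from the zero mode as in Theorem~\ref{thm:P23}.

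\emph{Case $\cP_{45}$ in $C_{20}$.} The even part has $\supp p_e=\{0,\pm2\}$ in $\Z/10$. The odd part is as before, $\{0,5\}$ after translation. The sum is $S=\{0,\pm2,5,3,7\}$, so $R=\{1,4,6,8,9\}$. Odd positions of $o$ are moved by $\pm2$, and even positions of $e$ by $5$. Here the template may fail, because $\pm2$ preserves parity in $\Z/10$ and even negative $o$-positions outside $S$ may not reach $R$ with that weight. In that case I use the zero mode, with weight $A>D$, for mass already in $R$. I then bound the total complementary sum by $M^2-$(transported mass), with $T$ the mass that must be moved at weight only $\max(C,D)$. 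This produces an inequality $M\Delta\le 2\max(C,D)T-T^2$, which contradicts Lemma~\ref{lem:quadratic-mass}.

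\emph{Case $\cP_{34}$ in $C_{24}$.} We have $m=12$ and $\supp p_e=\{0,\pm2\}$. The odd positions are $\pm3$, giving $j\in\{1,-2\}$; translating by $2$ sends them to $\{0,3\}$. Then $S=\{0,\pm2,3,1,5\}$ and $R=\{4,6,7,8,9,11\}$, which is large. I expect Lemma~\ref{lem:compression-template} to apply directly after checking the translates case by case.

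\emph{Main obstacle.} I expect the main obstacle to be $C_{20}$, where the element $m=10$ of order two collapses translates and $R$ is small. There the quadratic-mass route, with a careful choice of $T$ as the boundary mass lacking a good transport, is the step I would try first.
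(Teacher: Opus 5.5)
\textbf{The $C_{24}$ and $C_{30}$ cases are fine.} Your treatment of $\cP_{34}$ in $C_{24}$ and of $\cP_{25}$ in $C_{30}$ is the paper's. The compressed supports you find are correct: $\{0,\pm2\}$ and $\{0,3\}$ modulo $12$, and $\{0,\pm1\}$ and $\{0,5\}$ modulo $15$. Lemma~\ref{lem:compression-template} applies once you write out the transports of the negative positions lying in $S$:
\begin{itemize}
\item at $m=12$: $1,2,5,10\mapsto11,4,7,8$ by $\pm2$, and $1,3,5\mapsto4,6,8$ by $3$;
\item at $m=15$: $1,4,6,14\mapsto2,3,7,13$ by $\pm1$, and $4,5,6\mapsto9,10,11$ by $5$.
\end{itemize}

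\textbf{The order-$20$ cases have a genuine gap, and compression cannot close it.} Take weight $C$ on the even pair and $D$ on $\pm5$. For $m=10$ the compressed odd support $\{0,5\}$ is a subgroup of $\Z/10\Z$, so $S=\supp p_e+\{0,5\}$ is a union of its cosets. Consider the negative mass of $e$ lying in $S$: positions $4,5,6$ for $\cP_{25}$ and $3,5,7$ for $\cP_{45}$, i.e.\ original frequencies $\pm8,10$ and $\pm6,10$. This mass is never moved into $R$ by $p_o$, so its weight in $\sum_Rq_e*p_o$ is zero, not $D$. The template bound $XY-CY-DX$ is therefore unavailable. What remains is $XY-CY-2DX_R$, where $X_R$ is the $q_e$-mass in $R$. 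This is positive for masses allowed by Lemma~\ref{lem:strong-mass}, e.g.\ $X_R=0$, $X>C$, $Y>0$.

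Your fallback cannot fix this. The zero-mode weight $A$ sits in $p_e$, so it transports only $q_o$, and every $q_o$ transport already succeeds. So your diagnosis for $\cP_{45}$ blames the wrong factor. Also, $8=-2\in S$ there, so $R=\{1,4,6,9\}$, not $\{1,4,6,8,9\}$.

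\textbf{How to repair it.} The missing information lies in the even coefficients of $a*a$, which $e*o$ discards. The mass $q_{10}$ is carried to $\mp8$ (resp.\ $\pm6$) by the even modes $\pm2$ (resp.\ $\pm4$), and $q_{\pm8}$ (resp.\ $q_{\pm6}$) already lies outside $P+P$. Your closing appeal to Lemma~\ref{lem:quadratic-mass} is the right tool, but only in $C_{20}$ itself. There $R=G\setminus(P+P)$ contains both parities, and Lemma~\ref{lem:A-CD} gives $A>C+D$.
\begin{itemize}
\item For $\cP_{25}$: $R=\{\pm1,\pm6,\pm8,\pm9\}$. The boundary positions $\pm3,\pm4,\pm7$ have weight at least $C+D$, and $10$ has weight $2C$. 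With $U=q_{10}$ you get $\sum_Rp*q\ge(C+D)M+(C-D)U$. Since $10+10=0$, also $\sum_Rq*q\le M^2-U^2$. Hence $\sum_Ra*a\le-M\Delta-2(C-D)U-U^2$, which is negative, using Lemma~\ref{lem:quadratic-mass} with $T=U$ when $D>C$.
\item For $\cP_{45}$: $R=\{\pm2,\pm3,\pm6,\pm7\}$. The positions $\pm1,\pm9$ have weight $C+D$, $\pm8$ have weight $2D$, and $10$ has weight $2C$. Put $Z=q_8+q_{12}$; since $\pm8\pm8\in\{0,\pm4\}$ and $10+10=0$ avoid $R$, you get $\sum_Ra*a\le-M\Delta-2(D-C)(Z-U)-Z^2-U^2$. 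Lemma~\ref{lem:quadratic-mass} with $T=Z$ if $C\ge D$, and $T=U$ if $D>C$, makes this negative.
\end{itemize}
A single choice of $T$ does not suffice, because which of $\pm8$ and $10$ falls short of $C+D$ depends on the order of $C$ and $D$.
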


\begin{proof}
For $n=24,(r,s)=(3,4)$, after compression modulo $12$ one may take
\[
 \supp p_e=\{0,\pm2\},\quad \supp p_o=\{0,3\},\quad
 S=\{0,1,2,3,5,10\}.
\]
For $n=30,(r,s)=(2,5)$, after compression modulo $15$ one may take
\[
 \supp p_e=\{0,\pm1\},\quad \supp p_o=\{0,5\},\quad
 S=\{0,1,4,5,6,14\}.
\]
The boundary transports required by Lemma~\ref{lem:compression-template}
are, respectively,
\[
\begin{array}{c|l|l}
m&q_o\text{ boundary}&q_e\text{ boundary}\\ \hline
12&1\mapsto11,\ 2\mapsto4,\ 5\mapsto7,\ 10\mapsto8
  &1\mapsto4,\ 3\mapsto6,\ 5\mapsto8\\
15&1\mapsto2,\ 4\mapsto3,\ 6\mapsto7,\ 14\mapsto13
  &4\mapsto9,\ 5\mapsto10,\ 6\mapsto11.
\end{array}
\]
The arrows use, in order, shifts by $\pm2,3$ and by $\pm1,5$.
Only boundary positions lying in $S$ are displayed.  Negative mass
already in $R$ stays in $R$ after convolution with the positive
zero-position in the other factor, so the table checks every transport
required by Lemma~\ref{lem:compression-template}.

It remains to exclude $\cP_{25}$ and $\cP_{45}$ in $C_{20}$.
Give the even-frequency pair weight $C$ and the pair
$\{\pm5\}$ weight $D$.  Lemma~\ref{lem:A-CD} gives $A>C+D$ in
both cases, so negative mass already outside $P+P$ has zero-mode
weight greater than $C+D$.  In both cases $p*p$ vanishes on the nonempty
complement
$R=G\setminus(P+P)$, so feasibility forces $M>0$.
Put $\Delta=2C+2D-M$.  For
$P=\{0,\pm2,\pm5\}$, let
$R=G\setminus(P+P)=\{\pm1,\pm6,\pm8,\pm9\}$ and $U=q_{10}$.
Direct transport gives
\[
 \sum_Rp*q\ge(C+D)M+(C-D)U,\qquad
 \sum_Rq*q\le M^2-U^2.
\]
Thus
\[
 \sum_Ra*a\le-M\Delta-2(C-D)U-U^2<0,
\]
using Lemma~\ref{lem:quadratic-mass} when $D>C$.

For $P=\{0,\pm4,\pm5\}$, one has
$R=\{\pm2,\pm3,\pm6,\pm7\}$.  Put
$Z=q_8+q_{12}$ and $U=q_{10}$.  The exact boundary counts are
\[
 \sum_Rp*q\ge(C+D)M+(D-C)Z+(C-D)U,
\]
and
\[
 \sum_Rq*q\le M^2-Z^2-U^2.
\]
The resulting upper bound
\[
 -M\Delta-2(D-C)(Z-U)-Z^2-U^2
\]
is negative by Lemma~\ref{lem:quadratic-mass}, whether $C\ge D$ or
$D>C$.
\end{proof}

The exceptional wraparound in $\cP_{14}$ at orders $16$ and $20$ can
be handled by a weighted version of the complementary sum.

\begin{theorem}\label{thm:P14-small}
The support $\cP_{14}$ is impossible in $\Z/16\Z$ and in
$\Z/20\Z$.
\end{theorem}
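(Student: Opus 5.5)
We reuse the complementary-sum argument from Theorem~\ref{thm:P14-generic} and repair it by weighting where wraparound interferes. Keep the notation there: $p_0=A$, $p_{\pm1}=C$, $p_{\pm4}=D$, $M=\sum q_k$, $q_{\pm2}=u$, $q_{\pm3}=v$, and $\Delta=2C+2D-M$. Lemma~\ref{lem:A-CD} still gives $A>C+D$ for $n\ge10$, and Lemma~\ref{lem:strong-mass} gives $M\le C+D+\min(C,D)$.

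Next we find where the generic transport argument fails.

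\emph{Order $20$.} The sumset is $S=\{0,\pm1,\pm2,\pm3,\pm4,\pm5,\pm8\}$. Its complement is $R=\{\pm6,\pm7,\pm9,10\}$, and $\pm8$ now sits next to the self-inverse point $10$. The problem is the transports $8+4=12\equiv-8\in S$ and $5+4=9$, which are fine. So the case $n=20$ should only need a recount of which transports land in $R$, with $10$ handled as a single point. The term $(q*q)$ on $R$ picks up extra contributions such as $5+5=10$ and $\pm3\pm3$.

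\emph{Order $16$.} Here $S=\{0,\pm1,\dots,\pm5,8\}$, so $R=\{\pm6,\pm7\}$ has only four points. Position $8$ can leave $S$ only via $\pm1$ into $\pm7$, with weight $C$. Position $\pm5$ goes via $+1$ to $6$ or via $+4$ to $9\equiv-7$, giving weight $C+D$. Position $\pm3$ goes to $7$ via $+4$ (weight $D$) and to $-6\cdot$? no: $3-$ nothing. Position $\pm2$ goes to $6$ via $+4$.

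For each order I would write the inequality $\sum_R a*a>0$ in the form $M\Delta<\text{(boundary deficit)}$, as in \eqref{eq:P14-main}, with additional deficit terms from positions like $8$ (or $10$) that gain only weight $C$ instead of $C+D$.

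Where a plain sum is too weak, I would use a weighted test vector $w=\one_R+t\one_H$, as in Theorem~\ref{thm:P23}. Here $H$ is a small subset of $S$ where $p*p$ is small, for instance $H=\{\pm5\}$ or $\{8\}$, where $(p*p)_{\pm5}=2CD$ and $(p*p)_8=D^2$. The parameter $t$ is chosen so that the boundary masses at $\pm2,\pm3$ and at $8$ or $10$ are transported with total weight at least $D$ or $C$, as needed. The same four consequences should then follow: $M>2D$, $M<2C$, $C>2D$, and $B>M/2$, i.e. \eqref{eq:P14-consequences}. After that, the reduction to the normalized inequality \eqref{eq:P14-normalized} and Lemma~\ref{lem:algebra} goes through, possibly with an extra $O(d^2)$ term that has to be absorbed.

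The final step reuses the coefficient $(a*a)_3$. In $C_{16}$ and $C_{20}$ we need to recheck that $(r_0)_5$ and $(r_0)_6$ are at most $W/2$ and that Lemma~\ref{lem:tail} still applies. Position $3$ satisfies $2\cdot3\ne0$, and the positions $5,6$ are not self-inverse in either order, so this step should carry over verbatim. The extra term $Dq_7$, where $q_7$ is now $q_{-9}$ or similar, is simply dropped.

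The main obstacle is $n=16$, where $R$ has only four points. The weighted sum has to make up the weight lost at position $8$ and the possible collision of $\pm3\pm3$ with $\pm6$ inside $R$. My first attempt would be weights supported on $R\cup\{8\}\cup\{\pm5\}$, with $t$ tuned so that the $p*p$ cost is at most $CD$-type terms. I would then bound that cost using $A>C+D$ and Lemma~\ref{lem:quadratic-mass}. If this still falls short, the fallback is to add the coefficient inequality $(a*a)_8>0$ as a second independent constraint and combine the two inequalities linearly.
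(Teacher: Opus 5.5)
Your proposal is a plan rather than a proof, and it misreads the defect it is trying to repair. The only wraparound that matters is at $8$ in $C_{16}$ and at $\pm8$ in $C_{20}$. There the shift by $+4$ (resp.\ $-4$ for $-8$) falls back into $P+P$. You observe $8+4\equiv-8\in S$ in $C_{20}$ but conclude that order $20$ needs only a recount; in fact it has exactly the same defect as order $16$.

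You also undercount what survives. Both $8+1$ and $8-1$ lie in $R$ (they are $\mp7$ in $C_{16}$, and $9,7$ in $C_{20}$), so this mass is transported with total weight $2C$, not $C$. Moreover, $10\in R$ in $C_{20}$ is not a deficit position at all.

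With the correct count, the proof splits into the cases $C\ge D$ and $D>C$. If $C\ge D$, then $2C\ge C+D$ and the bounds \eqref{eq:P14-complement} hold unchanged. The proof of Theorem~\ref{thm:P14-generic} then applies verbatim, including the position-$3$ step you correctly re-check. This split does not appear in your proposal.

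The genuine gap is the case $D>C$, where your plan cannot work as stated. Recovering \eqref{eq:P14-consequences} is not an available mechanism. That chain starts from $\sum_Rp*q\ge DM$, which fails once $2C<D$. It also assumes the deficit is only $CB$, ignoring the extra $(D-C)$ lost at the wraparound positions. And it ends with $C>2D$, so asserting that the consequences ``should follow'' presupposes the contradiction you need.

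The weighted vector with $H=\{8\}$ does not fix this as it stands. It costs $t(p*p)_8=2tD^2$ in $C_{16}$ (not $D^2$, since $8=-8$). This exceeds the corresponding gain $2tAq_8$ at position $8$ whenever $Aq_8<D^2$, and nothing in your plan controls that.

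What is needed, and what the paper does, is to keep the wraparound mass $Z$ ($q_8$, resp.\ $q_8+q_{-8}$) as a separate variable. Then $\sum_Rp*q\ge(C+D)M-CB-(D-C)Z$. The products among $\{\pm2,\pm3\}$ and the wraparound positions that land in $R$ total at most $2BZ+B^2/2$, whence $\sum_Rq*q\le M^2-B^2/2-Z^2$. After dividing by $C^2$ this gives $0<m^2-2(1+d)m+2b-b^2/2+2(d-1)z-z^2$, with $d>1$, $b+z\le m\le d+2$ and $m>2\min(d,2)$. An explicit maximization in $b,z$ shows the right side is negative. Without this, or a completed version of your $(a*a)_8$ fallback, the proposal does not prove the theorem.
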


\begin{proof}
Use the notation
\[
 p_0=A,\qquad p_{\pm1}=C,\qquad p_{\pm4}=D,\qquad M=\sum q_k.
\]
We have $A>C+D$ and $M\le C+D+\min(C,D)$.
Let $R=G\setminus(P+P)$.

Suppose first that $C\ge D$.  Put
\[
 q_{\pm2}=u,\quad q_{\pm3}=v,\quad B=2u+2v,\quad W=M-B.
\]
The boundary points $\pm2,\pm3$ enter
$R$ with weight $D$, the points $\pm5$ with weight $C+D$, and the
points $\pm8$ with weight $2C\ge C+D$.  Thus the two estimates
\eqref{eq:P14-complement} remain valid, even though $8=-8$ when
$n=16$.  The proof of Theorem~\ref{thm:P14-generic}, including
\eqref{eq:P14-consequences}--\eqref{eq:P14-direction} and the
position-$3$ estimate, applies verbatim and gives a contradiction.

It remains to suppose $D>C$.  Put $q_{\pm2}=u$, $q_{\pm3}=v$, and
\[
 B=2(u+v),\qquad Z=\sum_{Z_n}q_k,
 \qquad Z_{16}=\{8\},\quad Z_{20}=\{\pm8\}.
\]
The positions $\pm2,\pm3$ have transport weight $D$, the positions in
$Z_n$ have weight $2C$, and the positions $\pm5$ have weight $C+D$.
Mass already in $R$ has the zero-mode weight $A>C+D$.  Thus, in both
orders,
\begin{equation}
 \sum_Rp*q\ge DB+2CZ+(C+D)(M-B-Z)
 =(C+D)M-CB-(D-C)Z.
 \label{eq:smallP14-transport}
\end{equation}

Let $K=\{\pm2,\pm3\}\cup Z_n$.  The products $q_iq_j$ with
$i,j\in K$ and $i+j\in R$ have total $4uZ+2v^2$ at order $16$.
At order $20$, if
$q_{\pm8}=z_8$, their total is $8uz_8+4vz_8+2v^2$.
Both quantities are at most $2BZ+B^2/2$, since $B=2(u+v)$ and
$Z=2z_8$ in the second case.  Removing all products between positions
in $K$, of total mass $(B+Z)^2$, and restoring those that enter $R$
therefore gives
\begin{equation}
 \sum_Rq*q\le M^2-(B+Z)^2+2BZ+\frac{B^2}{2}
 =M^2-\frac{B^2}{2}-Z^2.
 \label{eq:smallP14-qq}
\end{equation}

Normalize by
\[
 d=\frac DC>1,\quad m=\frac MC,\quad
 b=\frac BC,\quad z=\frac ZC.
\]
Here $b+z\le m$.  Substitution of
\eqref{eq:smallP14-transport} and \eqref{eq:smallP14-qq} into
$\sum_Ra*a>0$ yields, for both orders,
\begin{equation}
 0<m^2-2(1+d)m+2b-\frac{b^2}{2}
      +2(d-1)z-z^2.
 \label{eq:smallP14-normalized}
\end{equation}
The least transport weight is $\min(D,2C)$, and
$\sum_Rq*q\le M^2$; positivity therefore forces
\begin{equation}
 2\min(d,2)<m.
 \label{eq:smallP14-lower}
\end{equation}
The strong mass bound gives $m\le d+2$.

We finish with an elementary maximization.  If
\[
 d>1,\quad2\min(d,2)<m\le d+2,\quad b,z\ge0,\quad b+z\le m,
\]
then
\begin{equation}
 m^2-2(1+d)m+2b-\frac{b^2}{2}+2(d-1)z-z^2<0.
 \label{eq:smallP14-max}
\end{equation}
Indeed, let
$\Phi(b,z)=2b-b^2/2+2(d-1)z-z^2$.  If $m\ge d+1$, then
$\Phi\le2+(d-1)^2$.  Writing $m=d+1+r$, $0\le r\le1$, the left side
of \eqref{eq:smallP14-max} is at most $r^2+2-4d<0$.
If $m<d+1$, the maximum of $\Phi$ under $b+z\le m$ lies on
$b+z=m$: an interior maximum would require $b\ge2$ and $z\ge d-1$,
contrary to $b+z<m<d+1$.  Put $z=m-b$ and
$\rho=m-d+2$.  The expression in \eqref{eq:smallP14-max} becomes
\[
 -4m+2\rho b-\frac32b^2.
\]
It is negative if $\rho\le0$; if $\rho>0$, its maximum is
$-4m+2\rho^2/3<-8+6<0$, because $m>2$ and $\rho<3$.
This proves \eqref{eq:smallP14-max}, contradicting
\eqref{eq:smallP14-normalized}.
\end{proof}

\begin{corollary}\label{cor:n18}
For every even $n\ge18$, one has $\kappa_2(n)\ge6$.
\end{corollary}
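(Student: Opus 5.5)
Since Theorem~\ref{thm:no-four-even} already gives $\kappa_2(n)\ge5$ for even $n\ge10$, the corollary reduces to excluding a feasible spectrum with exactly five positive coefficients in even order $n\ge18$. Suppose such a spectrum $a$ exists. By Proposition~\ref{prop:six-families}, after an automorphism of $G$ (which preserves feasibility by the remark following \eqref{eq:kappa-def}), its positive support is one of the six sets in \eqref{eq:six-families}, or one of the four pairs listed in \eqref{eq:exceptions}. Exceptions with $n<18$ cannot occur here, and in fact none do: the exceptional orders are $20,24,30$. The plan is to go through this finite list and cite the exclusion result already proved for each case, checking in every case that the order hypothesis is satisfied once $n\ge18$.

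The cases are handled as follows.
\begin{enumerate}
\item The support $\cP_{12}$ is excluded by Theorem~\ref{thm:P12}, which needs only even $n\ge10$.
\item The support $\cP_{23}$ is excluded by Theorem~\ref{thm:P23}, valid for even $n\ge18$. This is exactly where the threshold $18$ is used.
\item The support $\cP_{14}$ is excluded by Theorem~\ref{thm:P14-generic} for $n=18$ and all $n\ge21$. The only even order in between is $n=20$, which is covered by Theorem~\ref{thm:P14-small}.
\item The three half-period families are excluded by Proposition~\ref{prop:half-period}, where $n=2m$:
\begin{itemize}
\item $\{0,\pm1,m\pm1\}$ needs $m\ge6$, which holds.
\item $\{0,\pm2,m\pm4\}$ occurs only for odd $m$ and needs $m\ge11$. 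Odd $m$ with $2m\ge18$ means $m\ge9$, so the single order to check separately is $m=9$, i.e.\ $n=18$.
\item $\{0,\pm1,m\pm2\}$ occurs only for even $m$ and needs $m\ge10$. Since $m\ge9$ and $m$ is even, $m\ge10$ automatically.
\end{itemize}
\item The four exceptions in \eqref{eq:exceptions} are excluded by Proposition~\ref{prop:exceptions}.
\end{enumerate}

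The one point needing care is the family $\{0,\pm2,m\pm4\}$ at $n=18$, where $m=9$ and $m\pm4\equiv\pm5\pmod{18}$. Here the support is $\{0,\pm2,\pm5\}$, which may not literally be covered by the stated range of Proposition~\ref{prop:half-period}. I would first try to show it is automorphic to an already excluded support. Multiplication by the unit $7$ modulo $18$ sends $2\mapsto14\equiv-4$ and $5\mapsto35\equiv-1$. So the support becomes $\{0,\pm1,\pm4\}=\cP_{14}$, which Theorem~\ref{thm:P14-generic} excludes at $n=18$.

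If that identification failed, the fallback would be to rerun the compression argument of Proposition~\ref{prop:half-period} for $m=9$. This means checking that the complement of $S=\{-3,-2,-1,1,2,3\}$ in $\Z/9\Z$, namely $\{0,\pm4\}$, still receives every required transport. I expect this bookkeeping, ensuring every residue class $n\ge18$ falls inside the hypotheses of some earlier exclusion, to be the only real obstacle.

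Once every support is excluded, no feasible spectrum has five positive coefficients, and $\kappa_2(n)\ge6$ follows.
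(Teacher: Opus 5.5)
Your proof is correct and follows the paper's argument essentially verbatim. You reduce to five positive coefficients via Theorem~\ref{thm:no-four-even}, check each support from Proposition~\ref{prop:six-families} against its cited exclusion, and handle the lone uncovered case $\{0,\pm2,\pm5\}$ at $n=18$ by an automorphism onto $\cP_{14}$; the paper uses the unit $11$ where you use $7\equiv-11\pmod{18}$, which gives the same identification, so your fallback compression argument is unnecessary.
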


\begin{proof}
Theorem~\ref{thm:no-four-even} excludes four or fewer positive
coefficients.  If a feasible spectrum had five positive coefficients,
Proposition~\ref{prop:six-families} would put its support into
\eqref{eq:six-families} or \eqref{eq:exceptions}.
Theorems~\ref{thm:P12}, \ref{thm:P23}, \ref{thm:P14-generic}, and
\ref{thm:P14-small} exclude the first three families.
Proposition~\ref{prop:half-period} excludes the other three, except
for the fifth family at $n=18$.  In that case the support is
$\{0,\pm2,\pm5\}$, and multiplication by the unit $11$ sends it to
$\cP_{14}$, already excluded.  Finally, Proposition~\ref{prop:exceptions}
excludes the four supports in \eqref{eq:exceptions}.
\end{proof}

\section{The last support in order sixteen}\label{sec:sixteen}

In order $16$, Proposition~\ref{prop:six-families} gives
$\cP_{12}$, $\cP_{23}$, $\cP_{14}$, and $\{0,\pm1,\pm6\}$.
Multiplication by the unit $3$ sends the last support to $\cP_{23}$.
After Theorems~\ref{thm:P12} and \ref{thm:P14-small}, only
$\cP_{23}$ remains.

\begin{theorem}\label{thm:n16-P23}
There is no feasible spectrum on $\Z/16\Z$ with positive support
$\cP_{23}=\{0,\pm2,\pm3\}$.
\end{theorem}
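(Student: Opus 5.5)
Theorem~\ref{thm:P23} needs $m\ge9$, and only two things use it: the compressed complement $R=\{3,\ldots,m-4\}$ is nonempty, and each $j\in R$ can be moved back into $R$ by $-2$ or $+1$. The plan is to rerun that proof at $m=8$ and see exactly where it fails.

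We keep the parity compression \eqref{eq:parity-compression} on $\Z/8\Z$. Then $p_e$ is supported on $\{0,\pm1\}$ with weights $A,D$, and $p_o$ on $\{-2,1\}$ with weight $C$. The set $S=\{-3,\dots,2\}$ is $\{0,1,2,5,6,7\}$ modulo $8$, so $R=\{3,4\}$. This is nonempty, but the transports out of $R$ now wrap around.

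We check the transports one at a time.
\begin{itemize}
\item Negative $o$-mass at $3$ or $4$ is kept in $R$ by the zero mode of $p_e$, with weight $A$.
\item Negative $o$-mass at the boundary points $2$ and $5=-3$ enters $R$ through the shifts $\pm1$, with weight $D$.
\item Negative $e$-mass at $j\in R$ needs $j-2$ or $j+1$ in $R$. This works for $j=3$ (via $3+1=4$) but fails for $j=4$, since $2,5\notin R$.
\end{itemize}
So the only defect relative to Theorem~\ref{thm:P23} is the $e$-mass at the compressed position $4$, that is, $q_8$ in the original indexing. I would put $V=q_8$ and redo the weighted sum $\ip{w_0}{e*o}$ with an enlarged weight. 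A natural choice is $w=\one_R+t\one_H+t'\one_{\{2,5\}}$, so that $q_e(4)$ picks up weight $t'C$ from positions $4+1=5$ and $4-2=2$.

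An alternative for the position-$8$ mass is the order-two quotient. The element $8$ is fixed by inversion, and reducing modulo $8$ (Lemma~\ref{lem:quotient}) sends $\cP_{23}$ to $\{0,\pm2,\pm3\}$ in $C_8$. That set is not excluded, so the quotient alone will not finish the proof. I would instead try the direct inequalities:
\begin{itemize}
\item $h(0)>h(8)$ gives $\sum_{k\ \mathrm{odd}}a_k<0$, which bounds odd negative mass from below by $4C$. (The parity computation in Lemma~\ref{lem:parity} says the same thing.)
\item Lemma~\ref{lem:strong-mass} gives $M\le C+D+\min(C,D)$.
\end{itemize}
Combined, these force $D$ or $C$ to dominate, and the even mass, including $V$, is then small.

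The concrete plan is as follows.
\begin{enumerate}
\item Derive $Y>2C$ from $h(8)>0$, so that $X<A$ as in Lemma~\ref{lem:parity}.
\item Use the unweighted sum over $R$. With $F=XY-CX-DY$, it gives $0<F+DU+CV$, where $U$ is the $q_o$-mass on $H$.
\item Use the weighted sum with $t=D/(A+D)$ on $H$, plus a weight on $\{2,5\}$ tuned so that $p_e*p_o$ contributes at most $O(CD^2/(A+D))$.
\item Combine these with $A>C+D$ from Lemma~\ref{lem:A-CD}, which holds since $n\ge10$, and with the strong mass bound, reducing to a two- or three-variable quadratic maximization in the style of \eqref{eq:smallP14-max}.
\end{enumerate}

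The main obstacle I expect is the penalty term $CV$. Mass at $8$ interacts with itself, since $8+8=0$, and with other negative mass on $R$, so the estimate $\sum_R q_e*q_o\le XY$ must be sharpened slightly. I would try to bound $V$ using position $8$ of $a*a$ directly, or using $(a*a)_4>0$, which sees $q_8$ through $p_{-4}$-free products. If the inequality still closes only barely, I would fall back on a case split between $C\ge D$ and $D>C$, as in Theorem~\ref{thm:P14-small}.
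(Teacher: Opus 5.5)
\textbf{There is a genuine gap.} Your outline stops where the proof has to happen. You identify the defect $CV$ with $V=q_8$, but you never control it, and the tools you plan to use cannot control it.

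Every compressed sum $\langle w,e*o\rangle$ with $w\ge0$ is a nonnegative combination of the odd coefficients of $a*a$. Your $R=\{3,4\}$, $H$ and $\{2,5\}$ are the original positions $\pm7$, $\pm1$, $\pm5$. Now take
\[
a_0=6/5,\quad a_{\pm2}=2/5,\quad a_{\pm3}=1/20,\quad a_{\pm1}=-1/100,\quad a_8=-1/10,
\]
with all other coefficients zero. This spectrum satisfies every ingredient of your steps 2--4:
\begin{enumerate}
\item $16h\ge 9/50>0$.
\item $h(0)>h(x)$ for all $x\ne0$; a direct check shows the smallest margin is at $x=\pm1$. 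So the strong mass bound holds.
\item $A>C+D$ and $Y=1/50<2C$.
\item $(a*a)_1=1/125$, $(a*a)_3=14/125$, $(a*a)_5=3/100$, $(a*a)_7=1/500$ are all positive, and so is $(a*a)_4=159/1000$.
\end{enumerate}
Yet $(a*a)_8=-2Az=-6/25<0$.

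So steps 2--4 cannot produce a contradiction. This holds for any choice of $t,t'$ and any sharpening of the bound on $\sum_Rq_e*q_o$. Position $8$ must enter essentially, and you mention it only as something you "would try". Position $4$, your other suggestion, is positive in this example.

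Step 1 also has the sign reversed. Since $16\bigl(h(0)-h(8)\bigr)=2\sum_{k\ \mathrm{odd}}a_k$, Lemma~\ref{lem:basic} gives $\sum_{k\ \mathrm{odd}}a_k>0$, i.e.\ $Y<2C$. That is the opposite of the $Y>2C$ you propose to derive, and $h(8)>0$ alone gives neither.

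\textbf{The missing idea is how position $8$ is used.} The paper sets $q_{\pm1}=u$, $q_{\pm4}=v$, $q_{\pm5}=w$, $q_{\pm6}=x$, $q_{\pm7}=y$, $q_8=z$. It then works with the exact identities for $(a*a)_1/2$, $(a*a)_7/2$ and
\[
(a*a)_8/2=-Az-2Cw-2Dx+2uy+v^2 .
\]
In the position-$7$ inequality, $z$ helps only through the term $+uz$, while position $8$ penalizes it by $-Az$. After normalizing by $C+D$ and replacing $A/(C+D)$ by $1$, the paper adds $u$ times the position-$8$ inequality to the position-$7$ inequality. This cancels $uz$ exactly; note that the multiplier is the variable $u$, not a fixed weight.

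The resulting inequality has negative coefficients on $x$ and $y$. This forces $v<d$ and $uv>c$, hence $c<1/7$. These bounds then dominate $vw+wx+xy+yz$ in the position-$1$ inequality and make it negative. Nothing in your outline plays the role of this elimination.
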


\begin{proof}
Write
\[
 a_0=A,\qquad a_{\pm2}=D,\qquad a_{\pm3}=C,
\]
and, at the remaining inversion orbits, put
\[
 q_{\pm1}=u,\quad q_{\pm4}=v,\quad q_{\pm5}=w,\quad
 q_{\pm6}=x,\quad q_{\pm7}=y,\quad q_8=z.
\]
All six variables are nonnegative.  Lemmas~\ref{lem:A-CD} and
\ref{lem:strong-mass} give
\begin{equation}
 A>C+D,\qquad
 2(u+v+w+x+y)+z\le C+D+\min(C,D).
 \label{eq:n16-mass}
\end{equation}
Exact convolution at positions $1,7,8$ gives
\begin{align}
 \frac{(a*a)_1}{2}
 &=CD-(A+D)u-Cv+vw+wx+xy+yz,\label{eq:n16-b1}\\
 \frac{(a*a)_7}{2}
 &=-(A+D)y-C(v+x)-Dw+ux+uz+vw,\label{eq:n16-b7}\\
 \frac{(a*a)_8}{2}
 &=-Az-2Cw-2Dx+2uy+v^2.\label{eq:n16-b8}
\end{align}
Each left side is positive.

Let $S=C+D$, set $c=C/S$ and $d=D/S$, and divide
$u,v,w,x,y,z$ by $S$, retaining these six letters and leaving $A$
unchanged.  Then $c+d=1$.  After dividing
\eqref{eq:n16-b1}--\eqref{eq:n16-b8} by $S^2$, replacing $A/S>1$
by $1$ can only increase the right sides, so the following inequalities
are necessary:
\begin{align}
 F_1&:=cd-(1+d)u-cv+vw+wx+xy+yz>0,\label{eq:F1}\\
 F_7&:=-(1+d)y-c(v+x)-dw+ux+uz+vw>0,\label{eq:F7}\\
 F_8&:=-z-2cw-2dx+2uy+v^2>0.\label{eq:F8}
\end{align}
Moreover,
\begin{equation}
 2(u+v+w+x+y)+z\le1+\ell,
 \qquad \ell=\min(c,d).
 \label{eq:n16-normal-mass}
\end{equation}
Put $L=(1+\ell)/2$; then $u+v+w+x+y\le L$.

Adding $uF_8$ to $F_7$ eliminates $uz$ and gives
\begin{align}
0<&\,[u(2c-1)-c]x+[v-d-2cu]w\notag\\
  &+[2u^2-(1+d)]y+v(uv-c).
 \label{eq:n16-eliminate}
\end{align}
Both the coefficient of $x$ and the coefficient of $y$ are negative.
Indeed, $u\le L\le3/4<1$ and $2u\le1+\ell\le1+d$, so
\[
 u(2c-1)-c=-c(1-u)-du<0,\qquad
 2u^2\le(1+d)u<1+d.
\]
Consequently,
\begin{equation}
 [v-d-2cu]w+v(uv-c)>0.
 \label{eq:n16-key}
\end{equation}

We claim that $v<d$.  If $v\ge d$, then $v\le L<1$ and
\[
 uv+w\le u+w\le L-v\le L-d<c.
\]
It follows that
\[
 [v-d-2cu]w+v(uv-c)
 =v(uv+w-c)-(d+2cu)w<0,
\]
contrary to \eqref{eq:n16-key}.  Since $v<d$, the coefficient of $w$
in \eqref{eq:n16-key} is negative, and therefore $uv>c$.  In particular,
\[
 c<uv\le\frac{L^2}{4}\le\frac9{64}<\frac17,
\]
so $\ell=c$ and $L=(1+c)/2$.

Put
\[
 b=d+2cu-v,\qquad g=1+d-2u^2,\qquad \delta=v(uv-c)>0.
\]
The mass bound and $c<1/7$ give
\[
 2b-L\ge2d-3L=\frac{1-7c}{2}>0,
\]
and
\[
 g-L\ge1+d-2L^2-L=\frac{2-5c-c^2}{2}>0.
\]
Dropping the nonpositive $x$ term in \eqref{eq:n16-eliminate}, we obtain
\begin{equation}
 bw+gy<\delta.
 \label{eq:n16-delta}
\end{equation}
Let $K=vw+wx+xy+yz$.  From \eqref{eq:n16-normal-mass},
$v+x\le L$ and $x+z\le2L$.  Since $L<2b$ and $L<g$,
\[
 K=(v+x)w+(x+z)y
 \le Lw+2Ly\le2(bw+gy)<2\delta.
\]
Substitution in \eqref{eq:F1} yields
\[
 F_1<cd-u(1+d-2v^2)-3cv.
\]
Because $0<v<1$ and $2v\le1+\ell\le1+d$, we have
$1+d-2v^2>0$.  Using $u>c/v$, we conclude that
\[
 F_1<cd-\frac c v(1+d-2v^2)-3cv
 =c\left(d-v-\frac{1+d}{v}\right)<0,
\]
contrary to \eqref{eq:F1}.
\end{proof}

\section{Odd orders}

The short relation has fewer nonprimitive possibilities in odd order.

\begin{proposition}\label{prop:odd}
Let $n\ge17$ be odd.  If a feasible spectrum on $C_n$ has five
positive coefficients, an automorphism sends its positive support to
one of
\[
 \cP_{12},\quad\cP_{13},\quad\cP_{14},\quad
 \cP_{15},\quad\cP_{23}.
\]
The only additional possibility is $\cP_{1,10}$ in order $25$.
\end{proposition}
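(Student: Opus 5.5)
The plan is to follow the proof of Proposition~\ref{prop:six-families}, but now in odd order, where $2$ is a unit modulo $n$ and the parity lemma is not available. Write the support as in \eqref{eq:five-support}. Every nonzero element of $C_n$ has odd order. If some $t\in\{r,s\}$ had $\gcd(t,n)\ge7$, the quotient by $\langle t\rangle$ would be a cyclic group of odd order at least $7$. By Lemma~\ref{lem:quotient}, that quotient carries a feasible spectrum with at most three positive positions. This contradicts Lemma~\ref{lem:three-positive}. Hence $\gcd(r,n)$ and $\gcd(s,n)$ both lie in $\{1,3,5\}$. Next I would take a relation $\alpha r+\beta s\equiv0$ with $|\alpha|+|\beta|\le6$ from Theorem~\ref{thm:short-relation} and split into the cases $\alpha\beta\ne0$ and $\alpha\beta=0$.

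\emph{Case $\alpha\beta\ne0$.} Put $g=\gcd(\alpha,\beta)$; since $|\alpha|+|\beta|\le 6$, we have $g\in\{1,2,3\}$. If $g=2$, or if $g=3$ with $3\nmid n$, then $g$ is a unit and we may divide the relation by $g$. The only remaining pair is $(|\alpha|,|\beta|)=(3,3)$ with $3\mid n$. Then $r\pm s$ lies in the subgroup $K$ of order $3$. In $G/K$, which has odd order $n/3\ge7$ because $n\ge21$, the image of $P$ is $\{0,\pm\bar r\}$. Lemmas~\ref{lem:quotient} and \ref{lem:three-positive} rule this out.

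So we may assume the pair is primitive. Then $(|\alpha|,|\beta|)$ is one of $(1,1),(1,2),(1,3),(1,4),(1,5),(2,3)$, up to order. For such a pair I would reuse the $\delta$-construction from Proposition~\ref{prop:six-families}. Choose $x,y$ with $xr+ys\equiv1$ and set $\delta=\alpha y-\beta x$. This $\delta$ is a unit, and it sends $\{r,s\}$ to $\{-\beta,\alpha\}$ up to sign. The pair $(1,1)$ would force $r\equiv\pm s$, contradicting the distinctness of the five points. The remaining pairs give exactly $\cP_{12},\cP_{13},\cP_{14},\cP_{15},\cP_{23}$.

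\emph{Case $\alpha\beta=0$.} Here one frequency $t$ has order $q\le6$, and $q\ne1$ by distinctness. Since $q$ divides the odd number $n$, we get $q\in\{3,5\}$. Combined with $n/q=\gcd(t,n)\le5$, this forces $n\le25$. The only odd $n\ge17$ allowed is $n=25$, with $q=5$. The other frequency then has $\gcd$ with $25$ at most $5$. It is not a multiple of $5$, since otherwise $P$ would not generate $G$ (Lemma~\ref{lem:basic}). So it is a unit, and multiplying by its inverse normalizes it to $1$. The order-five frequency then becomes $\pm5$ or $\pm10$. This gives $\cP_{15}$ or the extra support $\cP_{1,10}$ in order $25$.

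The step needing the most care is the nonprimitive $(3,3)$ alternative, where dividing by $3$ is illegal. The quotient argument above handles it, but one must check that the quotient order $n/3$ is large enough; I would confirm that $3\mid n$ with $n\ge17$ odd gives $n/3\ge7$. The other point to double-check is that the $\delta$-unit normalization of the even-order proof transfers verbatim to odd $n$.
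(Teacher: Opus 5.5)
Your proposal is correct and follows essentially the same route as the paper. You take the short relation from Theorem~\ref{thm:short-relation}, use quotients with Lemmas~\ref{lem:quotient} and \ref{lem:three-positive} to eliminate the $3(r\pm s)=0$ case and the vanishing-coefficient case (leaving only $n=25$, which yields $\cP_{15}$ or $\cP_{1,10}$), and apply the $\delta$-unit normalization to the primitive pairs. The only differences are organizational: you front-load the bound $\gcd(t,n)\le5$ and fold the divided $g=2$ relations into the primitive list, whereas the paper quotients directly by the order-$3$ or order-$5$ frequency and treats $g=2$ separately.
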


\begin{proof}
Write the support as $\{0,\pm r,\pm s\}$ and take a nonzero relation
$\alpha r+\beta s=0$ with $|\alpha|+|\beta|\le6$, as supplied by
Theorem~\ref{thm:short-relation}.

If one coefficient vanishes, the corresponding frequency has order
$3$ or $5$.  Quotienting by its generated subgroup leaves at most three
positive positions.  In the order-three case the quotient has odd
order at least $7$, contrary to Lemmas~\ref{lem:three-positive} and
\ref{lem:quotient}.  In the order-five case the same argument applies
unless $n=25$.  At that order the other frequency must be a unit.
Normalize it to $1$; up to sign the frequency of order five is then
$5$ or $10$.

Suppose that $\alpha\beta\ne0$, and put
$g=\gcd(|\alpha|,|\beta|)\le3$.  If $g=2$, divide the relation by two,
which is invertible modulo $n$.  The resulting absolute-value pair is
$(1,1)$ or $(1,2)$, up to interchange.  The former identifies the two
inverse pairs and is impossible; the latter gives $\cP_{12}$, because
the generating condition makes its underlying frequency a unit.
If $g=3$, the relation is $3(r\pm s)=0$.  The element $r\pm s$ is
nonzero and has order three.  Quotienting by it again leaves at most
three positive positions in a group of order at least seven.

For $g=1$, the possible absolute-value pairs, apart from $(1,1)$, are
\[
 (1,2),(1,3),(1,4),(1,5),(2,3).
\]
Choose $x,y$ with $xr+ys=1\pmod n$, and put
$\delta=\alpha y-\beta x$.  Then
$\delta r=-\beta$ and $\delta s=\alpha$ modulo $n$.
Any common divisor of $\delta$ and $n$ divides both $\alpha$ and
$\beta$, so $\delta$ is a unit.  This proves the classification.
\end{proof}

In the remaining arguments put $a=p-q$, with $p=a_+$ and $q=a_-$.
For a five-point positive support write $p_0=A$, give the two nonzero
inverse pairs the weights $C,D$, and put
\[
 S=C+D,\qquad \ell=\min(C,D),\qquad M=\sum_kq_k.
\]
Thus $M\le S+\ell<2S$.  Since $|P+P|\le13<n$, the set
$R=C_n\setminus(P+P)$ is nonempty, and feasibility implies $M>0$.

\begin{lemma}\label{lem:odd12}
The support $\cP_{12}$ is infeasible in every odd order $n\ge17$.
\end{lemma}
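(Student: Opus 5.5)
We plan a complementary-sum argument for $\cP_{12}=\{0,\pm1,\pm2\}$ in odd order $n\ge17$, with $p_0=A$, $p_{\pm1}=C$, $p_{\pm2}=D$. In odd order there is no parity compression, so we argue directly on $C_n$. The sumset is $T=P+P=\{0,\pm1,\pm2,\pm3,\pm4\}$, which has nine elements. Its complement $R$ has $n-9\ge8$ elements and forms the arc $\{5,\dots,n-5\}$. Since $n\ge17$, no wraparound interferes with shifts by at most $4$.

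\emph{Transport bound.} We bound $\sum_R p*q$ from below, one negative position at a time.
\begin{itemize}
\item Mass at $\pm4$ moves into $R$ by the modes $\pm1$ and $\pm2$ (to $\pm5,\pm6$), with weight $C+D$.
\item Mass at $\pm3$ moves into $R$ only by the mode $\pm2$ (to $\pm5$), with weight $D$.
\item Mass already in $R$ keeps the zero mode, and at least one of $k\pm1$ and one of $k\pm2$ stays in $R$. This gives weight at least $A+C+D$, or at least $C+D$ near the ends of the arc.
\end{itemize}
Writing $u=q_{\pm3}$ and $B=2u$, we get
\[
 \sum_Rp*q\ge(C+D)M-CB.
\]

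\emph{Quadratic term.} Among products of the boundary pair $\pm3$, only $3+3=6$ and $-3-3=-6$ land in $R$. This gives
\[
 \sum_R q*q\le M^2-B^2+2u^2=M^2-\tfrac{B^2}{2}.
\]
Positivity of $a*a$ on $R$ then yields, with $\Delta=2C+2D-M$,
\[
 M\Delta<2CB-\tfrac{B^2}{2}.
\]
We now apply Lemma~\ref{lem:quadratic-mass} with $T=B\le M$, using the strong mass bound of Lemma~\ref{lem:strong-mass}. It gives $M\Delta>2\max(C,D)B-B^2\ge 2CB-B^2$. This is not yet a contradiction, since $-B^2<-B^2/2$. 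The missing ingredient is the $v^2$-type accounting: more precisely, $(q*q)$ restricted to $R$ from $\pm3$ contributes $2u^2=B^2/2$.

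\emph{Closing the gap.} To recover the missing $B^2/2$, we would use the point constraint $(a*a)_5>0$, or equivalently a refined transport. Mass at $\pm3$ can also move by the mode $\pm1$ to $\pm4$, which is not in $R$. Instead we add a weighted copy of the boundary point $\pm4$ into $R$, with weight $t$ chosen as in Theorem~\ref{thm:P23}, that is, $w=\one_R+t\one_{\{\pm4\}}$. Position $\pm3$ then gains transport weight $tC$ from the mode $\pm1$. The cost is the term $t(p*p)_{\pm4}=2tD^2$ together with an extra $q*q$ contribution.

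Alternatively, as in the final step of Theorem~\ref{thm:P14-generic}, we would combine the complementary inequality with the exact coefficient $(a*a)_3=2CD-\dots$, normalize by $C$, and finish with an elementary optimization similar to Lemma~\ref{lem:algebra}. We would also use $A>C+D$ from Lemma~\ref{lem:A-CD}; its hypotheses hold since $1\pm2\notin P$ for $n\ge10$.

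We expect this final balancing to be the main obstacle: choosing the weight $t$, or the auxiliary coefficient, so that the leftover quadratic inequality is negative on the whole region $M\le C+D+\min(C,D)$. We would try the $(a*a)_3$ route first, handling the cases $C\ge D$ and $D>C$ separately.
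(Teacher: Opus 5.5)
Your complementary-sum setup is correct, but the proposal stops where the proof has to happen. After deriving $M\Delta<2CB-B^2/2$ and noting, correctly, that Lemma~\ref{lem:quadratic-mass} does not contradict it, you list two possible continuations and carry out neither. So there is no proof yet.

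Part of the loss is in your bookkeeping. Write $q_{\pm3}=u$, $q_{\pm4}=v$, $s=u+v$, $W=M-2s$.
\begin{itemize}
\item Every product between $\{3,4\}$ and $\{-3,-4\}$ lands in $\{0,\pm1\}\subseteq P+P$. Hence $\sum_Rq*q\le M^2-2s^2$, not merely $M^2-2u^2$.
\item Since $R=\{5,\dots,n-5\}$ has at least eight points, mass in $R$ always has weight $A+C+D$; there is no end effect.
\end{itemize}
Because $4s+W<2A+2C+2D$, this gives $s^2>2Du+2(C+D)v$ and $s>0$. That settles $C\le D$ at once: $s>2D$, whereas $s\le M/2\le(D+2C)/2\le 3D/2$.

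The case $C>D$ is the real gap, and neither your inequality nor the sharper one closes it. For example, $C=1$, $D=1/100$, $u=2/5$, $v=W=0$ satisfies both complementary inequalities and the strong mass bound.

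The missing idea is to pair $a*a$ with $q$ itself. Since $q\ne0$, positive definiteness of $(a_{i-j})$ gives $\langle q,q*q\rangle<\langle q,p*q\rangle=\langle p,q*q\rangle$, hence
\[
 0<\langle q,a*a\rangle<\langle q,p*p\rangle-\langle p,q*q\rangle .
\]
Here $\langle q,p*p\rangle=4CDu+2D^2v$ and $\langle p,q*q\rangle\ge2A(u^2+v^2)+4Cuv>2Cs^2$. Therefore $s^2<2Du+(D^2/C)v\le2Du+2(C+D)v$, contradicting the complementary bound.

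Your $(a*a)_3$ route may be workable, but it is unverified. As planned, it also rests on a false input: Lemma~\ref{lem:A-CD} does not apply to $\cP_{12}$. With $(r,s)=(1,2)$ one has $r-s=-1\in P$, and the test vector only gives $4A-2C-4D+2a_3>0$, not $A>C+D$. The paper's argument needs only $A>\max(C,D)$, which Lemma~\ref{lem:basic} supplies.
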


\begin{proof}
Take $p_{\pm1}=C$, $p_{\pm2}=D$, and write
\[
 q_{\pm3}=u,\quad q_{\pm4}=v,\quad s=u+v,\quad W=M-2s.
\]
Here $P+P=\{0,\pm1,\pm2,\pm3,\pm4\}$.  Its complement is the interval
$\{5,\ldots,n-5\}$.  Each of its points has a translate by a mode
$\pm1$ and a translate by a mode $\pm2$ still in that interval.
The boundary positions $\pm3$ enter it with weight $D$, while
$\pm4$ enter with weight $S$.  Hence
\[
 \sum_Rp*q\ge2Du+2Sv+(A+S)W.
\]
All products between $\{3,4\}$ and $\{-3,-4\}$ fall outside $R$ and
have total mass $2s^2$.  Consequently,
\[
 0<\sum_Ra*a
 \le2s^2-4Du-4Sv+W(4s+W-2A-2S).
\]
Since $4s+W=2M-W\le2S+2\ell-W<2A+2S$, this implies
\begin{equation}
 s^2>2Du+2Sv,\qquad s>0.
 \label{eq:o12}
\end{equation}
If $C\le D$, then $s>2D$, whereas
$s\le M/2\le(D+2C)/2\le3D/2$.

It remains to take $C>D$.  For the inner product
$\langle f,g\rangle=\sum_{k\in C_n}f_kg_k$, positive definiteness of
$(a_{i-j})$ gives
\[
 \langle q,q*q\rangle<\langle q,p*q\rangle.
\]
Since $a*a>0$ and $q\ne0$,
\[
 \begin{aligned}
 0<\langle q,a*a\rangle
 &=\langle q,p*p\rangle-2\langle q,p*q\rangle
   +\langle q,q*q\rangle\\
 &<\langle q,p*p\rangle-\langle q,p*q\rangle.
 \end{aligned}
\]
Evenness gives $\langle q,p*q\rangle=\langle p,q*q\rangle$.  But
\[
 \langle q,p*p\rangle=4CDu+2D^2v,
\]
and
\[
 \langle p,q*q\rangle
 \ge2A(u^2+v^2)+4Cuv>2C(u+v)^2.
\]
Therefore $s^2<2Du+(D^2/C)v\le2Du+2Sv$, contradicting
\eqref{eq:o12}.
\end{proof}

\begin{lemma}\label{lem:weak}
Let $R$ be a nonempty subset of $C_n\setminus(P+P)$.  Suppose that
there are a set $E$ and a number $\ell\le\sigma\le S$ such that
$(E+E)\cap R=\varnothing$, and every position where $q_k>0$ satisfies
\[
 \sum_{\substack{j\in P\\k+j\in R}}p_j
 \ge
 \begin{cases}
  \sigma,&k\in E,\\
  S,&k\notin E.
 \end{cases}
\]
Then the five-point positive support is infeasible.
\end{lemma}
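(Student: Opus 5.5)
We argue by contradiction with the complementary-sum method already used in Lemma~\ref{lem:three-positive} and Theorem~\ref{thm:short-relation}. Suppose a feasible spectrum has the five-point positive support, and write $Q=\sum_{k\in E}q_k$ for the negative mass on $E$, so that $M-Q$ is the remaining negative mass. Since $R\cap(P+P)=\varnothing$, the function $p*p$ vanishes on $R$, and feasibility gives
\[
 0<\sum_R a*a=-2\sum_R p*q+\sum_R q*q .
\]
The transport hypothesis, applied through the identity $\sum_{k\in R}(p*q)_k=\sum_j q_j\sum_{i\in P,\,j+i\in R}p_i$, yields
\[
 \sum_R p*q\ge \sigma Q+S(M-Q)=SM-(S-\sigma)Q .
\]

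For the quadratic term, expand $\sum_R q*q=\sum_{i+j\in R}q_iq_j$. The pairs with both $i,j\in E$ contribute nothing, since $(E+E)\cap R=\varnothing$. Bounding all remaining pairs trivially gives
\[
 \sum_R q*q\le M^2-Q^2 .
\]
Combining the two estimates,
\[
 0<M^2-Q^2-2SM+2(S-\sigma)Q=-M(2S-M)+2(S-\sigma)Q-Q^2 .
\]

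It remains to show that the right side is nonpositive, and this is exactly Lemma~\ref{lem:quadratic-mass}, up to identifying the parameters. With $T=Q\in[0,M]$, $\Delta=2S-M$, and the strong mass bound $M\le S+\ell$ from Lemma~\ref{lem:strong-mass}, that lemma gives $M\Delta>2\max(C,D)T-T^2$. Since $\sigma\ge\ell$, we have $S-\sigma\le S-\ell=\max(C,D)$, so $2(S-\sigma)Q-Q^2\le 2\max(C,D)Q-Q^2<M\Delta$ whenever $M>0$. This contradicts the displayed positivity. The degenerate case $M=0$ is excluded as before: $a*a$ would then vanish on the nonempty set $R$, which is impossible. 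This proves Lemma~\ref{lem:weak}.

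The only delicate point is the quadratic bound. One must check that discarding exactly the $E\times E$ products is legitimate, which uses only $(E+E)\cap R=\varnothing$. One must also check that the hypothesis $\sigma\ge\ell$ is precisely what converts the coefficient $S-\sigma$ into $\max(C,D)$, so that Lemma~\ref{lem:quadratic-mass} applies. No finer bookkeeping of individual positions is needed.
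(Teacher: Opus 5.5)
Your proof is correct and is essentially the paper's argument: the same transport bound $\sum_R p*q\ge SM-(S-\sigma)Q$, the same quadratic bound $\sum_R q*q\le M^2-Q^2$ obtained from $(E+E)\cap R=\varnothing$, and Lemma~\ref{lem:quadratic-mass} applied with $T=Q$ together with $S-\sigma\le S-\ell=\max(C,D)$. Your explicit treatment of the case $M=0$ is a small addition; the paper covers it in the remark just before Lemma~\ref{lem:odd12}.
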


\begin{proof}
Put $B=\sum_Eq_k$.  The assumptions give
\[
 \sum_Rp*q\ge SM-(S-\sigma)B,\qquad
 \sum_Rq*q\le M^2-B^2.
\]
Lemma~\ref{lem:quadratic-mass}, with $T=B$, implies
\[
 \begin{aligned}
 \sum_Ra*a
 &\le M^2-B^2-2SM+2(S-\sigma)B\\
 &<2(\ell-\sigma)B\le0,
 \end{aligned}
\]
which contradicts strict positivity on $R$.
\end{proof}

\begin{lemma}\label{lem:odd13}
The support $\cP_{13}$ is infeasible in every odd order $n\ge17$.
The support $\cP_{1,10}$ is infeasible in $C_{25}$.
\end{lemma}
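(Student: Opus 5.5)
The plan is to derive both assertions from Lemma~\ref{lem:weak}. For each support I will list $P+P$, take $R$ to be its complement, and compute for every negative boundary position $k\in(P+P)\setminus P$ the total weight $\sum_{j\in P,\,k+j\in R}p_j$. A small set $E$ will then collect the boundary positions whose weight is less than $S$. Negative mass already in $R$ is kept in $R$ by the zero mode. Its weight $A$ exceeds $S$ by Lemma~\ref{lem:A-CD}, which applies in both cases:
\begin{itemize}
\item for $\cP_{13}$ take $(r,s)=(1,3)$; then $0,1,3,4$ are distinct and $4,2\notin P$;
\item for $\cP_{1,10}$ in $C_{25}$ take $(r,s)=(1,10)$; then $0,1,10,11$ are distinct and $11,9\notin P$.
\end{itemize}

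\emph{The support $\cP_{13}$.} Put $p_{\pm1}=C$ and $p_{\pm3}=D$. Then
\[
 P+P=\{0,\pm1,\pm2,\pm3,\pm4,\pm6\},
\]
so $R$ consists of $\pm5$ and the interval $\{7,\dots,n-7\}$. For odd $n\ge17$ this interval is nonempty, and it causes no wraparound collisions; I would check this once at $n=17$, where $R=\{5,\dots,12\}$. The boundary weights are as follows.
\begin{itemize}
\item At $k=\pm2$, only the translate $\pm2\pm3=\pm5$ lands in $R$, so the weight is $D$.
\item At $k=\pm4$, the translates $\pm5$ and $\pm7$ land in $R$, so the weight is $C+D=S$.
\item At $k=\pm6$, the translates $\pm5$, $\pm7$ and $\pm9$ land in $R$, so the weight is $2C+D\ge S$.
\end{itemize}
Take $E=\{\pm2\}$ and $\sigma=D$. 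Then $\ell\le\sigma\le S$, and $E+E=\{0,\pm4\}$ misses $R$. Lemma~\ref{lem:weak} now gives infeasibility.

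\emph{The support $\cP_{1,10}$ in $C_{25}$.} Put $p_{\pm1}=C$ and $p_{\pm10}=D$. Using $20\equiv-5$, we get
\[
 P+P=\{0,\pm1,\pm2,\pm5,\pm9,\pm10,\pm11\},
\]
and the boundary positions are $\pm2,\pm5,\pm9,\pm11$. Computing translates modulo $25$:
\begin{itemize}
\item At $k=2$, the translates $3$ and $12$, together with $-8$ (from the mode $-10$), lie in $R$, so the weight is $S$.
\item At $k=9$, the translates $8$ and $-6$ lie in $R$, so the weight is $S$.
\item At $k=11$, the translates $12$ and $-4$ lie in $R$, so the weight is $S$.
\item At $k=5$, the translates $4$ and $6$ lie in $R$ but both $\pm10$-shifts land in $P$, so the weight is $2C$.
\end{itemize}
If $C\le D$, take $E=\{\pm5\}$ and $\sigma=2C$. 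Then $\ell\le\sigma\le S$, and $E+E=\{0,\pm10\}\subseteq P$ misses $R$. If $C>D$, every weight is at least $S$, so take $E=\varnothing$. In both cases Lemma~\ref{lem:weak} applies.

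The only real obstacle is bookkeeping: making sure each claimed translate really lies in $R$ modulo $n$. This matters mainly at $n=17$ for $\cP_{13}$ and throughout $C_{25}$. I would verify these memberships with an explicit table of translates, as in Proposition~\ref{prop:exceptions}.
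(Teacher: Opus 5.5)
Your proposal is correct and follows the paper's proof essentially verbatim. Both apply Lemma~\ref{lem:weak} with the same transports, taking $E=\{\pm2\}$ for $\cP_{13}$ and $E=\{\pm5\}$ (weight $2C$) for $\cP_{1,10}$; your case split on $C\le D$ versus $C>D$ is equivalent to the paper's single choice $\sigma=\min(S,2C)$. There are only cosmetic slips, none of which affects the argument:
\begin{itemize}
\item at $n=17$ the complement is $\{5,7,8,9,10,12\}$, not the full range $\{5,\dots,12\}$;
\item in $C_{25}$ the weight at $\pm2$ is $C+2D\ge S$, not exactly $S$;
\item the shift $5-10=-5$ lands in $(P+P)\setminus P$, not in $P$.
\end{itemize}
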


\begin{proof}
For $\cP_{13}$, put $p_{\pm1}=C$ and $p_{\pm3}=D$.
Lemma~\ref{lem:A-CD} gives $A>S$, and
\[
 P+P=\{0,\pm1,\pm2,\pm3,\pm4,\pm6\}.
\]
For positive representatives of its negative boundary positions,
the following shifts enter $R$:
\[
 2+3=5,\qquad
 4+1=5,\quad4+3=7,\qquad
 6-1=5,\quad6+3=9.
\]
They remain outside $P+P$ at order $17$ and at every larger odd order.
Thus the positions $\pm2$ have weight at least $D\ge\ell$, the other
boundary positions have weight at least $S$, and mass already in $R$
has the zero-mode weight $A>S$.  Apply Lemma~\ref{lem:weak} with
$E=\{\pm2\}$, since $E+E=\{0,\pm4\}\subseteq P+P$.

For $\cP_{1,10}$ in $C_{25}$, let $p_{\pm1}=C,p_{\pm10}=D$.
Again $A>S$, and
\[
 P+P=\{0,\pm1,\pm2,\pm5,\pm9,\pm10,\pm11\}.
\]
The boundary positions $\pm2$ have weight at least $C+2D$.
The shifts
$9-1=8$, $9+10=19$, $11+1=12$, and $11+10=21$ give weight $S$
at $\pm9,\pm11$.  The only remaining boundary positions $\pm5$
have weight $2C$ from shifts by $\pm1$, and
$\{\pm5\}+\{\pm5\}=\{0,\pm10\}\subseteq P$.
Lemma~\ref{lem:weak} applies with $\sigma=\min(S,2C)\ge\ell$.
\end{proof}

\begin{lemma}\label{lem:odd23}
The support $\cP_{23}$ is infeasible in every odd order $n\ge17$.
\end{lemma}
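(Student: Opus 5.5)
The plan is to run the complementary-sum argument once more, now with a weighted test function, as in the proof of Theorem~\ref{thm:P23}. Put $p_0=A$, $p_{\pm2}=D$, $p_{\pm3}=C$, $S=C+D$, $\ell=\min(C,D)$. Lemma~\ref{lem:A-CD} gives $A>S$, and Lemma~\ref{lem:strong-mass} gives $M\le S+\ell$. Here
\[
 P+P=\{0,\pm1,\pm2,\pm3,\pm4,\pm5,\pm6\},
\]
and $R=C_n\setminus(P+P)=\{7,\dots,n-7\}$ has at least three points when $n\ge17$.

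First I check the transports into $R$. Mass already in $R$ has zero-mode weight $A>S$, and for every point of $R$ one translate by $\pm2$ and one by $\pm3$ stay in $R$. The positions $\pm5,\pm6$ enter $R$ with full weight $S$, via $5+2$, $5+3$, $6+2$, $6+3$. The positions $\pm4$ enter only via $4+3=7$, so they have weight $C$.

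The defect is at $\pm1$: every translate $1+\{0,\pm2,\pm3\}$ lies in $P+P$, so these positions have transport weight $0$. Hence Lemma~\ref{lem:weak} cannot be applied directly. This is the main obstacle. It is the same phenomenon as in the even-order $\cP_{23}$ case, where the set $H=\{-1,0\}$ was handled by a weight $t$.

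Following that proof, I use the weight $w=\one_R+t\one_{\{\pm1\}}$ with $t=D/(A+D)$ or a similar choice. Then:
\begin{itemize}
\item $\langle w,p*p\rangle=4tCD$, since $(p*p)_{\pm1}=2CD$;
\item the zero mode now moves $q_{\pm1}$ with weight $tA$;
\item the $\pm2$ modes lift $q_{\pm3}$ onto $\pm1$ with extra weight $tD$, and likewise the $\pm3$ modes lift $q_{\pm4}$ (extra weight $tC$) and $q_{\pm2}$ (extra weight $tC$).
\end{itemize}
Writing $q_{\pm1}=u$ and $q_{\pm4}=v$ and using $\langle w,q*q\rangle\le(1+t)M^2$ or a sharper count, I expect an inequality of the form
\[
 0<M^2-2SM+2Cv'+2u\cdot(\text{small})+4tCD,
\]
where $v'$ collects the $\pm4$ deficit. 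The term $4tCD$ is below $4CD\cdot D/(S+D)$ because $A>S$.

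If this single weighted sum does not close, the fallback is the exact coefficient $(a*a)_1$. Only $(p*p)_1=2CD$ is positive there, and it is opposed by $-2(A+D)u-2Cv-\dots$, which bounds $u$ from above. I would combine this with $(a*a)_4$, where $(p*p)_4=D^2$, to bound $v$. Then $u$ and $v$ are small relative to $C,D$. With $u,v$ controlled, the unweighted sum over $R$ gives $M\Delta<2C\cdot 2v+(\text{terms in }u)$, with $\Delta=2S-M$. Lemma~\ref{lem:quadratic-mass} with $T=2v$ then yields a contradiction, exactly as in Lemma~\ref{lem:weak}.

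I expect the delicate step to be choosing $t$, or the combination of positions $1$ and $4$, so that the $2CD$ gain at $\pm1$ is dominated. If necessary I would normalise by $S$ and finish with an elementary maximisation, as in Lemma~\ref{lem:algebra} and the end of Theorem~\ref{thm:P14-small}.
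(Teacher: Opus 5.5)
Your proposal has a genuine gap: neither route you sketch can close, because the main obstacle is misidentified. The weight-zero positions $\pm1$ are essentially harmless. In an exact count, $q_{\pm1}$ contributes nothing to $\sum_R p*q$ and enters $\sum_R q*q$ only through $1+6=7$. The real difficulty is the mass on $\pm4,\pm5,\pm6$, whose same-sign sums, such as $4+4=8$, land in $R$.

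Concretely, take $C=1/10$, $D=9/10$, $A=101/100$, $q_{\pm4}=v=3/10$, and all other coefficients zero. Then $A>S$ and $M=3/5\le S+\ell$. Moreover $\sum_R a*a=2(v^2-2Cv)=3/50>0$, $(a*a)_{\pm1}=2CD-2Cv=3/25>0$, and $(a*a)_{4}=D^2-2Av>0$.

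So $\langle w,a*a\rangle>0$ for every $t\ge0$. Your fallback combination of positions $1$, $4$ and the unweighted complementary sum is also consistent with the facts you invoke. Position $4$ only gives roughly $v<D^2/(2A)$, which is not small compared with $C$ when $C\ll D$.

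Your final appeal to Lemma~\ref{lem:weak} with $T=2v$ is invalid, since $(E+E)\cap R\ne\varnothing$ for $E=\{\pm4\}$. The correct count gives only $M\Delta<4Dv-2v^2$, and this is compatible with Lemma~\ref{lem:quadratic-mass}. Note also that the deficit at $\pm4$ is $D$, not $C$.

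The missing idea is a second, independent coefficient at which $p*p$ is only $C^2$. Normalize by $S$, with $c=C/S$, $d=D/S$, $\alpha=A/S$, and put $\tau=q_4+q_5+q_6$. The paper first extracts from the exact complementary sum the inequality $\tau(\tau-2c)-2dq_5+2(q_1-d)q_6>0$. This forces $c<1/3$, $L=(1+c)/2<d$ and $\tau>2c$, and it leaves a surplus that controls $q_5$ and the mass in $R$.

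It then uses position $6$, where $(p*p)_6=c^2$ but $(p*q)_6\ge\alpha q_6+dq_4$. This gives $0<c^2-2\tau(1+c-\tau)<0$ on $2c<\tau\le(1+c)/2$. In the example above, indeed $(a*a)_6=C^2-2Dv<0$.

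The case $n=17$ must be handled separately. There $R=\{7,8,9,10\}$ and $8\pm3\in\{5,-6\}$, so your claim that every point of $R$ has a $\pm3$-translate in $R$ fails. The paper uses exact convolution on $\{\pm7,\pm8\}$ instead.

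Two smaller slips in your weight bookkeeping:
\begin{itemize}
\item $q_{\pm2}=q_{\pm3}=0$ because these positions lie in $P$, so two of your bullet items are vacuous.
\item $q_{\pm1}$ receives weight $t(A+D)$ (via $1-2=-1$), not $tA$.
\end{itemize}
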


\begin{proof}
Put $p_{\pm3}=C,p_{\pm2}=D$.  Lemma~\ref{lem:A-CD} gives $A>S$.
Divide the whole spectrum by $S$ and put
\[
 c=C/S,\qquad d=D/S,\qquad \alpha=A/S>1,\qquad
 L=\frac{1+\min(c,d)}2.
\]
Thus $c+d=1$ and $L\le3/4$.  All negative coefficients in the rest
of this proof refer to the normalized spectrum.

First suppose $n\ge19$.  Write
\[
 q_{\pm1}=u,\quad q_{\pm4}=v,\quad
 q_{\pm5}=w,\quad q_{\pm6}=x.
\]
Let $q_0$ be the restriction to these eight positions, let $r=q-q_0$,
and put
\[
 t=v+w+x,\qquad s=u+t,\qquad W=\sum r_k,
 \qquad 2s+W\le2L.
\]
The complement of $P+P=\{0,\pm1,\ldots,\pm6\}$ is the interval
$R=\{7,\ldots,n-7\}$ of length at least six.  Boundary mass at
$\pm4$ enters $R$ with weight at least $c$, and mass at
$\pm5,\pm6$ with weight at least $1$.  Every point of $R$ has a
translate by a mode $\pm2$ and one by a mode $\pm3$ still in $R$.
Therefore
\[
 \sum_Rp*q\ge2cv+2(w+x)+(\alpha+1)W.
\]
The same-sign boundary sums lying in $R$ are precisely those from
$\{4,5,6\}+\{4,5,6\}$ and $1+6,6+1$, together with their negatives;
opposite-sign sums lie outside $R$.  Hence
\[
 \sum_Rq_0*q_0=2(t^2+2ux),\qquad
 \sum_Rq*q\le2(t^2+2ux)+4sW+W^2.
\]
It follows that
\begin{equation}
 0<R_0+W(2s+W/2-\alpha-1),\qquad
 R_0=t^2+2ux-2cv-2(w+x).
 \label{eq:o23R}
\end{equation}
The coefficient of $W$ is negative, so $R_0>0$.  Since
$u\le L-t$ and $0\le x\le t$,
\begin{equation}
 \begin{aligned}
 R_0
 &=t(t-2c)-2dw+2(u-d)x\\
 &\le t(t-2c)+2(L-t-d)x\\
 &\le\max\{t(t-2c),\,t(2L-2-t)\}.
 \end{aligned}
 \label{eq:o23c}
\end{equation}
If $c\ge1/3$, then $L\le2c$, and both terms in the maximum are
nonpositive.  Thus
\[
 c<1/3,\qquad L=(1+c)/2<d.
\]
The coefficient of $W$ in \eqref{eq:o23R} is now smaller than $-d$,
because $2s+W\le1+c$ and $\alpha>1$.  Put $\delta=t(t-2c)$.
As $u\le L<d$, we obtain
\begin{equation}
 \delta>d(2w+W)\ge0,\qquad 2c<t\le L.
 \label{eq:o23delta}
\end{equation}

At position six,
\[
 (p*p)_6=c^2,\qquad(p*q)_6\ge\alpha x+dv,\qquad
 (q_0*q_0)_6=2uw.
\]
The remaining mixed terms satisfy
\[
 2(q_0*r)_6=2(ur_7+vr_{10}+wr_{11}+xr_{12})\le sW.
\]
Indeed, $r$ is even, $r(0)=0$, and every nonzero position in an odd
group belongs to a two-element inverse pair, so $r_j\le W/2$.
Lemma~\ref{lem:tail} gives
\[
 (q*q)_6\le2uw+sW+W^2/2\le2uw+LW.
\]
Consequently,
\[
 0<c^2-2dt+2(d+u)w+2(d-\alpha)x+LW.
\]
Here $d-\alpha<0$, $u<d$, and $L<d$, so
\eqref{eq:o23delta} makes the right side strictly smaller than
\[
 c^2-2dt+2\delta=c^2-2t(1+c-t).
\]
On $2c<t\le(1+c)/2$, the quantity $2t(1+c-t)$ is strictly greater
than $4cd>c^2$, a contradiction.

For $n=17$, write in addition $q_{\pm7}=y,q_{\pm8}=z$, so
$u+v+w+x+y+z\le L$.  Keep $t=v+w+x$ and
$R_0=t^2+2ux-2cv-2(w+x)$.  Exact convolution on
$R=\{\pm7,\pm8\}$ gives
\begin{equation}
 \begin{aligned}
 0<&\,R_0-2wx-x^2\\
   &-2(\alpha+1-u)y-2(\alpha+d-2u)z.
 \end{aligned}
 \label{eq:o17R}
\end{equation}
Both parentheses are positive: $u\le L<1<\alpha$ and
$2u\le1+\min(c,d)\le1+d<\alpha+d$.  Thus $R_0>0$, and
\eqref{eq:o23c} again yields $c<1/3$ and $L=(1+c)/2<d$.
It follows from \eqref{eq:o17R} that
\[
 \delta=t(t-2c)>2dw+2(\alpha+1-u)y,\qquad 2c<t\le L.
\]
At position six the exact identities are
\[
 (q*q)_6=2uw+2wx+2(u+v)y,\qquad
 (p*q)_6=\alpha x+dv+z,\qquad(p*p)_6=c^2.
\]
Hence
\[
 0<c^2-2dt+2(d+u+x)w+2(d-\alpha)x-2z+2(u+v)y.
\]
Use $u+x\le L<d$, $u+v<1$, and $\alpha+1-u>1$, and discard the
nonpositive $x,z$ terms.  The right side is at most
\[
 c^2-2dt+4dw+4(\alpha+1-u)y
 <c^2-2dt+2\delta
 =c^2-2t(1+c-t)<0.
\]
This is the required contradiction.
\end{proof}

\begin{lemma}\label{lem:o17}
The support $\cP_{14}$ is infeasible in $C_{17}$.
\end{lemma}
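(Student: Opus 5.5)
The plan is to rerun the complementary-sum argument of Theorem~\ref{thm:P14-generic} in $C_{17}$, but with exact convolution identities in place of transport estimates wherever wraparound interferes. In $C_{17}$ one has $8\equiv-9$ and $12\equiv-5$. Hence
\[
 P+P=\{0,\pm1,\pm2,\pm3,\pm4,\pm5,\pm8\},\qquad R=G\setminus(P+P)=\{\pm6,\pm7\}.
\]
The transports $8+1$, $8+4$ and $5+4$ used before now fall back into $P+P$. As a result, the boundary positions $\pm5$ and $\pm8$ enter $R$ only through a single mode $\pm1$, with weight $C$ instead of $C+D$. Because only eight orbit variables remain, I would parametrize everything explicitly:
\[
 p_0=A,\quad p_{\pm1}=C,\quad p_{\pm4}=D,\quad q_{\pm2}=u,\ q_{\pm3}=v,\ q_{\pm5}=w,\ q_{\pm6}=x,\ q_{\pm7}=y,\ q_{\pm8}=z.
\]
The constraints available from earlier results are:
\[
 A>C+D \ \text{(Lemma~\ref{lem:A-CD})},\qquad u+v+w+x+y+z\le\tfrac12\bigl(C+D+\min(C,D)\bigr)\ \text{(Lemma~\ref{lem:strong-mass})}.
\]

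First, write $(a*a)_6$ and $(a*a)_7$ exactly, as was done for order $17$ in Lemma~\ref{lem:odd23}, and add them to obtain $\sum_R a*a>0$. The $p*q$ part should come out as roughly $2(A+C+D)(x+y)$ plus $2Du+2Dv+2Cw+2Cz$, with the exact coefficients to be checked in the computation. The $q*q$ part is a short explicit quadratic with cross terms such as $uz$ (since $-2+8=6$), $vv$, $uw$ and $wz$ (since $-5-8\equiv4$ fails, while $5-8\equiv-3$ is not in $R$), and so on. Normalize by $S=C+D$, set $c=C/S$, $d=D/S$, and replace $A/S>1$ by $1$, which only weakens the inequality. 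As in the proof of Theorem~\ref{thm:P14-generic}, the $x,y$ terms carry coefficient at least $2(1+\ldots)$ against quadratic gains of size at most $2L<2$, with $L=(1+\min(c,d))/2$. So they can be discarded or absorbed, leaving a quadratic inequality $R_0>0$ in $u,v,w,z$ of the form
\[
 R_0=Q(u,v,w,z)-2d(u+v)-2c(w+z),
\]
where $Q$ is a small quadratic form.

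Second, extract sign information from $R_0>0$ under the mass bound, by the same maximization trick as in \eqref{eq:o23c}. I expect this to force $c$ to be small when $c<d$, and $d<1/2$ together with $C>2D$ when $c\ge d$, paralleling \eqref{eq:P14-consequences}. I would then split into the cases $C\ge D$ and $D>C$, exactly as in Theorem~\ref{thm:P14-small}. In the case $D>C$, the weight $2C$ at $\pm8$ and $\pm5$ is no longer available, so the key inequality is $Cw$-type: mass at $\pm5$ and $\pm8$ is cheap. I would close this case with an elementary maximization analogous to \eqref{eq:smallP14-max}, treating $b=2(u+v)$ and $z'=2(w+z)$ as the two free masses.

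Third, in the case $C>D$, bring in the exact identity at position $3$, namely
\[
 (p*p)_3=2CD,\qquad (p*q)_3=Av+Cu+D(q_1+q_7)\cdots,
\]
with all terms written out in $C_{17}$. This is to be combined with the direction inequality from Lemma~\ref{lem:algebra}, as at the end of Theorem~\ref{thm:P14-generic}. The $q*q$ tail at position $3$ is now explicit rather than bounded through Lemma~\ref{lem:tail}.

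The main obstacle is that the losses at $\pm5$ and $\pm8$ break the clean bound $\sum_Rp*q\ge(C+D)M-CB$. This makes the derivation of $M<2C$ and $W<B$ in \eqref{eq:P14-consequences} fragile. If the combined $R$-sum and position-$3$ argument does not close, my first fallback is to add a multiple of the exact inequality $(a*a)_5>0$, or to test $A>C+D$ in the strengthened form coming from positive definiteness of $(a_{i-j})$ against a vector such as $e_0-e_1-e_4+e_5$, which picks up the coefficient $-w$. This is an elimination in the spirit of \eqref{eq:n16-eliminate}, aimed at killing the $w$ and $z$ variables before the final quadratic estimate.
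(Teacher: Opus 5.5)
Your setup agrees with the paper's: the same orbit variables, exact convolution on $\{\pm6,\pm7\}$ and at position $3$, normalization by $S=C+D$, and replacing $A/S$ by $1$. The closing steps, however, have genuine gaps.

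\textbf{The branch $D>C$.} This branch cannot be finished by maximizing the complementary sum alone in the style of \eqref{eq:smallP14-max}. If only $q_{\pm5}=w$ is nonzero, then $\sum_Ra*a=2w(w-2C)$, which is positive for $2C<w\le C+D/2$. For instance $C=1$, $D=4$, $A=6$, $w=5/2$ satisfies $A>C+D$, Lemma~\ref{lem:strong-mass}, and positivity on $\{\pm6,\pm7\}$. So this branch needs another coordinate, namely position $5$. It is also not a separate case at all. Since $4^2\equiv-1\pmod{17}$, multiplication by $4$ has the following effects:
\begin{itemize}
\item it exchanges the pairs $\pm1$ and $\pm4$, so $C\leftrightarrow D$;
\item it sends $(u,v,w,x,y,z)$ to $(z,w,v,y,x,u)$;
\item it preserves $\{\pm6,\pm7\}$;
\item it turns position $3$ into position $5$.
\end{itemize}
The paper therefore assumes $C\ge D$ from the start. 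The case $D>C$ is exactly as hard, not easier.

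\textbf{The branch $C\ge D$.} Here you plan to combine the position-$3$ identity with the direction inequality \eqref{eq:P14-direction} from Lemma~\ref{lem:algebra}. That lemma's hypothesis \eqref{eq:P14-normalized} comes from the transport bound in \eqref{eq:P14-complement}. That bound is false in $C_{17}$, because the positions $\pm5,\pm8$ enter only with weight $C$ and lose $2D(w+z)$. So neither \eqref{eq:P14-consequences} nor \eqref{eq:P14-direction} is available. You acknowledge this and offer only fallbacks.

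\textbf{The missing idea: a direct algebraic certificate.} Let $R_{67}>0$ and $F_3>0$ be the normalized forms (with $A/S\to1$) of $(a*a)_6+(a*a)_7$ and $(a*a)_3/2$. What lets $R_{67}$ be positive is the term $v^2$ from $3+3=6$: with $v$ alone, $R_{67}=v(v-2d)>0$ is possible when $d<1/3$. The paper multiplies $F_3$ by $v$, so its $-v^2$ cancels this term. With $L=(1+d)/2$, $Q=w^2+2uw+4uz+2vz+Lvw$ and $K=2du+2dLv+2c(w+z)$, one gets
\[
 R_{67}+vF_3=Q-K-cuv+vw(u+z-L)+x[-4+2w+v^2+vz]+y[-4+(2-d)v+vy/2].
\]
The brackets are negative and $u+z\le L$. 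Finally, $Q\le K$ follows from an explicit identity writing $tK-LQ$, with $t=u+v+w+z\le L$, as a combination of squares and monomials with positive coefficients. This forces $R_{67}+vF_3\le0$, a contradiction.

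This also shows you should not discard the $x,y$ terms of the complementary sum at the outset. They are exactly what absorbs the positive terms $v^2x+vxz+vy^2/2$ contributed by $vF_3$.
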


\begin{proof}
Take $p_{\pm1}=C,p_{\pm4}=D$.
Lemma~\ref{lem:A-CD} gives $A>S$.  Multiplication by the unit four
exchanges these inverse pairs, so assume $C\ge D$.  Divide the
spectrum by $S=C+D$ and put
\[
 c=C/S,\quad d=D/S,\quad\alpha=A/S>1,\qquad
 c+d=1,\quad c\ge d>0,\quad L=(1+d)/2\le3/4.
\]
For the normalized negative part write
\[
 q_{\pm2}=u,\quad q_{\pm3}=v,\quad q_{\pm5}=w,\quad
 q_{\pm6}=x,\quad q_{\pm7}=y,\quad q_{\pm8}=z.
\]
Then
\begin{equation}
 u+v+w+x+y+z\le L.
 \label{eq:o17mass}
\end{equation}
Expand $(a*a)_6+(a*a)_7$ and $(a*a)_3/2$, and replace
$\alpha>1$ by $1$, which can only increase both expressions.
The following inequalities are therefore necessary:
\begin{align}
 R={}&v^2+w^2+2uw+4uz+2vz+2wx+2vy\notag\\
    &-2d(u+v)-2c(w+z)-4(x+y)>0,\label{eq:o17a}\\
 F={}&cd-v-cu-dy+uw+vx+wz+xz+y^2/2>0.\label{eq:o17b}
\end{align}
Put
\[
 t=u+v+w+z,\qquad K=2du+2dLv+2c(w+z),
\]
and
\[
 Q=w^2+2uw+4uz+2vz+Lvw.
\]
Since $c=1-d$ and $2L=1+d$, direct expansion gives
\begin{equation}
 \begin{aligned}
 tK-LQ={}&2d(u-z/2)^2+(2c-d/2)z^2\\
 &+2dLv^2+(2c-L)w^2+2d(1+L)uv\\
 &+2(1-L)uw+(2dL+2c-L^2)vw\\
 &+c^2vz+4cwz.
 \end{aligned}
 \label{eq:o17quad}
\end{equation}
Every coefficient on the right is positive, since
$c\ge1/2$, $d\le1/2$, and $L\le3/4$.  Because $t\le L$ and
$K\ge0$, we obtain $Q\le(t/L)K\le K$.

On the other hand, \eqref{eq:o17a}--\eqref{eq:o17b} give the identity
\[
 \begin{aligned}
 R+vF={}&Q-K-cuv+vw(u+z-L)\\
 &+x[-4+2w+v^2+vz]\\
 &+y[-4+(2-d)v+vy/2].
 \end{aligned}
\]
Here $u+z\le L$, and the two bracketed coefficients are at most
$-4+2L+L^2<0$ and $-4+2L+L^2/2<0$, respectively.
Thus $R+vF\le0$, contrary to $R>0$, $F>0$, and $v\ge0$.
\end{proof}

\begin{lemma}\label{lem:odd15}
The support $\cP_{15}$ is infeasible in every odd order $n\ge17$.
\end{lemma}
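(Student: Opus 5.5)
The plan is to use the complementary-sum method behind Lemma~\ref{lem:weak}. Put $p_{\pm1}=C$ and $p_{\pm5}=D$. Then
\[
 P+P=\{0,\pm1,\pm2,\pm4,\pm5,\pm6,\pm10\},
\]
which has thirteen elements and no wraparound for odd $n\ge17$. One should still check $n=17,19,21$ separately: for example $10\equiv-7$ when $n=17$, and $\pm6,\pm10$ become close to the other end of the interval. The first step is Lemma~\ref{lem:A-CD} with $(r,s)=(1,5)$. Its hypotheses hold because $0,1,5,6$ are distinct and $\pm4,\pm6\notin P$, so $A>S$. Hence negative mass already in $R=C_n\setminus(P+P)$ is kept in $R$ by the zero mode with weight $A>S$.

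Next I would tabulate the boundary positions $\pm2,\pm4,\pm6,\pm10$ and find, for each, the modes of $P$ that carry it into $R$. For the positive representatives:
\begin{itemize}
\item $2$ goes to $3$ via $+1$ and to $7$ via $+5$, so it has weight $S$.
\item $4$ goes to $3$ via $-1$ and to $9$ via $+5$, so it has weight $S$.
\item $6$ goes to $7$ via $+1$ and to $11$ via $+5$, so it has weight $S$.
\item $10$ goes to $9$ and $11$ via $\pm1$, and to $15$ via $+5$, so it has weight $2C+D\ge S$.
\end{itemize}
For large $n$ every boundary point therefore has weight at least $S$. Lemma~\ref{lem:weak} then applies with $E=\varnothing$ (take $\sigma=S$), which gives $\sum_R a*a\le M^2-2SM<0$ because $M<2S$. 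This settles every odd $n$ for which all these targets genuinely lie outside $P+P$. A target fails only when some target $t\in\{3,7,9,11,15\}$ is congruent modulo $n$ to an element of $P+P$, which requires $n$ to divide one of the differences, all of size at most $25$. So I would list the finitely many odd $n\in\{17,\dots,25\}$ where a transport is lost.

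The main obstacle is these residual orders; $n=17$ is presumably the hardest. Examples are $15\equiv-2$ and $11\equiv-6$ at $n=17$, and $15\equiv-4$, $11\equiv-8$ at $n=19$. For each such order I would first try to recover full weight by using alternative shifts, such as $-5$ or the second mode $\pm1$. Only when that fails at a position, say $\pm10$ retaining weight $2C$ or $\pm6$ retaining weight only $C$, would I put that position into $E$. Two conditions must then be verified: $(E+E)\cap R=\varnothing$, and $\sigma\ge\ell$. For instance, $\{\pm6\}+\{\pm6\}=\{0,\pm12\}$, and at $n=17$ we have $12\equiv-5\in P$. If Lemma~\ref{lem:weak} cannot be closed, for example if $\pm10$ lands somewhere with $E+E$ meeting $R$, then at $n=17$ I would fall back on the exact-coordinate method of Lemma~\ref{lem:o17}. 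That means normalizing by $S$, using the mass bound $\sum q\le L=(1+\ell)/2$ from Lemma~\ref{lem:strong-mass}, writing out $(a*a)_k$ exactly at one or two positions of $R$, and forming a nonnegative combination, as with $R+vF$, whose sign is forced by a sum-of-squares identity. Note that $\cP_{15}$ is related to $\cP_{14}$ only through the unit $5^{-1}$ when $4\cdot5\equiv\pm1$, which happens at $n=19$ and $21$, and those cases are covered by Theorem~\ref{thm:P14-generic}. Thus the genuinely delicate residual case should be only $n=17$ (and possibly $23,25$).
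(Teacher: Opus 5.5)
Your reduction for $n=19,21$ (the unit $4\equiv\pm5^{-1}$ carrying $\cP_{15}$ to $\cP_{14}$, then Theorem~\ref{thm:P14-generic}) is the paper's, and for $n\ge23$ your transport table is the paper's. Your framework does close the two orders you left open. At $n=23$ every target lies in $R$; the only wrap is $15\equiv-8$, which is harmless. At $n=25$ only $15\equiv-10$ is lost, so you take $E=\{\pm10\}$ with $\sigma=\min(S,2C)\ge\ell$, and $E+E=\{0,\pm5\}\subseteq P$. You should have carried out these two checks, but they are routine.

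The genuine gap is $n=17$, and there your primary route provably fails. In $C_{17}$ one has $10\equiv-7$, so $P+P=\{0,\pm1,\pm2,\pm4,\pm5,\pm6,\pm7\}$ and $R=\{\pm3,\pm8\}$. You noticed $11\equiv-6$, but not that the target $7$ is also lost, since $7\equiv-10\in P+P$. Consequently $6+P=\{6,7,5,1,-6\}$ misses $R$ entirely, and $\pm6$ has transport weight $0$, not $C$ as you guessed. Putting $\pm6$ into $E$ would need $\sigma\ge\ell>0$, and shrinking $R$ only lowers weights. So Lemma~\ref{lem:weak} cannot be used in this order. Your fallback, an unspecified exact-coordinate computation closed by some sum-of-squares identity, is a plan rather than a proof.

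The missing idea is one more unit change. In $C_{17}$, multiplication by $3$ sends $\{0,\pm1,\pm5\}$ to $\{0,\pm3,\pm15\}=\{0,\pm2,\pm3\}=\cP_{23}$. That support is excluded by Lemma~\ref{lem:odd23}, whose $n=17$ case is exactly the exact computation you would otherwise have to invent: it works on $R=\{\pm7,\pm8\}$ together with position six. Without this reduction or an explicit substitute, the lemma remains unproved at $n=17$.
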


\begin{proof}
In $C_{17}$, multiplication by three sends $\cP_{15}$ to
$\cP_{23}$, excluded by Lemma~\ref{lem:odd23}.
In orders $19$ and $21$, multiplication by four sends it to
$\cP_{14}$, excluded by Theorem~\ref{thm:P14-generic}.

For the other odd orders put $p_{\pm1}=C,p_{\pm5}=D$.
Lemma~\ref{lem:A-CD} gives $A>S$, and
\[
 P+P=\{0,\pm1,\pm2,\pm4,\pm5,\pm6,\pm10\}.
\]
When $n=23$ or $n\ge27$ is odd, the boundary transports
\[
 \begin{array}{c|cc}
 k&k\pm1&k+5\\ \hline
 2&3&7\\
 4&3&9\\
 6&7&11\\
 10&9&15
 \end{array}
\]
all end in $R=C_n\setminus(P+P)$.  At order $23$, the last target
$15$ is $-8$, which is still in $R$; no target meets the sumset at
orders at least $27$.  Thus every boundary mass has weight at least
$S$, and mass in $R$ has the zero-mode weight $A>S$.
It follows that $\sum_Ra*a\le M^2-2SM<0$.

At order $25$, the only exception is $E=\{\pm10\}$, whose two
shifts by $\pm1$ still enter $R$ with total weight $2C$.
Since $E+E=\{0,\pm20\}=\{0,\pm5\}\subseteq P$,
Lemma~\ref{lem:weak} applies with $\sigma=\min(S,2C)\ge\ell$.
\end{proof}

\section{Proof of the main theorem}

\begin{proof}[Proof of Theorem~\ref{thm:main}]
Corollary~\ref{cor:n18} handles every even $n\ge18$.  In order $16$,
Proposition~\ref{prop:six-families} reduces the possible five-point
supports to $\cP_{12}$, $\cP_{14}$, and $\cP_{23}$, as explained at
the start of Section~\ref{sec:sixteen}.  These are excluded by
Theorems~\ref{thm:P12}, \ref{thm:P14-small}, and \ref{thm:n16-P23}.
Theorem~\ref{thm:no-four-even} excludes four or fewer positive
coefficients.  This proves the even-order lower bound.

If $n\ge17$ is odd, the number of positive coefficients is odd.
Lemma~\ref{lem:three-positive} excludes three or fewer.
For five positive coefficients, Proposition~\ref{prop:odd} gives all
possible supports.  Lemmas~\ref{lem:odd12}, \ref{lem:odd13},
\ref{lem:odd23}, \ref{lem:o17}, and \ref{lem:odd15}, together with
Theorem~\ref{thm:P14-generic}, exclude every one of them.  Thus
$\kappa_2(n)\ge7$ in odd order, and in particular
$\kappa_2(n)\ge6$ for all $n\ge16$.

For the reverse inequality in order $16$, let
\[
 P=\{0,8,\pm1,\pm3\}\subseteq\Z/16\Z.
\]
Then
\[
 P+P=\{0,\pm1,\pm2,\pm3,\pm4,\pm5,\pm6,\pm7,8\}
      =\Z/16\Z.
\]
Lemma~\ref{lem:additive-basis} gives $\kappa_2(16)\le6$, and equality
follows.

For the three odd orders, the sets
\[
 \begin{array}{c|c}
 n&P\\ \hline
 17&\{0,\pm1,\pm2,\pm6\}\\
 19&\{0,\pm1,\pm2,\pm7\}\\
 21&\{0,\pm1,\pm2,\pm8\}
 \end{array}
\]
are two-fold additive bases.  The sums of $0,\pm1,\pm2$ cover
$0,1,2,3,4$ up to sign; adding these elements to the last positive
frequency $k$ covers $k-2,\ldots,k+2$.  For $n=21$, the remaining
representative $5$ is $(-8)+(-8)$ modulo $21$.
Lemma~\ref{lem:additive-basis} gives the upper bound seven in all
three orders.  Explicitly, one may take $a_0=1$, $a_j=1/12$ on
$P\setminus\{0\}$, and zero elsewhere: then
$n\mathcal F^{-1}a\ge1/2$ and $a*a>0$.
Finally, Theorem~\ref{thm:small-table} gives $\kappa_2(15)=5$,
showing that the threshold $16$ cannot be decreased.
\end{proof}

\section{Conclusion}

For entrywise positive real symmetric circulant matrices, positive
definiteness of the entrywise square forces at least six positive
eigenvalues in every order $n\ge16$, and at least seven in every odd
order $n\ge17$.  The bounds are attained at $16$ and at $17,19,21$,
respectively.  The exact low-order values, including
$\kappa_2(15)=5$, show that $16$ is the least integer $N$ for which
$\kappa_2(n)\ge6$ holds for every $n\ge N$.

\end{document}